\documentclass[12pt]{amsart}
\usepackage{amsfonts,amssymb,amsmath,amscd,amsthm,amstext,bm}
\usepackage{xcolor}
\usepackage[normalem]{ulem}
\usepackage{enumerate}
\usepackage{fancyhdr}
\usepackage{geometry}
\usepackage{graphicx}
\usepackage{epstopdf}
\usepackage{mathrsfs}
\usepackage{txfonts}
\usepackage{tikz}
\usetikzlibrary{arrows.meta, positioning, decorations.pathreplacing}
\usepackage{url}
\usepackage{comment}
\usepackage[colorlinks, citecolor=blue, linkcolor=blue, urlcolor=blue, pagebackref, hypertexnames=false]{hyperref}
\allowdisplaybreaks
\numberwithin{equation}{section}
\newtheorem{theorem}{Theorem}[section]
\newtheorem{proposition}[theorem]{Proposition}
\newtheorem{lemma}[theorem]{Lemma}
\newtheorem{corollary}[theorem]{Corollary}
\theoremstyle{definition}
\newtheorem{definition}[theorem]{Definition}
\newtheorem{example}[theorem]{Example}
\theoremstyle{remark}
\newtheorem{remark}[theorem]{Remark}

\newcommand{\bbR}{\mathbb R}
\newcommand{\bbC}{\mathbb C}

\newcommand{\bbZ}{\mathbb Z}

\newcommand{\calC}{\mathcal C}
\newcommand{\calD}{\mathcal D}

\newcommand{\calH}{\mathcal H}
\newcommand{\calK}{\mathcal K}
\newcommand{\calL}{\mathcal L}
\newcommand{\calM}{\mathcal M}

\newcommand{\calS}{\mathcal S}

\newcommand{\calW}{\mathcal W}

\newcommand{\supp}{\operatorname{supp}}

\newcommand{\Fkap}{\mathcal F_{\kappa}}

\newcommand{\CH}{\operatorname{CH}}

\newcommand{\norm}[1]{\left\|#1\right\|}

\newcommand{\ip}[2]{\left\langle #1,#2\right\rangle}

\renewcommand{\epsilon}{\varepsilon}

\title[Chamber lifting and non-radial Dunkl multipliers]
{Chamber lifting and non-radial Dunkl multipliers\\[15pt] {\rm \tiny In Honor of Ngai-Ching Wong}\\[15pt] }

\author{Der-Chen Chang}
 \address{Der-Chen Chang, Department of Mathematics and Statistics, Georgetown University, Washington, DC 20057, USA; \,
Graduate Institute of Business Administration, College of Management, Fu Jen Catholic University, New Taipei City 242,
Taiwan, ROC}
\email{chang@georgetown.edu}

\author{Ji Li}
\address{Ji Li, School of Mathematical and Physical Sciences, Macquarie University, NSW 2109, Australia}
\email{ji.li@mq.edu.au}

\author{Chaojie Wen\textsuperscript{*}}
\address{Chaojie Wen, Department of Mathematics, Sun Yat-sen University, Guangzhou, 510275, P.R.~China}
\email{wenchj@mail2.sysu.edu.cn}
\thanks{\textsuperscript{*}Corresponding author.}

\author{Liangchuan Wu}
\address{ Liangchuan Wu, School of Mathematical Science, Anhui University, Hefei, 230601,  P.R.~China}
\email{wuliangchuan@ahu.edu.cn}

\subjclass[2020]{Primary 42B15, 42B25; Secondary 33C52, 43A32, 47G10} 

\keywords{Dunkl multipliers, H\"ormander multiplier theorem, Walsh--Mihlin condition, chamber lifting, finite reflection groups, Calder\'on--Zygmund
theory}

\begin{document}

\begin{abstract}
We study non-radial Dunkl multipliers via chamber lifting. For an arbitrary finite reflection group $G$, the chamber lifting
records all reflected values of a function and conjugates a multiplier into a finite matrix-valued operator on the chamber.
If the dyadic matrix entries admit off-diagonal kernels satisfying the chamber $L^2$ H\"ormander condition $\CH^2_{s,\eta}$
with $s>N_\kappa/2$, then the original multiplier is bounded on $L^p(\bbR^N,d\omega)$ for every $1<p<\infty$.

For the product reflection group $\Sigma_N=A_1^N\simeq\bbZ_2^N$ this chamber condition follows from scalar Sobolev conditions on the Walsh pieces of the multiplier. 
The tensor product of the one-dimensional even/odd Dunkl decompositions, together with the finite Walsh transform, identifies each lifted matrix entry with a Hankel multiplier acting between parity components. 
Wall separation and a scale-invariant $L^2$ Sobolev condition of order $\sigma>N_\kappa/2$ therefore imply $L^p$ boundedness, for
all $1<p<\infty$, for a genuinely non-radial class of symbols. 
The order $N_\kappa/2$ is forced already by the rank-one Bessel transform.

The same chamber theorem also applies to non-product examples once the matrix kernel condition is known, including the dihedral groups $I_2(q)$ and hence $A_2\simeq I_2(3)$ and $B_2\simeq I_2(4)$. 
The scalar Walsh--Sobolev verification is specific to $A_1^N$. In non-product groups such as $A_2$, $A_{N-1}$, and $B_N$, the product parity calculus is absent, so a scalar
theorem of the same form would require additional transform estimates.
\end{abstract}

\maketitle


\section{Introduction}
\label{s:1}

\subsection{Non-radial multipliers and the chamber reduction}

Let $R$ be a finite root system in $\bbR^N$, let $G$ be the finite reflection group generated by $R$, and let $\kappa\ge0$ be a $G$-invariant multiplicity function; 
these are the standard data for the rational Dunkl transform associated with Dunkl operators \cite{Dunkl,Dunkl1991,deJeu1993,O1993,Rosler2003,Anker2017}. We write $d\omega$ for the associated Dunkl measure, namely
$$
    d\omega(x)=h_\kappa(x)^2\,dx, \quad   N_\kappa=N+2\gamma_\kappa, \quad   \text{and }\quad  \gamma_\kappa=\sum_{\alpha\in R_+}\kappa(\alpha).
$$
For general background on the Dunkl transform, the intertwining operator, the Dunkl kernel, and related
orthogonal expansions, see also \cite{DX2014,R1998,R1999,RJ2002}. For $m\in L^\infty(\bbR^N)$, the Dunkl multiplier $T_m$ is defined on $L^2(d\omega)$ by $\Fkap(T_mf)(\xi)=m(\xi)\Fkap f(\xi)$. 
Our aim is to prove, from explicit conditions on a non-radial symbol $m$, estimates of the form
$$
    \|T_m f\|_{L^p(\bbR^N,d\omega)}\le C_p\|f\|_{L^p(\bbR^N,d\omega)},\quad  1<p<\infty.
$$
The natural $L^2$ H\"ormander scale is $s>N_\kappa/2$. The usual argument fails at the kernel step: for non-radial kernels
there is no translation formula strong enough to run the Calder\'on--Zygmund proof directly on the full space. This explains
why several existing multiplier theorems impose radiality, work with radial functions, or use semigroup representations. The translation and convolution theory in the Dunkl setting is developed in \cite{T2002,TX2005}, while
positive radial translation phenomena are closely related to \cite{R2003Radial}. Here we avoid estimates for general
Dunkl translations. Instead we pass to a fundamental chamber and retain all reflected values of the function as components
of a finite vector.

\subsection{The chamber lifting}

Let $G$ be a finite reflection group and let $\calC$ be an open fundamental chamber. 
We use the chamber lifting developed for Dunkl Calder\'on-type commutators in \cite{HLLSW-Calderon}:
$$
    Uf(x)=\bigl(f(\sigma x)\bigr)_{\sigma\in G} \quad \text{for}\quad x\in\calC.
$$
Up to the harmless finite counting measure, $U$ is an isometric identification of $L^p(\bbR^N,d\omega)$ with $L^p(\calC,d\omega;\bbC^G)$. Thus a scalar multiplier becomes
$$
    \mathbb T_m^G=UT_mU^{-1},
$$
a finite matrix operator on one chamber. The chamber lift imposes no symmetry on $f$; it writes the full-space question as a vector-valued singular integral estimate on $\calC$.

This gives the first main theorem. The notation $\CH^2_{s,\eta}(\calC)$ denotes the chamber $L^2$ H\"ormander condition obtained from Definition \ref{d:3} by replacing the product chamber by $\calC$, the product index set by $G$, and the chamber volume by $V_{\calC}(y,r)=\omega(B(y,r)\cap\calC).$

\begin{theorem} \label{t:1} 
Let $G$ be a finite reflection group on $\bbR^N$, let $m\in L^\infty(\bbR^N)$, and let $\mathbb T_m^G=UT_mU^{-1}$. 
Assume that the dyadic entries of $\mathbb T_m^G$ admit off-diagonal kernels and that $\mathbb T_m^G\in\CH^2_{s,\eta}(\calC)$ for some $s>N_\kappa/2$ and $0<\eta\le1$. 
Then, for every $1<p<\infty$,
$$
    \|T_mf\|_{L^p(\bbR^N,d\omega)}  \le C_{p,G,\kappa,s,\eta}  \bigl(\|m\|_\infty+\|\mathbb T_m^G\|_{\CH^2_{s,\eta}(\calC)}\bigr)   \|f\|_{L^p(\bbR^N,d\omega)}.
$$
No $G$-invariance, radiality, or chamber-constancy condition is imposed on $f$.
\end{theorem}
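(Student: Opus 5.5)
The plan is to reduce the statement to a vector-valued Calderón--Zygmund theorem on the space of homogeneous type $(\calC,|\cdot|,\omega|_{\calC})$. First I check that $(\calC,d\omega)$ is doubling with $V_{\calC}(y,r)\simeq\omega(B(y,r))$. This holds because $\calC$ is a convex cone and $h_\kappa^2$ is homogeneous of degree $2\gamma_\kappa$. The same comparison gives the upper dimension $N_\kappa$:
$$V_{\calC}(y,\lambda r)\lesssim\lambda^{N_\kappa}V_{\calC}(y,r),\qquad \lambda\ge1.$$
Since $\|Uf\|_{L^p(\calC;\bbC^G)}\simeq\|f\|_{L^p(\bbR^N,d\omega)}$ with constants depending only on $|G|$, it suffices to bound $\mathbb T_m^G$ on $L^p(\calC,d\omega;\bbC^G)$.

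\textbf{$L^2$ and the dyadic splitting.} By Plancherel for $\Fkap$, $\|\mathbb T_m^G\|_{L^2\to L^2}\lesssim\|m\|_\infty$. Next I take a Littlewood--Paley partition $1=\sum_j\psi(2^{-j}\xi)$ with $\psi$ radial. This splits $m=\sum_j m_j$ with $m_j=m\,\psi(2^{-j}\cdot)$, and each $\mathbb T_{m_j}^G$ is a $G\times G$ matrix of operators. By hypothesis each entry $(\sigma,\tau)$ has an off-diagonal kernel $K^{j}_{\sigma\tau}(x,y)$ on $\calC\times\calC$. The $\CH^2_{s,\eta}(\calC)$ condition, read as in Definition \ref{d:3} with $V_{\calC}$, should give weighted $L^2$ bounds of the form
$$\Bigl(\int_{\calC}|K^j_{\sigma\tau}(x,y)|^2\bigl(1+2^j|x-y|\bigr)^{2s}\,d\omega(x)\Bigr)^{1/2}\lesssim V_{\calC}(y,2^{-j})^{-1/2},$$
together with an $\eta$-Hölder version in $y$.

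\textbf{Hörmander condition for the sum.} I would verify the integral Hörmander condition for $K=\sum_jK^j$: for $|y-y'|\le r$,
$$\int_{\calC\setminus B(y,2r)}|K(x,y)-K(x,y')|\,d\omega(x)\lesssim \|\mathbb T_m^G\|_{\CH^2_{s,\eta}},$$
and the symmetric estimate in $x$. For this I split the sum at $2^j\simeq r^{-1}$.
\begin{itemize}
\item For $2^jr\ge1$, I use the triangle inequality and Cauchy--Schwarz on dyadic annuli $|x-y|\sim2^kr$. Since $V_{\calC}(y,2^kr)/V_{\calC}(y,2^{-j})\lesssim(2^{k+j}r)^{N_\kappa}$ and the weight contributes $(2^{j+k}r)^{-s}$, the annulus sum converges exactly because $s>N_\kappa/2$. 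The result is a bound $\lesssim(2^jr)^{N_\kappa/2-s}$, which is summable over these $j$.
\item For $2^jr<1$, I use the $\eta$-smoothness piece instead, which gives $(2^jr)^{\eta}$, also summable.
\end{itemize}
The adjoint kernels come from the transposed matrix of $\overline{m}$, so the $x$-side estimate follows in the same way.

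\textbf{Conclusion and main obstacle.} The vector-valued Calderón--Zygmund theorem on a space of homogeneous type, with the finite-dimensional $\bbC^G$ (all norms equivalent), then gives weak $(1,1)$. Marcinkiewicz interpolation yields $1<p\le2$, and duality yields $2\le p<\infty$. The constant is $C(\|m\|_\infty+\|\mathbb T_m^G\|_{\CH^2_{s,\eta}})$, and pulling back through $U^{-1}$ proves Theorem \ref{t:1}. The step I expect to be delicate is identifying the assembled kernel with the off-diagonal kernel of $\mathbb T_m^G$. Here I would use that the dyadic entries are in $L^2$ with compactly supported test functions whose supports are disjoint in $\calC$, and that the partial sums converge in the strong operator topology on $L^2$. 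I also have to check that the chamber-boundary geometry does not break the doubling or annulus counting near the walls, which the cone and homogeneity argument above handles.
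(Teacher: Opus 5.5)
Your proposal is correct and follows the paper's proof of Theorem~\ref{t:1}. The common steps are the Plancherel bound on $L^2$, splitting the dyadic sum at $r_j\simeq|y-y'|$, using the $\eta$-difference estimate for $r_j\gtrsim|y-y'|$ and Cauchy--Schwarz tail bounds (summable because $s>N_\kappa/2$) for $r_j<|y-y'|$, then a finite-matrix Calder\'on--Zygmund theorem on the doubling chamber and pulling back through $U$. The differences are cosmetic: the paper applies the scalar theorem entry by entry, and the kernel-representation clause and the $x$-side estimates are already part of the hypothesis $\CH^2_{s,\eta}(\calC)$, so you do not need to derive the first or pass to adjoints for the second.
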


This chamber theorem is formulated in terms of matrix kernels. 
It is not, by itself, a scalar smoothness criterion: for a general reflection group there is no product parity calculus which turns a scalar condition on $m$ directly into the matrix estimates above. The scalar verification is an additional product-group argument.

\subsection{The product scalar theorem}

In the product case $\Sigma_N=A_1^N\simeq\bbZ_2^N$ and $\calC=(0,\infty)^N$, the lift is $Uf(x)=\bigl(f(\varepsilon x)\bigr)_{\varepsilon\in\{\pm1\}^N}$. 
The one-dimensional even/odd Dunkl decomposition tensorizes; this is the product version of the Bessel and
$h$-harmonic structure discussed, for instance, in \cite{Rosler2003,DX2014}. After passing to Walsh parity coordinates, 
the matrix entries are expressed through the Walsh pieces of the scalar symbol. This transform-side identity has no analogue of the same form for most non-product groups.

\begin{theorem} \label{t:2} 
Let $\Sigma_N\simeq\bbZ_2^N$, let $m\in L^\infty(\bbR^N)$, and let $\mathbb T_m=\calW U T_m U^{-1}\calW^{-1}$ be the lifted multiplier in Walsh parity coordinates. 
Assume that the dyadic entries of $\mathbb T_m$ admit off-diagonal kernels and that $\mathbb T_m\in\CH^2_{s,\eta}$ for some $s>N_\kappa/2$ and $0<\eta\le1$. 
Then, for every $1<p<\infty$,
$$
    \|T_m f\|_{L^p(\bbR^N,d\omega)}   \le C_{p,N,\kappa,s,\eta}  \left(\|m\|_\infty+\|\mathbb T_m\|_{\CH^2_{s,\eta}}\right)   \|f\|_{L^p(\bbR^N,d\omega)}.
$$
\end{theorem}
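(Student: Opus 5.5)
Theorem \ref{t:2} will follow from Theorem \ref{t:1} applied with $G=\Sigma_N$ and $\calC=(0,\infty)^N$, once the Walsh conjugation is shown to be harmless. The key structural fact is that the finite Walsh transform $\calW$ acts only on the fibre $\bbC^{\{\pm1\}^N}$. It does not act on the chamber variable $x\in\calC$. Concretely, $(\calW F)_\epsilon(x)=2^{-N/2}\sum_{\delta}w_\epsilon(\delta)F_\delta(x)$, with characters $w_\epsilon(\delta)=\prod_j\delta_j^{(1-\epsilon_j)/2}$. This is a fixed orthogonal $2^N\times2^N$ matrix. Hence
$$
    \mathbb T_m=\calW\,\mathbb T_m^{\Sigma_N}\calW^{-1},
$$
and entrywise each $(\mathbb T_m)_{\epsilon\epsilon'}$ is a finite linear combination, with coefficients $\pm2^{-N}$, of the entries $(\mathbb T_m^{\Sigma_N})_{\delta\delta'}$. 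The converse holds as well.

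First, I will check that the hypotheses transfer. The dyadic pieces are obtained by inserting a Littlewood--Paley cutoff $\psi(2^{-j}|\xi|)$ into the symbol. This cutoff is radial, so it commutes with both $U$ and $\calW$. Therefore the dyadic entries of $\mathbb T^{\Sigma_N}_m$ are the same finite combinations of the dyadic entries of $\mathbb T_m$, and their off-diagonal kernels are the corresponding combinations of kernels. The chamber $L^2$ H\"ormander norm of Definition \ref{d:3} is built from weighted $L^2$ integrals of the matrix kernels over annuli in $\calC$, measured against $V_{\calC}(y,r)$. Changing the fibre basis by a unitary matrix only changes these norms by a constant $C_N$. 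Under any reasonable matrix norm (Hilbert--Schmidt, or the supremum of the entries), we get
$$
    \|\mathbb T_m^{\Sigma_N}\|_{\CH^2_{s,\eta}(\calC)}\le C_N\|\mathbb T_m\|_{\CH^2_{s,\eta}}.
$$
Definition \ref{d:3} is literally the product-chamber instance of $\CH^2_{s,\eta}(\calC)$, so no reinterpretation of the condition is needed.

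Second, I will invoke Theorem \ref{t:1}. It gives $\|T_mf\|_{L^p(d\omega)}\le C(\|m\|_\infty+\|\mathbb T^{\Sigma_N}_m\|_{\CH^2_{s,\eta}(\calC)})\|f\|_{L^p(d\omega)}$. Combining this with the previous bound yields the stated estimate, with constants depending on $N,\kappa,s,\eta,p$. The orthogonality of $\calW$ also makes the $L^p(\calC;\bbC^{2^N})$ norms equivalent before and after conjugation, up to constants depending on $N$. So even if one prefers to run the proof directly, nothing is lost.

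The only point I expect to need care is verifying that the ``admit off-diagonal kernels'' and dyadic-decomposition conventions in Definition \ref{d:3} are stated in a basis-independent way. If the definition singles out particular entries, for instance by treating diagonal and off-diagonal parity blocks differently, I will instead rerun the proof of Theorem \ref{t:1} directly with $\mathbb T_m$. That proof consists of $L^2$ boundedness from $\|m\|_\infty$, a Calder\'on--Zygmund decomposition on the space of homogeneous type $(\calC,|\cdot|,\omega)$, a weak $(1,1)$ bound via the H\"ormander condition with $s>N_\kappa/2$, interpolation, and duality. Each of these steps is manifestly invariant under a constant unitary change of fibre basis.
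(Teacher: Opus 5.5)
Your argument is correct, but it is organized differently from the paper.

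\textbf{The paper's route.} The paper does not deduce Theorem~\ref{t:2} from Theorem~\ref{t:1}. In Section~\ref{s:8} it applies the chamber Calder\'on--Zygmund machinery directly to the Walsh-coordinate matrix. It sets $F=\calW Uf$ and uses Lemmas~\ref{l:3} and \ref{l:4} only to compare $L^p$ norms of functions. It gets the $L^2$ bound from Plancherel and invokes Proposition~\ref{p:3} (Proposition~\ref{p:2} plus Theorem~\ref{t:6}) for $\mathbb T_m$ itself. Theorem~\ref{t:1} is proved afterwards, in Section~\ref{s:11}, by the same scale-splitting argument in the sign-value basis and without using Theorem~\ref{t:2}. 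So your reduction is not circular.

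\textbf{Your route.} You move the change of basis to the kernel level. You therefore have to check that the four estimates of Definition~\ref{d:3}, together with the kernel-representation clause, pass from the entries $K_j^{\alpha\beta}$ to the sign-value entries. This does hold. Each sign-value entry is a fixed combination, with coefficients $\pm2^{-N}$, of Walsh entries. This is true whether $\calW$ is normalized by $2^{-N}$ as in Definition~\ref{d:2} or by $2^{-N/2}$ as in your formula. Minkowski's inequality then gives the bound with a factor $C_N$.

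The dyadic pieces also match. Since $\psi_j(|\xi|)$ is sign-invariant, $(\psi_jm)_\theta=\psi_jm_\theta$, so the entries \eqref{e:25} are exactly the Walsh entries of the lift of $T_{\psi_jm}$. This is the precise statement you need, rather than a literal commutation of the frequency cutoff with $U$ and $\calW$.

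\textbf{Comparison.} The paper's route needs no transfer of the kernel condition at all. Proposition~\ref{p:3} can be applied in whatever fibre basis the entries are given, and the condition stays in the coordinates in which Section~\ref{s:9} verifies it. Your route makes Theorem~\ref{t:2} a formal corollary of the general-group theorem. It shows explicitly that the Walsh basis plays no role in the Calder\'on--Zygmund step and matters only for the scalar verification. Your fallback of rerunning the argument directly on $\mathbb T_m$ is exactly the paper's proof.
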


For $R\in\calC$ and $\theta\in\{0,1\}^N$, define $m_\theta(R)=2^{-N}\sum_{\varepsilon\in\{\pm1\}^N}\varepsilon^\theta m(\varepsilon R)$. 
Fix $0<c_0<N^{-1/2}$ and set
$$
    \Gamma_{c_0}=\{R\in\calC:R_i\ge c_0|R|\hbox{ for every }i\}.
$$
Support conditions for $L^\infty$ symbols are always understood in the essential-support sense. 
Let $\psi\in C_c^\infty((1/2,2))$ be non-negative and satisfy $\sum_{j\in\bbZ}\psi(2^{-j}r)=1$ for $r>0$. 
Put
$$
    \psi_j(r)=\psi(2^{-j}r)\quad \text{and}\quad  a_{\theta,j}(R)=\psi_j(|R|)m_\theta(R).
$$
Extending $a_{\theta,j}(2^j\cdot)$ by zero outside $\calC$, define
$$
    \calS_\sigma(m)=  \max_{\theta\in\{0,1\}^N}  \sup_{j\in\bbZ}   \left\|a_{\theta,j}(2^j\cdot)\right\|_{W_2^\sigma(\bbR^N)}.
$$

\begin{theorem}
\label{t:3} 
Let $\Sigma_N\simeq\bbZ_2^N$, let $m\in L^\infty(\bbR^N)$, 
and suppose that $\operatorname*{ess\,supp}m_\theta\subset\Gamma_{c_0}$ for every $\theta\in\{0,1\}^N$. 
Assume that $\calS_\sigma(m)<\infty$ for some $\sigma>N_\kappa/2$. Then $T_m$ extends boundedly to $L^p(\bbR^N,d\omega)$ for every $1<p<\infty$. More precisely,
$$
    \|T_mf\|_{L^p(\bbR^N,d\omega)} \le C_{p,N,\kappa,\sigma,c_0,\psi}
        \left(\|m\|_\infty+\calS_\sigma(m)\right)   \|f\|_{L^p(\bbR^N,d\omega)}.
$$
\end{theorem}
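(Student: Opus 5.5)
The plan is to deduce Theorem \ref{t:3} from Theorem \ref{t:2}. It therefore suffices to show two things. First, the dyadic entries of $\mathbb T_m=\calW UT_mU^{-1}\calW^{-1}$ admit off-diagonal kernels. Second,
$$
    \|\mathbb T_m\|_{\CH^2_{s,\eta}}\le C\,\calS_\sigma(m)
$$
for some $N_\kappa/2<s<\sigma$ and some $0<\eta\le1$, with $C$ depending only on $N,\kappa,\sigma,c_0,\psi$.

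The first step is the Walsh identification. Write $f=\sum_\theta f_\theta$, where $f_\theta$ is the component of parity $\theta$ (even in $x_i$ if $\theta_i=0$, odd if $\theta_i=1$). Tensorizing the one-dimensional even/odd decomposition, $\Fkap$ maps the parity-$\theta$ component to a product of Hankel transforms $\calH_{\theta}$ of orders $\kappa_i-\tfrac12+\theta_i$ on $(0,\infty)^N$. Expanding $m(\xi)=\sum_{\theta'}\varepsilon^{\theta'}m_{\theta'}(R)$ for $\xi=\varepsilon R$, multiplication by $m$ sends parity $\theta$ to parity $\theta+\theta'\pmod 2$ with symbol $m_{\theta'}$. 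Consequently the $(\theta+\theta',\theta)$ entry of $\mathbb T_m$ is the operator $\calH_{\theta+\theta'}^{-1}\,m_{\theta'}\,\calH_\theta$ on the chamber. This operator is a mixed-order Hankel multiplier, and its odd factors have the form $x_i$ times an even Bessel kernel.

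The second step handles the dyadic kernels. For each $j$, the $j$-th dyadic piece of an entry has kernel
$$
    K_{j}(x,y)=\int_{\calC}a_{\theta',j}(R)\,\prod_i \mathcal J_{\alpha_i}(x_iR_i)\,\mathcal J_{\beta_i}(y_iR_i)\,d\mu(R),
$$
where $\mathcal J_{\alpha_i},\mathcal J_{\beta_i}$ are normalized Bessel functions and $d\mu(R)=\prod_i R_i^{2\kappa_i}\,dR$. Because $\operatorname{supp}a_{\theta',j}\subset\Gamma_{c_0}\cap\{|R|\sim2^j\}$, every coordinate satisfies $R_i\sim 2^j$. In this region there are no singularities from the walls, and the measure density, which is a product of factors $R_i^{2\kappa_i}$, is smooth and comparable to $2^{j(N_\kappa-N)}$. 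Rescaling by $2^j$ reduces everything to $j=0$.

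On the physical side, the $L^2$ H\"ormander quantity (with chamber volume weights $V_\calC(y,r)$) is controlled by a weighted Plancherel estimate of the form
$$
    \int_{\calC}|K_0(x,y)|^2\bigl(1+|x-y|\bigr)^{2s}\,d\omega(x)\lesssim V_{\calC}(y,1)^{-1}\,\|a_{\theta',0}\|_{W^s_2}^2 .
$$
I would prove this with the asymptotics of the Bessel functions. For $x_iR_i\gtrsim1$, the kernel $\mathcal J_\alpha(t)$ is $t^{-\alpha-1/2}$ times a combination of $e^{\pm it}$ with symbol-type amplitudes. Each product therefore splits into oscillatory terms whose phases are $R\cdot(\pm x\pm y)$, which reduces the estimate to Euclidean Sobolev-type Fourier decay of $a(\cdot)$ times smooth amplitudes. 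The region $x_iR_i\lesssim1$ contributes a smooth, bounded piece. The H\"older part of order $\eta$ in $y$ then comes from differentiating the $\mathcal J_\beta(y_iR_i)$ factors, at the cost of $\sigma-s>0$ of smoothness. The same representation also shows that the kernels exist off the diagonal and are locally integrable, which is the first requirement of Theorem \ref{t:2}.

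The main obstacle is making this weighted Plancherel bound uniform when some $x_i$ or $y_i$ is near a wall of $\calC$. Near a wall the Bessel asymptotics degenerate, and the measure $d\omega$ carries the factor $x_i^{2\kappa_i}$, which is what produces $N_\kappa$ rather than $N$. Comparing $|x-y|$ with the reflected distances $|x-\varepsilon y|$ is also delicate, because the phases $\pm x_i\pm y_i$ generate reflected terms that are off-diagonal only in the chamber metric. To handle this, I would first try to split each coordinate into the regions $x_i\lesssim 1$ and $x_i\gtrsim1$. In the near-wall region I would use the Hankel Plancherel theorem on the factor with weight $x_i^{2\kappa_i}$ directly. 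In the far region I would use the oscillatory expansion, with the interpolation of Sobolev norms supplying the fractional weight $(1+|x-y|)^{s}$. Finally, summing over the $2^N\times2^N$ entries and applying Theorem \ref{t:2} gives the stated bound, with $\|m\|_\infty$ entering only through the $L^2$ bound of $T_m$.
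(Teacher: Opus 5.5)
Your architecture matches the paper's. You reduce to Theorem \ref{t:2}, identify the $(\alpha,\beta)$ entry as a Hankel multiplier with symbol $m_{\alpha+\beta}$, rescale the dyadic kernel to $j=0$, and split each Bessel factor into a local part and an oscillatory part $e^{\pm it}b(t)$. Because $u_i\simeq1$ on the wall-separated support, the local part is the same as $x_i\lesssim1$. You then reduce to a Euclidean Fourier estimate for the compactly supported symbol.

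The gap is at the step you flag as the main obstacle, and the remedy you propose fails there. Fix $y$ with $y_i\gg1$ and consider $x_i\lesssim1$. Then $1+|x-y|\gtrsim1+y_i$, so the weight contributes a factor $(1+y_i)^{2s}$ that does not depend on $x_i$. The only possible source of compensating decay is the oscillation $e^{\pm iy_iu_i}$ of the $y_i$-Bessel factor integrated against the symbol in $u_i$. Hankel Plancherel in $x_i$ discards exactly this integration. Already for $N=1$, $\alpha=\beta=0$, it gives
$$
\int_0^\infty|K_0(x,y)|^2x^{2\kappa}\,dx=c\int|b(u)|^2|j_{\kappa-1/2}(yu)|^2u^{2\kappa}\,du\lesssim y^{-2\kappa}\|b\|_2^2 .
$$
This cancels $V(y,1)\simeq y^{2\kappa}$ but carries no factor $(1+y)^{-2s}$, so after multiplying by the weight the bound is unbounded in $y$. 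On the near-wall coordinates of $x$ you need nothing beyond boundedness: the local Bessel factor is smooth and that region has bounded $\omega$-measure. The decay must be extracted on the $y$ side.

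Let $J$ be the set of coordinates in which $x_i$ is oscillatory, and change variables $w_J=(\epsilon_ix_i+\delta_iy_i)_{i\in J}$. The remaining phase coordinates $w_{J^c}$ are then numbers fixed by $y$. You must prove
$$
\sup_{w_{J^c}}\int_{\bbR^J}(1+|w|)^{2s}\Bigl|\int e^{iw\cdot u}q(w,u)b(u)\,du\Bigr|^2\,dw_J\lesssim\|b\|_{W_2^\sigma}^2,
$$
and you must prove it with no loss of Sobolev order. A plain Sobolev embedding in the frozen variables would cost about $|J^c|/2$ extra derivatives and would destroy the threshold $\sigma>N_\kappa/2$. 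The embedding is free here because $b$ has compact support: a $w_{J^c}$-derivative produces only bounded factors $u_\ell$ or derivatives of $q$. This is estimate \eqref{e:51} of Lemma \ref{l:9}.

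You should also make explicit the bookkeeping that makes the chamber problem Euclidean. In the oscillatory regime the squared Bessel decay $(1+x_i)^{-2\kappa_i}$ cancels $x_i^{2\kappa_i}$ in $d\omega(x)$, leaving Lebesgue measure. The factor $(1+y_i)^{-\kappa_i}$ from the $y$-Bessel function cancels the corresponding factor of $V(y,1)^{1/2}\simeq\prod_i(1+y_i)^{\kappa_i}$.

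Two of your worries are non-issues:
\begin{itemize}
\item Since $|x_i-y_i|\le x_i+y_i$ on $\calC$, one has $1+|x-y|\lesssim1+|\Lambda|$ for every sign pattern, where $\Lambda=(\pm x_i\pm y_i)_i$ is the phase vector. The reflected phases are therefore harmless.
\item The difference estimate costs no smoothness. A $y_i$-derivative brings down only a bounded $u_i$ or improves the amplitude, which gives $\eta=1$ directly.
\end{itemize}
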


\begin{remark}
The condition $\sigma>N_\kappa/2$ is the standard open H\"ormander range at the sharp scale. 
On the chamber side, the passage from $\CH^2_{s,\eta}$ to the integral H\"ormander condition uses
$$
    \int_{\calC}\left(1+\frac{\rho(x,y)}{r}\right)^{-2s}d\omega(x)   \lesssim V(y,r),
$$
and this argument requires $2s>N_\kappa$. 
On the multiplier side, rank one already prevents lowering the scale: for $\Sigma_1=A_1\simeq\bbZ_2$ the even part is the Hankel transform for the Bessel measure $x^{2\kappa}\,dx$, 
whose homogeneous dimension is $1+2\kappa=N_\kappa$, and imaginary powers in the sharp Bessel multiplier theorem rule out Sobolev order below
$N_\kappa/2$ \cite{KP2019}. This is the range proved here and the one used throughout the paper.
\end{remark}

For an integer $M\ge0$, let
$$
    \calM_M(m):=   \max_{\theta\in\{0,1\}^N}
        \max_{|\nu|\le M} \sup_{R\in\calC} |R|^{|\nu|}\left|\partial_R^\nu m_\theta(R)\right|.
$$

\begin{corollary}
\label{c:1} 
Let $\Sigma_N\simeq\bbZ_2^N$, let $m\in L^\infty(\bbR^N)$, and suppose that $\calM_M(m)<\infty$ and 
$\operatorname*{ess\,supp}m_\theta\subset\Gamma_{c_0}$ for every $\theta\in\{0,1\}^N$, for some integer $M>N_\kappa/2$. 
Then $T_m$ extends boundedly to $L^p(\bbR^N,d\omega)$ for every $1<p<\infty$, with
$$
    \|T_mf\|_{L^p(\bbR^N,d\omega)}   \le C_{p,N,\kappa,M,c_0,\psi}    \left(\|m\|_\infty+\calM_M(m)\right)   \|f\|_{L^p(\bbR^N,d\omega)}.
$$
\end{corollary}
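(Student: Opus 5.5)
The plan is to deduce Corollary \ref{c:1} from Theorem \ref{t:3} with $\sigma=M$. Since $M>N_\kappa/2$ and the support hypothesis is the same, it is enough to show
$$
\calS_M(m)\le C_{N,M,c_0,\psi}\,\calM_M(m).
$$
Theorem \ref{t:3} then gives the bound with constant $C(\|m\|_\infty+\calM_M(m))$. Because $M$ is an integer, the Sobolev norm $\|\cdot\|_{W_2^M(\bbR^N)}$ is equivalent to $\sum_{|\nu|\le M}\|\partial^\nu\cdot\|_{L^2}$. So we need a uniform $L^2$ bound on the derivatives of $b_{\theta,j}(R):=a_{\theta,j}(2^jR)$, extended by zero outside $\calC$.

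\textbf{Step 1: pointwise bounds inside the chamber.} On $\calC$ we have $b_{\theta,j}(R)=\psi(|R|)\,m_\theta(2^jR)$, so its support lies in the annulus $1/2<|R|<2$. By Leibniz and the chain rule,
$$
\partial^\nu b_{\theta,j}(R)=\sum_{\mu\le\nu}\binom{\nu}{\mu}\partial^{\nu-\mu}[\psi(|R|)]\;2^{j|\mu|}(\partial^\mu m_\theta)(2^jR).
$$
\begin{itemize}
\item The factor $\partial^{\nu-\mu}[\psi(|R|)]$ is bounded by $C_\psi$ on the annulus, since $|R|$ is smooth away from the origin.
\item The hypothesis $\calM_M(m)<\infty$ gives
$$
2^{j|\mu|}|(\partial^\mu m_\theta)(2^jR)|=|R|^{-|\mu|}\,|2^jR|^{|\mu|}|\partial^\mu m_\theta(2^jR)|\le 2^{|\mu|}\calM_M(m).
$$
\end{itemize}
Hence $|\partial^\nu b_{\theta,j}|\le C\,\calM_M(m)$ on the bounded set $\{1/2<|R|<2\}\cap\calC$, uniformly in $j$ and $\theta$.

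\textbf{Step 2: no boundary terms across the chamber walls.} This is the main obstacle. The zero extension across $\partial\calC$ could create singular distributional derivatives on the walls, and pointwise bounds inside $\calC$ would not control them. The support hypothesis is what removes this.
\begin{itemize}
\item Since $\operatorname*{ess\,supp} m_\theta\subset\Gamma_{c_0}$, the function $b_{\theta,j}$ vanishes a.e. outside $\Gamma_{c_0}\cap\{1/2\le|R|\le2\}$.
\item On that set every coordinate satisfies $R_i\ge c_0/2$, so it is a compact subset of $\calC$ at distance at least $c_0/2$ from $\partial\calC$.
\item Therefore $b_{\theta,j}$ vanishes on the open neighbourhood $\{R:\min_iR_i<c_0/4\}$ of the walls.
\end{itemize}
So the zero extension is locally equal to $b_{\theta,j}$ or to $0$ everywhere. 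Its distributional derivatives are the classical ones computed in Step 1, and no wall terms arise.

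\textbf{Step 3: regularity and conclusion.} The derivatives $\partial^\mu m_\theta$ are taken a priori in the sense the finiteness of $\calM_M$ requires, i.e. $m_\theta\in W^{M,\infty}_{\rm loc}(\calC)$. Then $b_{\theta,j}\in W^{M,\infty}$ with compact support in $\{|R|<2\}$. Integrating the bound of Step 1 over the ball of radius $2$ gives
$$
\|b_{\theta,j}\|_{W_2^M}\le C_{N,M,c_0,\psi}\,\calM_M(m),
$$
which yields $\calS_M(m)\lesssim\calM_M(m)$. Applying Theorem \ref{t:3} with $\sigma=M$ completes the argument.
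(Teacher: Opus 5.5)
Your proposal is correct and follows the paper's route: you bound the Walsh--Sobolev quantity by the Mihlin quantity, which is the content of Lemma \ref{l:8}, and then invoke Theorem \ref{t:3}. The only difference is that you take $\sigma=M$ directly, using the integer-order derivative bounds that the paper's proof of Lemma \ref{l:8} already establishes, whereas the paper chooses $N_\kappa/2<\sigma<M$ and interpolates, a step that is unnecessary here.
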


\subsection{Positive examples and limitations} \label{s:2}

The product scalar theorem contains non-radial symbols. If $m(\xi)=a(\xi/|\xi|)$ with $a\in C^\infty(\mathbb S^{N-1})$, 
and the angular support of $a$ is contained in $\{|\xi_i|\ge c_0|\xi|\hbox{ for all }i\}$, 
then the conditions of Corollary \ref{c:1} hold. Unless $a$ is constant on the sphere, this is not a radial spectral multiplier.

The abstract theorem gives further reflection-group examples under the chamber matrix kernel condition. 
For the dihedral group $I_2(q)$, the fundamental chamber is a wedge of angle $\pi/q$. 
If the lifted $2q\times2q$ chamber kernels satisfy $\CH^2_{s,\eta}$, Theorem \ref{t:1} gives full $L^p$ boundedness. 
Thus $A_2\simeq I_2(3)$ and $B_2\simeq I_2(4)$ are specific non-product cases. 
The following rank-two statement is just Theorem \ref{t:1} written with the dihedral notation used later.

\begin{theorem} \label{t:4} 
Let $G_q=I_2(q)$ be the dihedral reflection group of order $2q$ on $\bbR^2$, let $\calC$ be a fundamental chamber, and set $U_qf(x)=\bigl(f(\sigma x)\bigr)_{\sigma\in G_q}$, $x\in\calC$. 
For $m\in L^\infty(\bbR^2)$, set $\mathbb T_m^{(q)}=U_qT_mU_q^{-1}$. 
Assume that the dyadic entries of $\mathbb T_m^{(q)}$ admit off-diagonal kernels and that $\mathbb T_m^{(q)}\in\CH^2_{s,\eta}(\calC)$ for some $s>N_\kappa/2$ and $0<\eta\le1$. Then, for every $1<p<\infty$,
$$
     \|T_mf\|_{L^p(\bbR^2,d\omega)}  \le C_{p,q,\kappa,s,\eta}
        \bigl(\|m\|_\infty+\|\mathbb T_m^{(q)}\|_{\CH^2_{s,\eta}(\calC)}\bigr)   \|f\|_{L^p(\bbR^2,d\omega)}.
$$
No $G_q$-invariance or radiality condition is imposed on $f$.
\end{theorem}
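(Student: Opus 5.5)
Theorem \ref{t:4} is Theorem \ref{t:1} specialized to $G=G_q=I_2(q)$ acting on $\bbR^2$, so the plan is to check that the dihedral data fit Theorem \ref{t:1} and then invoke it. We take $R$ to be the dihedral root system of $I_2(q)$, with $\kappa$ constant on each $G_q$-orbit of roots. For odd $q$ there is one orbit; for even $q$ there are two. Then $d\omega=h_\kappa^2\,dx$ with $h_\kappa(x)=\prod_{\alpha\in R_+}|\langle\alpha,x\rangle|^{\kappa(\alpha)}$, and $N_\kappa=2+2\gamma_\kappa$. A fundamental chamber $\calC$ is an open wedge of angle $\pi/q$ bounded by two reflecting lines. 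The group $G_q$ consists of $q$ rotations and $q$ reflections, so $|G_q|=2q$ and $U_q$ takes values in $\bbC^{2q}$.

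The first step is to record that $U_q$ coincides with the lifting $U$ of Theorem \ref{t:1} for $G=G_q$. This means we confirm the identification of $L^p(\bbR^2,d\omega)$ with $L^p(\calC,d\omega;\bbC^{G_q})$. The chamber tiling $\bbR^2=\bigcup_{\sigma\in G_q}\sigma\overline{\calC}$ has disjoint interiors, the walls are $\omega$-null, and $h_\kappa$ is $G_q$-invariant. Together these give
$$\|f\|_{L^p(\bbR^2,d\omega)}^p=\sum_{\sigma\in G_q}\int_{\calC}|f(\sigma x)|^p\,d\omega(x).$$
Hence $U_q$ is an isometry with $U_q^{-1}(g_\sigma)_\sigma(\sigma x)=g_\sigma(x)$, and $\mathbb T_m^{(q)}=\mathbb T_m^{G_q}$ holds literally. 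The chamber volume $V_\calC(y,r)=\omega(B(y,r)\cap\calC)$ and the class $\CH^2_{s,\eta}(\calC)$ are then exactly those named in Theorem \ref{t:1}, with index set $G_q$.

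The second step is to apply Theorem \ref{t:1} with $G=G_q$ and $N=2$. Its hypotheses are the existence of off-diagonal kernels for the dyadic entries and the membership $\mathbb T_m^{(q)}\in\CH^2_{s,\eta}(\calC)$ with $s>N_\kappa/2$ and $0<\eta\le1$; these are precisely the assumptions of Theorem \ref{t:4}. It yields the bound with constant $C_{p,G_q,\kappa,s,\eta}$. Since $G_q$ is determined by $q$ up to rotation, and a rotation conjugates the whole setup isometrically, this constant may be written $C_{p,q,\kappa,s,\eta}$. No symmetry of $f$ is used, because Theorem \ref{t:1} already imposes none.

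The only point requiring any care is the geometric input implicitly used inside Theorem \ref{t:1}: the doubling property of $V_\calC$ and the integral estimate $\int_\calC(1+\rho(x,y)/r)^{-2s}d\omega(x)\lesssim V(y,r)$ for $2s>N_\kappa$. For a wedge both follow from homogeneity of $h_\kappa^2$ of degree $2\gamma_\kappa$ together with the fact that $\calC$ is a cone. Since Theorem \ref{t:1} is stated for arbitrary finite reflection groups, I expect no genuine obstacle; the proof reduces to identifying notation.
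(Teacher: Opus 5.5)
Your reduction is correct and is exactly how the paper itself frames Theorem \ref{t:4} (``just Theorem \ref{t:1} written with the dihedral notation''). The paper's written proof in Section \ref{s:12} does not cite Theorem \ref{t:1}; it re-runs the same chain on the wedge (Lemmas \ref{l:14}--\ref{l:16}, Propositions \ref{p:7}--\ref{p:8}, then Theorem \ref{t:6} with $E=E_q$).

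Two harmless bookkeeping points. The Section \ref{s:12} norm is built from $\norm{K_j^{(q)}(x,y)}_{\calL(E_q)}$ rather than the entrywise maximum; the two are equivalent up to a factor depending only on $q$, which is absorbed into $C_{p,q,\kappa,s,\eta}$. Your aside that doubling follows from homogeneity and the cone structure alone is too quick: homogeneity only reduces matters to a uniform scale-one estimate, which is the content of Lemma \ref{l:12} or Lemma \ref{l:14}. This is not load-bearing, since Theorem \ref{t:1} already contains it.
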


For $A_2\simeq I_2(3)$ the chamber has angle $\pi/3$; for $B_2\simeq I_2(4)$ it may be written as $\{(x_1,x_2):x_1>x_2>0\}$. These are the two explicit non-product examples used below.

The distinction between abstract and scalar theorems is essential. 
The group $A_1^N$ has only one-dimensional Walsh characters and the Dunkl kernel is a tensor product of one-dimensional even/odd Bessel kernels. 
For $A_2\simeq S_3$, the Weyl group has a two-dimensional irreducible representation, so there is no analogous Walsh diagonalization of the chamber matrix.
For $B_N$, $N\ge3$, the Coxeter boundary includes walls of the form $x_i=x_j$ in addition to coordinate walls, whenever the corresponding root multiplicity is present; 
a condition separating only from coordinate walls then does not see all boundary faces. 
For $A_{N-1}$, the chamber $x_1>\cdots>x_N$ has no sign-parity calculus. 
These observations are not counterexamples to multiplier boundedness. They show that the scalar Walsh--Sobolev proof does not formally extend to these groups.

\subsection{A pointwise criterion and spectral diagonal examples}

We also record two consequences of the chamber theorem. 
First, a pointwise dyadic kernel condition with decay order $M>N_\kappa+1$ implies 
the chamber $\CH^2_{s,\eta}$ condition for some $s>N_\kappa/2$ and hence gives strong $L^p$ bounds; 
a balanced pointwise variant with denominator $(V(x,r)V(y,r))^{1/2}$ is also included because this form occurs naturally in heat-kernel estimates. 
If, in addition, the principal-value truncations are uniformly $L^2$ bounded or converge in $L^2$, the pointwise criterion gives weak type $(1,1)$.

\begin{corollary}\label{c:2} 
Let $M>N_\kappa+1$. Suppose that the dyadic chamber kernels of the entries of $\mathbb T_m$ satisfy 
the pointwise size and first difference estimates of Definition \ref{d:5} with order $M$ and some $0<\eta\le1$. 
Then the conclusion of Theorem \ref{t:2} holds. 
If the associated truncated kernels also satisfy the admissible-truncation condition in Definition \ref{d:4}, then $T_m$ is of weak type $(1,1)$.
\end{corollary}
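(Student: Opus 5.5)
We prove Corollary \ref{c:2} by checking that the pointwise hypotheses of Definition \ref{d:5} imply the chamber condition $\CH^2_{s,\eta}$ for some $s>N_\kappa/2$. Theorem \ref{t:2} then gives the strong $L^p$ bounds. For weak type $(1,1)$ we combine the same kernel estimates with the admissible-truncation condition of Definition \ref{d:4} in a standard Calder\'on--Zygmund decomposition on the chamber.

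\emph{Step 1: from pointwise to $L^2$ H\"ormander.} Fix a dyadic scale $r=2^{-j}$ and an entry $K^{\varepsilon,\varepsilon'}_j$ of the lifted matrix. Definition \ref{d:5} provides a size bound of the form
$$|K_j(x,y)|\lesssim V_\calC(y,r)^{-1}\Bigl(1+\tfrac{\rho(x,y)}{r}\Bigr)^{-M}.$$
It also provides a first-difference bound with an extra factor $(|y-y'|/r)^\eta$ for $|y-y'|\le r$. Choose $s$ with $N_\kappa/2<s<(M-1)/2$, which is possible because $M>N_\kappa+1$. We estimate the weighted $L^2$ quantity in $\CH^2_{s,\eta}$ by
$$\int_\calC |K_j(x,y)|^2\Bigl(1+\tfrac{\rho(x,y)}{r}\Bigr)^{2s}d\omega(x)\lesssim V_\calC(y,r)^{-2}\int_\calC\Bigl(1+\tfrac{\rho(x,y)}{r}\Bigr)^{2s-2M}d\omega(x).$$
Since $2M-2s>N_\kappa$, the remaining integral is $\lesssim V_\calC(y,r)$. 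This uses the dyadic annular decomposition together with the doubling bound $V_\calC(y,2^kr)\lesssim 2^{kN_\kappa}V_\calC(y,r)$, which is available because $\calC$ is a cone and $\omega$ is homogeneous of degree $N_\kappa$. The difference term is handled the same way and yields the H\"older factor $\eta$. The balanced variant with $(V(x,r)V(y,r))^{1/2}$ is reduced to the previous case via
$$V(x,r)\ge V(y,r)\Bigl(1+\tfrac{\rho(x,y)}{r}\Bigr)^{-N_\kappa}.$$
This step is where the extra unit in $M>N_\kappa+1$ is consumed: it absorbs the loss of one power of the weight when $V(x,r)$ is exchanged for $V(y,r)$, and the margin $2s>N_\kappa$ is still retained. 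The chamber norm is therefore controlled by the Definition \ref{d:5} constants, and Theorem \ref{t:2} gives the $L^p$ bound for $1<p<\infty$.

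\emph{Step 2: weak type $(1,1)$.} Summing the dyadic pieces gives a global matrix kernel $K$ on $\calC\times\calC$ off the diagonal. Its H\"ormander integral condition is
$$\sup_{y,y'}\int_{\rho(x,y)>2|y-y'|}|K(x,y)-K(x,y')|\,d\omega(x)<\infty.$$
To verify it, split the dyadic sum into scales $r\ge|y-y'|$, where the difference bound applies, and scales $r<|y-y'|$, where the size bound applies on each piece separately. Each family of scales is summed as a geometric series using the $L^1$ analogue of the integral in Step 1, which converges since $M>N_\kappa$. The admissible-truncation condition identifies $\mathbb T_m$ with the principal-value operator of $K$ as an $L^2$-bounded operator. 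We then apply the Calder\'on--Zygmund decomposition on the space of homogeneous type $(\calC,\rho,\omega)$ to $Uf$. Finally we transfer back through $U$, which is an isometry up to the finite counting measure, to obtain weak type $(1,1)$ for $T_m$.

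\emph{Main obstacle.} The delicate point is the chamber volume near walls. There $V_\calC(y,r)$ is not comparable to $r^{N_\kappa}$, so each comparison must use only doubling and the volume-ratio bound above, never explicit power formulas. I would isolate this in a single lemma, namely the integral estimate for $\int_\calC(1+\rho/r)^{-2s}\,d\omega\lesssim V(y,r)$ stated in the preceding remark, and quote it in both steps.
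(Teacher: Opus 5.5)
Your proposal is correct for the statement as posed and follows the paper's route. The paper deduces Corollary \ref{c:2} from Proposition \ref{p:5} (pointwise bounds of order $M$ give $\CH^2_{s,\eta}$ once $M-s>N_\kappa/2$, via Lemma \ref{l:2}) and Theorem \ref{t:2}, and gets weak type $(1,1)$ from Proposition \ref{p:3} and Theorem \ref{t:6}. Your derivation of the H\"ormander integral condition directly from the pointwise bounds in Step 2, rather than through $\CH^2_{s,\eta}$, makes no real difference.

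One side remark in your Step 1 is wrong. It does not affect the corollary, whose hypotheses involve only Definition \ref{d:5}.
\begin{itemize}
\item Doubling gives $V(y,r)\lesssim(1+\rho(x,y)/r)^{N_\kappa}V(x,r)$. So replacing $(V(x,r)V(y,r))^{-1/2}$ by $V(y,r)^{-1}$ costs $(1+\rho(x,y)/r)^{N_\kappa/2}$, not one power.
\item Your reduction of the balanced condition would therefore need $M>3N_\kappa/2$. This does not follow from $M>N_\kappa+1$ when $N_\kappa>2$.
\item The paper handles Definition \ref{d:6} separately in Proposition \ref{p:6}. It uses Lemma \ref{l:11}, i.e.\ the explicit product bound $d\omega(x)/V(x,r)\lesssim r^{-N}\,dx$, and needs only $M>(N+N_\kappa)/2$.
\end{itemize}

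Under Definition \ref{d:5} no volume exchange is needed at all. The first-variable estimates \eqref{e:27} and \eqref{e:29} come from \eqref{e:77} and \eqref{e:78}, exactly as the second-variable ones come from \eqref{e:75} and \eqref{e:76}. Moreover, $M>N_\kappa$ already suffices: choose $s\in(N_\kappa/2,M-N_\kappa/2)$. So the extra unit in $M>N_\kappa+1$ is slack and is not consumed anywhere.

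Finally, the doubling bound $V(y,2^kr)\lesssim 2^{kN_\kappa}V(y,r)$ does not follow from homogeneity and conicity alone, since dilation moves the center. It comes from the explicit volume formula of Lemma \ref{l:1}.
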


Second, Laplace-transform type spectral multipliers form a diagonal class in the chamber representation. 
For $m(\xi)=|\xi|^2\int_0^\infty e^{-t|\xi|^2}\phi(t)\,dt$, $\phi\in L^\infty(0,\infty)$, 
the theorem of Hassani--Sifi gives strong $L^p$ and weak type $(1,1)$ bounds in the $\bbZ_2^N$ Dunkl setting. 
We quote this result as an external spectral theorem and place it in the diagonal chamber formulation.

\begin{theorem} \label{t:5} 
Let $\Sigma_N\simeq\bbZ_2^N$, and let $m(\xi)=|\xi|^2\int_0^\infty e^{-t|\xi|^2}\phi(t)\,dt$, where $\phi\in L^\infty(0,\infty)$. Then
$$
    \|T_m f\|_{L^p(\bbR^N,d\omega)}    \le C_{p,N,\kappa}\|\phi\|_{\infty}   \|f\|_{L^p(\bbR^N,d\omega)}, \qquad 1<p<\infty.
$$
Moreover,
$$
    \omega\left\{x:|T_mf(x)|>\lambda\right\}   \le \frac{C_{N,\kappa}\|\phi\|_{\infty}}{\lambda}    \|f\|_{L^1(\bbR^N,d\omega)}.
$$
\end{theorem}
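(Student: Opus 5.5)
The plan is to treat Theorem \ref{t:5} as a quotation of the Hassani--Sifi theorem, arranged so that the constants visibly depend only on $\|\phi\|_\infty$, and then to check that it is compatible with the chamber formulation. The symbol is radial: $m(\xi)=M(|\xi|^2)$ with $M(\lambda)=\lambda\int_0^\infty e^{-t\lambda}\phi(t)\,dt$. Hence $m(\varepsilon\xi)=m(\xi)$ for every $\varepsilon\in\{\pm1\}^N$. It follows that $m_\theta=0$ for $\theta\neq0$ and $m_0=m$, so in Walsh parity coordinates $\mathbb T_m$ is diagonal.

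The argument proceeds in three steps.

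First, I note that $\|m\|_\infty\le\|\phi\|_\infty$, because $|\xi|^2\int_0^\infty e^{-t|\xi|^2}\,dt=1$. By Plancherel for $\Fkap$, this gives $L^2(d\omega)$ boundedness with constant $\|\phi\|_\infty$.

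Second, I write $T_m=M(\Delta_\kappa)$, where $-\Delta_\kappa$ is the Dunkl Laplacian with symbol $|\xi|^2$. The functional calculus then gives
\[
T_mf=\int_0^\infty \phi(t)\,\Delta_\kappa e^{t\Delta_\kappa}f\,dt
\]
up to sign, which is exactly the Laplace-transform-type multiplier class of Stein for the Dunkl heat semigroup. The Dunkl heat semigroup is symmetric, Markovian and contractive on all $L^p(d\omega)$, since the Dunkl heat kernel is positive and has mass one. Stein's universal multiplier theorem therefore already yields strong type $(p,p)$ for $1<p<\infty$ with $C_p\|\phi\|_\infty$. This argument is independent of the product structure, and I would present it as a cross-check.

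Third, for the weak type $(1,1)$ bound I invoke the Hassani--Sifi theorem in the $\bbZ_2^N$ setting, as announced in the text. I only need to verify its hypotheses: $\phi\in L^\infty(0,\infty)$ and $m$ of the stated form. Its constant is linear in $\|\phi\|_\infty$ by homogeneity. If one wants to apply Hassani--Sifi to the strong type bounds as well, Marcinkiewicz interpolation between weak $(1,1)$ and $L^2$, followed by duality (the adjoint has symbol $\bar m$, which is of the same form with $\bar\phi$), also gives $1<p<\infty$.

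The main obstacle is making sure that the quoted theorem is stated for the same normalization of $d\omega$ and $\Fkap$, and for general $\kappa\ge0$ on $\bbZ_2^N$. The point to check is that the Hassani--Sifi setting, where the Dunkl operators tensorize and heat-kernel Gaussian bounds with respect to $V(x,\sqrt t)$ hold, matches the present one. If instead one wanted a self-contained proof of weak $(1,1)$, I would try to verify the pointwise kernel estimates of Definition~\ref{d:5} for the diagonal dyadic pieces $\int\phi(t)\psi_j(\cdot)\,\partial_t h_t\,dt$. These would be obtained from the Gaussian bounds $|\partial_t h_t(x,y)|\lesssim t^{-1}V(x,\sqrt t)^{-1}e^{-c\rho(x,y)^2/t}$ and the corresponding Hölder estimates, after which Corollary~\ref{c:2} would apply. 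In that route, the admissible-truncation condition of Definition~\ref{d:4} is the delicate point.
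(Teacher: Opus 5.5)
Your proposal is correct, and its backbone is the paper's own proof. The paper simply quotes Hassani--Sifi for both the strong type and the weak type $(1,1)$ bounds. It adds only the remark you make at the start: $m$ is radial, so $m_\theta=0$ for $\theta\neq0$ and the Walsh chamber matrix is diagonal, which plays no role in the boundedness.

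The genuinely different ingredient is your second step. You write $T_m=\int_0^\infty\phi(t)\,(-\Delta_\kappa)e^{t\Delta_\kappa}\,dt$ and apply Stein's theorem on multipliers of Laplace-transform type for symmetric diffusion semigroups. This is a valid, self-contained proof of the $L^p$ estimate. The Dunkl heat kernel is positive, symmetric and of total mass one (R\"osler), so $e^{t\Delta_\kappa}$ is a symmetric Markov semigroup contracting every $L^p(d\omega)$. Stein's theorem then gives $\|T_m\|_{L^p\to L^p}\le C_p\|\phi\|_\infty$, with $C_p$ depending only on $p$.

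Compared with the citation, this route uses neither the product structure nor any kernel estimate, so it gives the strong type part for every finite reflection group. What it cannot give is the weak type $(1,1)$ bound, which needs Calder\'on--Zygmund input; there you depend on Hassani--Sifi exactly as the paper does.

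Your fallback through Corollary~\ref{c:2}, with its delicate truncation condition, is unnecessary. Your $M$ satisfies $|\lambda^kM^{(k)}(\lambda)|\le 2\,k!\,\|\phi\|_\infty$, so $m$ satisfies Mihlin bounds of every order. Hence the Dziuba\'nski--Hejna theorem recalled in Section~\ref{s:14}, which as published also includes weak type $(1,1)$, applies directly, even for general $G$.
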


\subsection{Logical order of the main results}

The statements above are ordered by logical dependence rather than by generality of the symbol class. 
The abstract chamber theorem is the base result: once a lifted matrix kernel satisfies $\CH^2_{s,\eta}$, 
Calder\'on--Zygmund theory on the chamber gives the full-space $L^p$ estimate. 
The product scalar theorem supplies a verification of this condition: in the group $A_1^N$, the Walsh--Bessel decomposition shows that wall-separated Sobolev control of the scalar pieces $m_\theta$ implies the chamber estimates. 
The pointwise criterion gives a second way to verify the same estimates from dyadic kernel bounds. 
The Hassani--Sifi theorem is quoted as an external spectral multiplier theorem and then written in chamber coordinates. The implications used in the paper are
$$
\begin{array}{c}
\text{scalar Walsh--Sobolev control in }A_1^N\\
\Downarrow\\
\text{dyadic Bessel estimates between parity components}\\
\Downarrow\\
\mathbb T_m\in\CH^2_{s,\eta}\\
\Downarrow\\
\text{finite matrix Calder\'on--Zygmund theorem on one chamber}\\
\Downarrow\\
T_m:L^p(\bbR^N,d\omega)\to L^p(\bbR^N,d\omega).
\end{array}
$$
For a general finite reflection group, only the last two arrows are asserted in this paper. The first two arrows are
precisely the product-specific part.

\subsection{Organization}

The paper is organized into six sections after the introduction. 
Section \ref{s:3} develops the product chamber notation: the $A_1^N$ Dunkl setting, 
the chamber value lift, Walsh parity, and the product Hankel representation of the lifted multiplier. 
Section \ref{s:6} proves the chamber $L^2$ H\"ormander estimates and the finite matrix Calder\'on--Zygmund theorem, 
and then proves the abstract product chamber theorem. 
Section \ref{s:9} verifies the chamber condition from wall-separated Walsh--Sobolev control and proves Theorem \ref{t:3}.
Section \ref{s:10} contains the pointwise criterion and the finite-reflection-group chamber theorem, with the dihedral cases $A_2$ and $B_2$ written explicitly. 
Section \ref{s:13} records the Hassani--Sifi spectral theorem in chamber setting. 
Section \ref{s:14} compares the result with earlier multiplier theorems and lists the remaining scalar verification questions.

\section{Product chamber notation}
\label{s:3}

\subsection{\texorpdfstring{The group $\bbZ_2^N$}{The group z2d}}

Throughout the main part of the paper we work with the root system $R=\{\pm e_1,\ldots,\pm e_N\}$. 
The reflection group is generated by the coordinate reflections $\sigma_j(x_1,\ldots,x_j,\ldots,x_N)=(x_1,\ldots,-x_j,\ldots,x_N)$, 
hence $\Sigma_N\simeq \bbZ_2^N$. The multiplicity is an $N$-tuple $\kappa=(\kappa_1,\ldots,\kappa_N)$ with $\kappa_j\ge0$. The weight is
\begin{equation}\label{e:1}
        h_\kappa(x)=\prod_{j=1}^N |x_j|^{\kappa_j},    \qquad    d\omega(x)=\prod_{j=1}^N |x_j|^{2\kappa_j}\,dx.
\end{equation}
The number $\gamma_\kappa=\sum_{j=1}^N\kappa_j$ is the degree of homogeneity of $h_\kappa$, 
and therefore $d\omega(r x)=r^{N_\kappa}\,d\omega(x)$ with $N_\kappa=N+2\gamma_\kappa$.

The Dunkl operators are
\begin{equation}\label{e:2}
        D_j f(x) =\partial_j f(x)     +\kappa_j \frac{f(x)-f(\sigma_jx)}{x_j}, \qquad j=1,\ldots,N.
\end{equation}
The Dunkl Laplacian is $\Delta_\kappa=\sum_{j=1}^N D_j^2$. The Dunkl transform is
\begin{equation}\label{e:3}
        \Fkap f(\xi) =c_\kappa\int_{\bbR^N} f(x)E_\kappa(-i x,\xi)\,d\omega(x),
\end{equation}
where $E_\kappa$ is the product Dunkl kernel and $c_\kappa$ is the normalizing constant which makes $\Fkap$ unitary on $L^2(d\omega)$.

The exact value of $c_\kappa$ will never matter. We keep the same convention throughout the paper. All constants may depend on this convention.

\subsection{The chamber and its measure}

Let $\calC=(0,\infty)^N$. On $\calC$ we use the Euclidean distance $\rho(x,y)=|x-y|$. 
The chamber measure is the restriction $d\omega(x)=\prod_{j=1}^N x_j^{2\kappa_j}\,dx$, 
and for $x\in\calC$, $r>0$, we write $B(x,r)=\{y\in\calC:|x-y|<r\}$ and $V(x,r)=\omega(B(x,r))$.

\begin{lemma} \label{l:1} 
There are constants $C_1,C_2>0$, depending only on $N$ and $\kappa$, such that for all $x\in\calC$ and $r>0$,
\begin{equation}\label{e:4}
        C_1 r^N\prod_{j=1}^N (x_j+r)^{2\kappa_j}  \le V(x,r) \le  C_2 r^N\prod_{j=1}^N (x_j+r)^{2\kappa_j}.
\end{equation}
Consequently $(\calC,\rho,d\omega)$ is a space of homogeneous type and
\begin{equation}\label{e:5}
        V(x,R) \le C\left(\frac{R}{r}\right)^{N_\kappa}V(x,r), \qquad 0<r\le R.
\end{equation}
\end{lemma}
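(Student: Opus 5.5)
The plan is to reduce the volume estimate to a product of one-dimensional estimates by comparing the Euclidean ball with cubes. For $x\in\calC$ and $r>0$, set
$$
Q(x,r)=\prod_{j=1}^N\bigl((x_j-r,x_j+r)\cap(0,\infty)\bigr).
$$
Then
$$
B(x,r)\subset Q(x,r)\qquad\text{and}\qquad Q(x,r/\sqrt N)\subset B(x,r).
$$
Because $d\omega$ is a product measure on $\calC$, we have
$$
\omega(Q(x,r))=\prod_{j=1}^N\int_{(x_j-r,x_j+r)\cap(0,\infty)}t^{2\kappa_j}\,dt .
$$
So it suffices to prove the one-dimensional estimate: for $a\ge0$, $\kappa\ge0$ and $r>0$,
$$
\int_{(a-r,a+r)\cap(0,\infty)}t^{2\kappa}\,dt\simeq r(a+r)^{2\kappa},
$$
with constants depending only on $\kappa$.

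For the upper bound, the integrand is at most $(a+r)^{2\kappa}$ and the interval has length at most $2r$. For the lower bound I split into two cases.
\begin{itemize}
\item If $a\ge r$, the interval contains $(a,a+r)$. On it $t\ge a\ge(a+r)/2$, which gives the bound $2^{-2\kappa}r(a+r)^{2\kappa}$.
\item If $a<r$, the interval contains $(r/2,r)$ when $a\le r/2$, and contains $(a,a+r/2)$ with $t\ge r/2$ when $r/2<a<r$. In both subcases the integrand is at least $(r/2)^{2\kappa}\gtrsim(a+r)^{2\kappa}$ on an interval of length $r/2$.
\end{itemize}
Taking products over $j$ gives $\omega(Q(x,r))\simeq r^N\prod_j(x_j+r)^{2\kappa_j}$. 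Applying this with radius $r$ and radius $r/\sqrt N$, and noting that $x_j+r/\sqrt N\simeq x_j+r$, yields \eqref{e:4}.

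For the doubling statement \eqref{e:5}, let $0<r\le R$ and use \eqref{e:4} at both radii. The ratio of the volumes is then controlled by
$$
\Bigl(\frac Rr\Bigr)^N\prod_j\Bigl(\frac{x_j+R}{x_j+r}\Bigr)^{2\kappa_j}.
$$
Since $(x_j+R)/(x_j+r)\le R/r$, this is at most $(R/r)^{N+2\gamma_\kappa}=(R/r)^{N_\kappa}$. With $R=2r$ this is the doubling property. Together with the fact that $\rho$ is a metric and balls are open with positive, finite measure, this makes $(\calC,\rho,d\omega)$ a space of homogeneous type.

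The only point needing care is the lower bound near the wall, namely the case $x_j<r$, where the ball is truncated by $\calC$. The case split above handles this. Everything else is routine.
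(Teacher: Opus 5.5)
Your proof is correct and follows essentially the same route as the paper: you sandwich the chamber ball between truncated boxes (making the inner constant $c=N^{-1/2}$ explicit), reduce to the one-dimensional estimate $\int_{(a-r,a+r)\cap(0,\infty)}t^{2\kappa}\,dt\simeq r(a+r)^{2\kappa}$, and derive the doubling bound from $(x_j+R)/(x_j+r)\le R/r$. The only difference is cosmetic: you split the one-dimensional lower bound at $a=r$, while the paper splits at $a=2r$.
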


\begin{proof}
The ball $B(x,r)$ contains a rectangular box and is contained in a larger rectangular box with side lengths comparable to $r$. 
More precisely, for fixed dimensional constants,
$$
     \prod_{j=1}^N (x_j-c r,x_j+c r)\cap(0,\infty)^N    \subset B(x,r) \subset \prod_{j=1}^N (x_j-r,x_j+r)\cap(0,\infty)^N.
$$
It is therefore enough to estimate the one-dimensional integrals
$$
        \int_{\max(0,x_j-r)}^{x_j+r} t^{2\kappa_j}\,dt.
$$

For $a\ge0$ and $r>0$,
$$
   \int_{\max(0,a-r)}^{a+r} t^{2\kappa_j}\,dt \simeq r(a+r)^{2\kappa_j}.
$$
Indeed, if $a\le2r$, both sides are comparable to $r^{2\kappa_j+1}$. 
If $a>2r$, then $t\simeq a$ on the interval and the integral is comparable to $r a^{2\kappa_j}$, which is the same as $r(a+r)^{2\kappa_j}$. 
Multiplying these one-dimensional estimates proves \eqref{e:4}.

For \eqref{e:5}, use \eqref{e:4} twice:
$$
    \frac{V(x,R)}{V(x,r)}
     \lesssim \left(\frac{R}{r}\right)^N \prod_{j=1}^N\left(\frac{x_j+R}{x_j+r}\right)^{2\kappa_j}
     \le \left(\frac{R}{r}\right)^{N+2\sum_j\kappa_j}.
$$
This is \eqref{e:5}.
\end{proof}

\subsection{Weighted annular integrals}

The following estimate is used repeatedly. It is the place where the condition $s>N_\kappa/2$ enters.

\begin{lemma} \label{l:2} 
Let $s>N_\kappa/2$. Then for every $x\in\calC$, every $r>0$, and every $a\ge1$,
\begin{equation}\label{e:6}
        \int_{\{y:\rho(x,y)>ar\}} \left(1+\frac{\rho(x,y)}{r}\right)^{-2s}\,d\omega(y)  \le  C V(x,r)a^{N_\kappa-2s}.
\end{equation}
In particular,
\begin{equation}\label{e:7}
        \int_{\calC} \left(1+\frac{\rho(x,y)}{r}\right)^{-2s}\,d\omega(y) \le C V(x,r).
\end{equation}
\end{lemma}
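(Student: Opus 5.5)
We will prove \eqref{e:6} by a dyadic annular decomposition combined with the doubling estimate \eqref{e:5} of Lemma \ref{l:1}; then \eqref{e:7} is the case $a=1$ plus the contribution of the ball $B(x,r)$.

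\textbf{Step 1: annular decomposition.} Fix $x\in\calC$, $r>0$, $a\ge1$. Split the region $\{y:\rho(x,y)>ar\}$ into the annuli
$$
A_k=\{y\in\calC: 2^{k}ar<\rho(x,y)\le 2^{k+1}ar\},\qquad k\ge0.
$$
On $A_k$ the integrand obeys
$$
\left(1+\frac{\rho(x,y)}{r}\right)^{-2s}\le (2^k a)^{-2s}.
$$
Also $A_k\subset B(x,2^{k+1}ar)$. Since $2^{k+1}a\ge1$, estimate \eqref{e:5} gives
$$
\omega(A_k)\le V(x,2^{k+1}ar)\le C(2^{k+1}a)^{N_\kappa}V(x,r).
$$

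\textbf{Step 2: summation.} Combining the two bounds,
$$
\int_{\{\rho(x,y)>ar\}}\left(1+\frac{\rho(x,y)}{r}\right)^{-2s}d\omega(y)\le C\,2^{N_\kappa}V(x,r)\,a^{N_\kappa-2s}\sum_{k\ge0}2^{k(N_\kappa-2s)}.
$$
The geometric series converges exactly because $2s>N_\kappa$. This yields \eqref{e:6} with a constant depending only on $N,\kappa,s$.

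\textbf{Step 3: the full integral.} Write $\calC=B(x,r)\cup\{\rho(x,y)\ge r\}$.
\begin{itemize}
\item On $B(x,r)$ the integrand is at most $1$, so this part contributes at most $V(x,r)$.
\item The region $\{\rho(x,y)\ge r\}$ differs from $\{\rho(x,y)>r\}$ only by the sphere $\{\rho(x,y)=r\}$. This set has $\omega$-measure zero, since $\omega$ is absolutely continuous with respect to Lebesgue measure. So this part is bounded by \eqref{e:6} with $a=1$, giving at most $CV(x,r)$.
\end{itemize}
Adding the two contributions gives \eqref{e:7}.

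\textbf{Main point.} There is no real obstacle. The only structural input is the upper doubling exponent $N_\kappa$ in \eqref{e:5}, and it is uniform in $x$ even near the walls of $\calC$. This uniformity is what makes the threshold $s>N_\kappa/2$, rather than $s>N/2$, the right condition here.
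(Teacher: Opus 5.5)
Your proof is correct and follows the same route as the paper: dyadic annuli at radii $2^k ar$, the doubling bound \eqref{e:5} giving $\omega(A_k)\lesssim (2^ka)^{N_\kappa}V(x,r)$, a geometric series that converges because $2s>N_\kappa$, and then adding the ball $B(x,r)$ to obtain \eqref{e:7}. Your remark that the sphere $\{\rho(x,y)=r\}$ has $\omega$-measure zero is a small detail the paper leaves implicit.
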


\begin{proof}
Decompose the region $\rho(x,y)>ar$ into annuli
$$
     A_k=\{y:2^k ar<\rho(x,y)\le 2^{k+1}ar\},  \qquad k=0,1,2,\ldots.
$$
On $A_k$,
$$
        \left(1+\frac{\rho(x,y)}{r}\right)^{-2s} \le C(2^k a)^{-2s}.
$$
By the doubling condition,
$$
      \omega(A_k) \le V(x,2^{k+1}ar) \le C(2^k a)^{N_\kappa}V(x,r).
$$
Therefore
$$
      \int_{A_k}\left(1+\frac{\rho(x,y)}{r}\right)^{-2s}\,d\omega(y)  \le C(2^k a)^{N_\kappa-2s}V(x,r).
$$
Since $2s>N_\kappa$, the geometric series in $k$ converges and gives \eqref{e:6}. 
Taking $a=1$, and adding the ball $B(x,r)$, gives \eqref{e:7}.
\end{proof}

\subsection{Chamber lifting and parity} \label{s:4}

We first record the product version of the chamber value lift from \cite{HLLSW-Calderon}.

Let $\Sigma_N=\{\pm1\}^N.$ For $\varepsilon\in\Sigma_N$ and $x\in\calC$, put $\varepsilon x=(\varepsilon_1x_1,\ldots,\varepsilon_Nx_N).$

\begin{definition}\label{d:1} 
For a measurable function $f$ on $\bbR^N$, define
$$
    Uf(x)=\bigl(f(\varepsilon x)\bigr)_{\varepsilon\in\Sigma_N},\qquad x\in\calC.
$$
Thus $Uf$ is a $\bbC^{2^N}$-valued function on $\calC$.
\end{definition}

\begin{lemma} \label{l:3} 
For every $1\le p<\infty$,
\begin{equation}\label{e:8}
        \norm{f}_{L^p(\bbR^N,d\omega)}^p =\int_{\calC}\sum_{\varepsilon\in\Sigma_N}
        |f(\varepsilon x)|^p\,d\omega(x) =\norm{Uf}_{L^p(\calC,d\omega;\ell^p(\Sigma_N))}^p.
\end{equation}
For $p=\infty$,
$$
    \norm{f}_{L^\infty(\bbR^N)}=\norm{Uf}_{L^\infty(\calC;\ell^\infty(\Sigma_N))}.
$$
\end{lemma}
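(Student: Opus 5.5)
The plan is to decompose $\bbR^N$, up to a null set, into the reflected chambers $\varepsilon\calC$, $\varepsilon\in\Sigma_N$, and then change variables chamber by chamber. The coordinate hyperplanes $\{x_j=0\}$ have Lebesgue measure zero. Since $d\omega$ is absolutely continuous with respect to Lebesgue measure by \eqref{e:1}, they are also $\omega$-null. Every point off these hyperplanes has all coordinates nonzero, so it lies in exactly one set $\varepsilon\calC=\{x:\varepsilon_jx_j>0\hbox{ for all }j\}$, namely the one with $\varepsilon_j=\operatorname{sign}x_j$. The sets $\varepsilon\calC$ are pairwise disjoint. This gives
$$
\int_{\bbR^N}|f|^p\,d\omega=\sum_{\varepsilon\in\Sigma_N}\int_{\varepsilon\calC}|f(z)|^p\,d\omega(z).
$$

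Next I will substitute $z=\varepsilon x$ with $x\in\calC$ in each term. The map $x\mapsto\varepsilon x$ is a linear isometry with $|\det|=1$, so $dz=dx$. The weight satisfies $\prod_j|\varepsilon_jx_j|^{2\kappa_j}=\prod_jx_j^{2\kappa_j}$, which means $\omega$ is invariant under $\varepsilon$. Each term therefore equals $\int_{\calC}|f(\varepsilon x)|^p\,d\omega(x)$. Tonelli's theorem lets us exchange the finite sum and the integral, since the integrands are nonnegative and measurable. This yields the middle expression in \eqref{e:8}. The last equality in \eqref{e:8} is just the definition of the $L^p(\ell^p(\Sigma_N))$ norm applied to $Uf$ from Definition \ref{l:3}'s preceding Definition \ref{d:1}.

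For $p=\infty$ the same decomposition applies. A set $E\subset\bbR^N$ is null if and only if each $\varepsilon^{-1}(E\cap\varepsilon\calC)\subset\calC$ is null, because the reflections preserve null sets. Hence $|f|\le t$ a.e. on $\bbR^N$ exactly when $|f(\varepsilon x)|\le t$ for a.e. $x\in\calC$ and every $\varepsilon$. Since there are finitely many $\varepsilon$, this is equivalent to $\max_\varepsilon|f(\varepsilon x)|\le t$ for a.e. $x\in\calC$. Taking the infimum over $t$ gives the equality of norms.

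There is no real obstacle here. The only points needing care are that the walls are $\omega$-null and that the weight is invariant under each reflection, and both are immediate from \eqref{e:1}.
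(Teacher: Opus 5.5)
Your proposal is correct and follows the paper's proof: split $\bbR^N$ into the reflected chambers $\varepsilon\calC$, which are disjoint up to the $\omega$-null coordinate hyperplanes, change variables using the sign-invariance of $d\omega$, and sum over $\varepsilon$, with $p=\infty$ handled by the same decomposition. You simply supply more detail than the paper (the unit Jacobian, the exchange of sum and integral, and the null-set bookkeeping for the essential supremum).
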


\begin{proof}
The sets $\varepsilon\calC$, $\varepsilon\in\Sigma_N$, are disjoint up to coordinate  hyperplanes, and those hyperplanes have $\omega$-measure zero. 
Since the weight $d\omega$ is invariant under sign changes,
$$
\int_{\varepsilon\calC}|f(x)|^p\,d\omega(x) =\int_{\calC}|f(\varepsilon x)|^p\,d\omega(x).
$$
Summing over $\varepsilon$ gives \eqref{e:8}. The $p=\infty$ statement follows in the same way.
\end{proof}

\begin{remark}
The lift does not assume that $f$ is even, radial, or invariant under the reflection group. 
If $f$ is arbitrary,  the vector $Uf(x)$ is arbitrary in $\bbC^{2^N}$. Radiality corresponds   only to a small subspace of this vector-valued space. 
The construction retains the whole vector.
\end{remark}

We next pass from sign values to Walsh parity coordinates.

The value lift records the values of $f$ on all chambers. It is often more  convenient to replace the sign-value basis by the parity basis.

For $\alpha=(\alpha_1,\ldots,\alpha_N)\in\{0,1\}^N$ and $\varepsilon\in\Sigma_N$, write
$$
\varepsilon^\alpha =\varepsilon_1^{\alpha_1}\cdots\varepsilon_N^{\alpha_N}.
$$

\begin{definition} \label{d:2} 
For $f$ on $\bbR^N$, define  its parity pieces on $\calC$ by
\begin{equation}\label{e:9}
        f_\alpha(x) =2^{-N}\sum_{\varepsilon\in\Sigma_N} \varepsilon^\alpha f(\varepsilon x), \qquad \alpha\in\{0,1\}^N.
\end{equation}
\end{definition}

The inverse formula is
\begin{equation}\label{e:10}
        f(\varepsilon x)=\sum_{\alpha\in\{0,1\}^N}\varepsilon^\alpha f_\alpha(x).
\end{equation}
This is   the finite Walsh transform on the group $\Sigma_N$.

\begin{lemma}
\label{l:4} For every $1\le p\le\infty$,
$$
\norm{Uf}_{L^p(\calC;\ell^p)}
        \simeq_{p,N}
        \norm{(f_\alpha)_\alpha}_{L^p(\calC;\ell^p)}.
$$
For $p=2$, the equality is   exact up to the constant $2^{N/2}$:
$$
\sum_{\varepsilon\in\Sigma_N}|f(\varepsilon x)|^2 =2^N\sum_{\alpha\in\{0,1\}^N}|f_\alpha(x)|^2.
$$
\end{lemma}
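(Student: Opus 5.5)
The plan is to view the passage from $Uf$ to the parity vector $(f_\alpha)_\alpha$ as a single fixed linear map acting pointwise in $x\in\calC$. Let $H$ be the $2^N\times2^N$ matrix with entries $H_{\varepsilon,\alpha}=\varepsilon^\alpha$, indexed by $\varepsilon\in\Sigma_N$ and $\alpha\in\{0,1\}^N$. Then \eqref{e:10} says $Uf(x)=H\,(f_\alpha(x))_\alpha$, and \eqref{e:9} says $(f_\alpha(x))_\alpha=2^{-N}H^{T}Uf(x)$.

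The first step is the orthogonality relation of the Walsh characters. Writing
$$
\sum_{\varepsilon\in\Sigma_N}\varepsilon^{\alpha}\varepsilon^{\beta}=\prod_{j=1}^N\sum_{\varepsilon_j=\pm1}\varepsilon_j^{\alpha_j+\beta_j},
$$
each factor equals $2$ when $\alpha_j=\beta_j$ and $0$ otherwise, so $H^TH=2^N I$. Since $H$ is square, $H$ is invertible with $H^{-1}=2^{-N}H^T$, so $2^{-N/2}H$ is orthogonal. This confirms that \eqref{e:9} and \eqref{e:10} are mutually inverse. For each fixed $x$ it also gives the exact identity
$$
\sum_\varepsilon|f(\varepsilon x)|^2=|H(f_\alpha(x))_\alpha|^2=2^N\sum_\alpha|f_\alpha(x)|^2,
$$
which is the $p=2$ statement.

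For general $1\le p\le\infty$, we use that all norms on the finite-dimensional space $\bbC^{2^N}$ are equivalent. More concretely, every entry of $H$ has modulus one. Hence each component of $Uf(x)$ is bounded by $\sum_\alpha|f_\alpha(x)|$, and each component of $(f_\alpha(x))_\alpha$ is bounded by $2^{-N}\sum_\varepsilon|f(\varepsilon x)|$. By Hölder's inequality on $2^N$ terms this yields, pointwise in $x$,
$$
\|Uf(x)\|_{\ell^p}\le 2^{N}\|(f_\alpha(x))_\alpha\|_{\ell^p},
$$
and the reverse inequality with a constant depending only on $N$. Raising these to the $p$-th power and integrating over $\calC$ against $d\omega$ gives the claimed equivalence for $p<\infty$. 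Taking the essential supremum over $x$ gives the case $p=\infty$.

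There is no real obstacle here. The only point needing care is the character orthogonality, which factorizes coordinatewise as shown.
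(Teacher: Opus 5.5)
Your proof is correct and follows the same route as the paper. In both, the map $Uf(x)\mapsto(f_\alpha(x))_\alpha$ is a fixed invertible Walsh matrix, so equivalence of norms on $\bbC^{2^N}$ gives the $L^p$ comparison, and orthogonality of $2^{-N/2}H$ gives the exact $p=2$ identity; you additionally make the character orthogonality and the constants explicit (in fact independent of $p$), which the paper leaves implicit.
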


\begin{proof}
For each fixed $x$, the map
$$
     (f(\varepsilon x))_{\varepsilon\in\Sigma_N} \longmapsto (f_\alpha(x))_{\alpha\in\{0,1\}^N}
$$
is multiplication by a fixed invertible $2^N\times2^N$ matrix. All norms on the finite-dimensional space $\bbC^{2^N}$ are equivalent. 
This gives the first statement after integration in $x$. 
For $p=2$, the Walsh matrix is orthogonal after normalization by $2^{-N/2}$, giving the displayed equality.
\end{proof}

On the full space the same formula gives the usual parity projections.

The formula \eqref{e:9} is the chamber version of the usual parity projection. Define
$$
     P_\alpha f(x) =2^{-N}\sum_{\varepsilon\in\Sigma_N} \varepsilon^\alpha f(\varepsilon x).
$$
Then $P_\alpha f$ satisfies
$$
        P_\alpha f(\varepsilon x)=\varepsilon^\alpha P_\alpha f(x).
$$
Moreover
$$
        f=\sum_{\alpha\in\{0,1\}^N}P_\alpha f.
$$
The restriction of $P_\alpha f$ to $\calC$ is exactly $f_\alpha$.

\subsection{The product Dunkl transform in parity coordinates}
\label{s:5}

We recall the one-dimensional split.

We first recall the one-dimensional structure. Let $\kappa\ge0$. The one-dimensional Dunkl kernel has the form
\begin{equation}\label{e:11}
        E_\kappa(-ix,\xi) =j_{\kappa-1/2}(x\xi) - i\frac{x\xi}{2\kappa+1}j_{\kappa+1/2}(x\xi),
\end{equation}
where $j_\nu$ is the normalized Bessel function
$$
j_\nu(t)=2^\nu\Gamma(\nu+1)\frac{J_\nu(t)}{t^\nu}.
$$
The first term in \eqref{e:11} is even in both variables, and the second term is odd in both variables.

Let $f=f_0+f_1$ be the even-odd decomposition. On $(0,\infty)$, write
$$
    f_0(x)=F_0(x), \qquad f_1(x)=xF_1(x).
$$
Then the even part of the transform is a Hankel transform of order $\kappa-1/2$,  while the odd part is $-i\xi$ times a Hankel transform of order $\kappa+1/2$ applied to $F_1$. 
The exact constants depend only on $\kappa$. They are harmless in all $L^p$ estimates below.

This identity gives rise to the chamber matrix. A multiplier which is not even in $\xi$ mixes the even and odd components.

The tensor product of the one-dimensional split gives the product parity components.

For $\alpha\in\{0,1\}^N$, put
$$
        x^\alpha=x_1^{\alpha_1}\cdots x_N^{\alpha_N}.
$$
If $g$ belongs to the $\alpha$-parity component, then on $\calC$ we write
$$
    g(x)=x^\alpha G(x), \qquad G(x)=x^{-\alpha}g(x).
$$
The shifted Hankel measure is
\begin{equation}\label{e:12}
        d\omega_\alpha(x)  =x^{2\alpha}\,d\omega(x) =\prod_{j=1}^N x_j^{2\kappa_j+2\alpha_j}\,dx.
\end{equation}
Let $\calH_\alpha$ be the product Hankel transform with one-dimensional orders
$$
    \kappa_j+\alpha_j-1/2, \qquad j=1,\ldots,N.
$$
Then $\calH_\alpha$ is unitary on $L^2(\calC,d\omega_\alpha)$. Hence the conjugated transform
\begin{equation}\label{e:13}
        \calD_\alpha g(R) =R^\alpha\,\calH_\alpha(x^{-\alpha}g)(R)
\end{equation}
is unitary on $L^2(\calC,d\omega)$, since
\begin{equation}\label{e:14}
        \|g\|_{L^2(d\omega)}^2 =\int_\calC |x^{-\alpha}g(x)|^2\,d\omega_\alpha(x).
\end{equation}
All harmless normalizing constants from the one-dimensional Dunkl transform are absorbed into the definition of $\calD_\alpha$. 
With this convention the product Dunkl transform is a direct sum of the transforms $\calD_\alpha$ in Walsh parity coordinates.

\begin{proposition}\label{p:1}
Let $f\in L^2(\bbR^N,d\omega)$ and let $f_\alpha=(P_\alpha f)|_\calC$. 
If $(\calW U f)(x)=(f_\alpha(x))_\alpha,$ then the Walsh parity coordinates of the Dunkl transform are
\begin{equation}\label{e:15}
        (\calW U\Fkap f)_\alpha(R)=\calD_\alpha f_\alpha(R), \qquad R\in\calC.
\end{equation}
Consequently
\begin{equation}\label{e:16}
        \|f\|_{L^2(\bbR^N,d\omega)}^2 \simeq_{N,\kappa}\sum_{\alpha\in\{0,1\}^N} \|\calD_\alpha f_\alpha\|_{L^2(\calC,d\omega)}^2,
\end{equation}
and, with the normalization fixed above, the equivalence is equality up to the same finite Walsh factor as in Lemma \ref{l:4}.
\end{proposition}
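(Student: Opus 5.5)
The plan is to reduce everything to the one-dimensional even/odd split \eqref{e:11} and then tensorize. The product Dunkl kernel factors as $E_\kappa(-ix,\xi)=\prod_{j=1}^N E_{\kappa_j}(-ix_j,\xi_j)$. Expanding each factor by \eqref{e:11} into its even and odd parts writes
$$
E_\kappa(-ix,\xi)=\sum_{\beta\in\{0,1\}^N}\prod_{j=1}^N e_{\beta_j}^{(j)}(x_j\xi_j).
$$
Here $e_0^{(j)}(t)=j_{\kappa_j-1/2}(t)$ and $e_1^{(j)}(t)=-\frac{it}{2\kappa_j+1}j_{\kappa_j+1/2}(t)$. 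The $\beta$-th product is of parity $\beta$ separately in $x$ and in $\xi$, meaning it is multiplied by $\varepsilon^\beta$ under $x\mapsto\varepsilon x$ or $\xi\mapsto\varepsilon\xi$.

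First, compute $\Fkap(P_\alpha f)$. Insert $f=\sum_\alpha P_\alpha f$ into \eqref{e:3} and split $\bbR^N=\bigcup_\varepsilon\varepsilon\calC$. The sign invariance of $d\omega$ (as in Lemma \ref{l:3}) turns the integral over $\varepsilon\calC$ into one over $\calC$, with the factor $\varepsilon^\alpha\varepsilon^\beta$ coming from the parities. Summing over $\varepsilon$ gives the orthogonality relation $\sum_\varepsilon\varepsilon^{\alpha+\beta}=2^N\delta_{\alpha\beta}$, so only $\beta=\alpha$ survives. Hence
$$
\Fkap(P_\alpha f)(\xi)=2^Nc_\kappa\int_\calC f_\alpha(x)\prod_j e^{(j)}_{\alpha_j}(x_j\xi_j)\,d\omega(x).
$$
This function has parity $\alpha$ in $\xi$. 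It follows that $P_\alpha\Fkap f=\Fkap P_\alpha f$, so the $\alpha$-th Walsh coordinate of $U\Fkap f$ is the restriction of the display to $\xi=R\in\calC$.

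Second, identify this restriction with $\calD_\alpha f_\alpha$. On $\calC$, write $f_\alpha=x^\alpha F_\alpha$. In each odd coordinate ($\alpha_j=1$), the kernel factor contributes $x_j\xi_j\,j_{\kappa_j+1/2}(x_j\xi_j)$ up to the constant $-i/(2\kappa_j+1)$. The factor $x_j$ combines with $x_j^{2\kappa_j}$ and the $x_j$ from $f_\alpha$ to give the shifted measure $x_j^{2\kappa_j+2}dx_j$ of \eqref{e:12}. The factor $\xi_j$ stays outside as $R^\alpha$. What remains is exactly the product Hankel transform $\calH_\alpha$ of orders $\kappa_j+\alpha_j-1/2$ applied to $F_\alpha=x^{-\alpha}f_\alpha$, multiplied by $R^\alpha$. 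By the convention adopted just before the proposition, the constants ($2^N c_\kappa$, the powers of $-i/(2\kappa_j+1)$, and the normalizations of the Hankel transforms) are absorbed into $\calD_\alpha$. This gives \eqref{e:15}.

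Third, \eqref{e:16} follows from Plancherel for $\Fkap$, then Lemma \ref{l:3} and Lemma \ref{l:4} applied to $\Fkap f$ (the $p=2$ Walsh identity, with factor $2^N$), and finally \eqref{e:15}. Consistency with the unitarity of $\calD_\alpha$ from \eqref{e:13}–\eqref{e:14} yields the claimed equality up to the Walsh factor.

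The main obstacle is bookkeeping rather than analysis. One must check that the constants absorbed into $\calD_\alpha$ are compatible with the unitarity of $\calH_\alpha$ on $L^2(\calC,d\omega_\alpha)$; if they are not, the last assertion holds only up to a constant depending on $N,\kappa$. One must also justify the computations for general $f\in L^2$. I would first verify both points on $f\in L^1\cap L^2$ (or the Schwartz class), where the integrals converge absolutely and Fubini applies, and then extend by density using the unitarity of both sides.
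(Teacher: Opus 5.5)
Your proposal is correct and follows essentially the same route as the paper: the one-dimensional even/odd split \eqref{e:11} tensorized over coordinates, with the factor $x_j\xi_j$ shifting $x_j^{2\kappa_j}\,dx_j$ to $x_j^{2\kappa_j+2}\,dx_j$ and producing $R^\alpha$, followed by Plancherel and the Walsh identity of Lemma \ref{l:4}. You are simply more explicit about the character orthogonality, the commutation $P_\alpha\Fkap=\Fkap P_\alpha$, and the density argument. Your worry about the absorbed constants resolves itself. Unitarity of $\Fkap$ and of $\calH_\alpha$ makes both $f_\alpha\mapsto(\Fkap P_\alpha f)|_\calC$ and $f_\alpha\mapsto R^\alpha\calH_\alpha(x^{-\alpha}f_\alpha)$ isometries on $L^2(\calC,d\omega)$. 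Hence the absorbed constant is unimodular, and the equality holds exactly up to the Walsh factor $2^N$.
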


\begin{proof}
In one dimension, \eqref{e:11} gives
$$
        E_\kappa(-ix,\xi) =E_{\kappa}^{(0)}(x\xi)+E_{\kappa}^{(1)}(x,\xi),
$$
where $E_{\kappa}^{(0)}$ is even in $(x,\xi)$ and $E_{\kappa}^{(1)}$ is odd in both variables. If $g_0$ is even and $g_1(x)=xG_1(x)$ is odd, then
$$
\Fkap g_0(\xi)\big|_{\xi>0} = c_{0,\kappa}\,\calH_0(g_0|_{(0,\infty)})(\xi)\quad\text{and}\quad
        \Fkap g_1(\xi)\big|_{\xi>0}= c_{1,\kappa}\,\xi\,\calH_1G_1(\xi),
$$
where $\calH_0$ and $\calH_1$ have orders $\kappa-1/2$ and $\kappa+1/2$, respectively. 
The factor $x\xi$ in the odd kernel changes $x^{2\kappa}\,dx$ into $x^{2\kappa+2}\,dx$ on the domain side and produces the factor $\xi$ on the range side.
Tensorizing this identity over $j=1,\ldots,N$ gives \eqref{e:15}. 
Plancherel theorem  for $\Fkap$, Lemma \ref{l:4}, and the unitarity of each $\calH_\alpha$ give \eqref{e:16}.
\end{proof}

We now record the finite Walsh algebra of a scalar symbol.

Let $m\in L^\infty(\bbR^N)$. For $R\in\calC$, define
\begin{equation}\label{e:17}
        m_\eta(R) =2^{-N}\sum_{\varepsilon\in\Sigma_N}\varepsilon^\eta m(\varepsilon R), \qquad \eta\in\{0,1\}^N.
\end{equation}
Then
\begin{equation}\label{e:18}
        m(\varepsilon R)=
        \sum_{\eta\in\{0,1\}^N}\varepsilon^\eta m_\eta(R).
\end{equation}

\begin{lemma}\label{l:5}
Let $h$ be a frequency-side function on $\bbR^N$, and suppose that in Walsh coordinates
$$
     h(\varepsilon R)=\sum_{\beta\in\{0,1\}^N}\varepsilon^\beta h_\beta(R), \qquad R\in\calC.
$$
Then the Walsh coordinates of $mh$ are
\begin{equation}\label{e:19}
        (mh)_\alpha(R)
        =\sum_{\beta\in\{0,1\}^N}
        m_{\alpha+\beta}(R)h_\beta(R),
\end{equation}
where $\alpha+\beta$ is taken modulo two in each coordinate. Equivalently, multiplication by $m$ is multiplication on $\calC$ by the finite matrix
\begin{equation}\label{e:20}
        \mathfrak m_{\alpha\beta}(R)=m_{\alpha+\beta}(R).
\end{equation}
\end{lemma}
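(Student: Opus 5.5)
The plan is to compute the Walsh coordinates of $mh$ directly from the definition \eqref{e:17}, using the expansions \eqref{e:18} for $m$ and the assumed expansion for $h$. This is a purely pointwise identity for each fixed $R\in\calC$, a finite convolution on the group $\bbZ_2^N$. No analytic input is needed beyond measurability of the pieces.

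First, for fixed $R\in\calC$ and $\varepsilon\in\Sigma_N$, multiply the two expansions:
$$
m(\varepsilon R)h(\varepsilon R)=\sum_{\gamma,\beta\in\{0,1\}^N}\varepsilon^{\gamma}\varepsilon^{\beta}m_\gamma(R)h_\beta(R).
$$
Since $\varepsilon_j^2=1$, we have $\varepsilon^\gamma\varepsilon^\beta=\varepsilon^{\gamma+\beta}$ with the sum taken mod two coordinatewise.

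Second, apply the Walsh projection \eqref{e:17} to $mh$:
$$
(mh)_\alpha(R)=2^{-N}\sum_{\varepsilon}\varepsilon^\alpha m(\varepsilon R)h(\varepsilon R)=\sum_{\gamma,\beta}m_\gamma(R)h_\beta(R)\,2^{-N}\sum_\varepsilon\varepsilon^{\alpha+\gamma+\beta}.
$$
Then use the character orthogonality $2^{-N}\sum_{\varepsilon\in\Sigma_N}\varepsilon^{\delta}=\prod_{j}\tfrac12(1+(-1)^{\delta_j})$. This equals $1$ if $\delta=0$ mod two and $0$ otherwise, as one sees by factoring the sum over coordinates. The surviving terms are those with $\gamma=\alpha+\beta$ mod two, which gives \eqref{e:19}. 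Reading \eqref{e:19} as a matrix acting on the vector $(h_\beta(R))_\beta$ gives \eqref{e:20}.

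The only point requiring a little care, rather than a genuine obstacle, is the mod-two bookkeeping: $\varepsilon^{\alpha}\varepsilon^{\gamma}\varepsilon^{\beta}=\varepsilon^{\alpha+\gamma+\beta \bmod 2}$, and $\gamma=\alpha+\beta$ is equivalent to $\alpha+\beta+\gamma\equiv 0$ because every element of $\bbZ_2^N$ is its own inverse. For $m\in L^\infty$ the identities hold for a.e.\ $R$, and all sums are finite, so no convergence issue arises.
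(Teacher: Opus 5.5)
Your proposal is correct and follows the paper's proof essentially verbatim: expand $m(\varepsilon R)h(\varepsilon R)$ using \eqref{e:18} and the assumed expansion of $h$, take the $\alpha$-th Walsh coefficient, and use that $2^{-N}\sum_{\varepsilon}\varepsilon^{\alpha+\gamma+\beta}$ equals $1$ exactly when $\gamma=\alpha+\beta$ modulo two. Your coordinatewise factorization of the character sum simply spells out the orthogonality fact the paper states without proof.
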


\begin{proof}
Using \eqref{e:18},
$$
    m(\varepsilon R)h(\varepsilon R) =\sum_{\eta,\beta}\varepsilon^{\eta+\beta} m_\eta(R)h_\beta(R).
$$
Taking the $\alpha$-Walsh coefficient gives
$$
      (mh)_\alpha(R) =2^{-N}\sum_{\varepsilon\in\Sigma_N}  \varepsilon^\alpha m(\varepsilon R)h(\varepsilon R)
      =\sum_{\eta,\beta}m_\eta(R)h_\beta(R) 2^{-N}\sum_{\varepsilon\in\Sigma_N} \varepsilon^{\alpha+\eta+\beta}
      =\sum_{\beta}m_{\alpha+\beta}(R)h_\beta(R),
$$
because the finite Walsh average is $1$ when $\alpha+\eta+\beta=0$ and $0$ otherwise.
\end{proof}

\begin{remark}\label{r:1}
If $m$ is invariant under all sign changes, then $m_\eta=0$ for every $\eta\ne0$. Hence $\mathfrak m_{\alpha\beta}=0$ unless $\alpha=\beta$. 
Radial spectral multipliers form a subcase of this diagonal chamber situation. 
A scalar symbol with non-zero higher Walsh pieces produces off-diagonal entries; these entries are the components missed by a radial-function argument.
\end{remark}

Combining the parity decomposition and the Walsh matrix yields the lifted multiplier.

Let
$$
\mathcal U f=(\calD_\alpha f_\alpha)_\alpha, \qquad f_\alpha=(P_\alpha f)|_\calC.
$$
By Proposition \ref{p:1}, $\mathcal U$ is the Dunkl transform written in chamber Walsh coordinates. 
Therefore the multiplier $T_m=\Fkap^{-1}M_m\Fkap$ becomes
\begin{equation}\label{e:21}
        \mathbb T_m =\mathcal U^{-1}M_{\mathfrak m}\mathcal U,
        \qquad  (M_{\mathfrak m}H)_\alpha(R) =\sum_\beta m_{\alpha+\beta}(R)H_\beta(R).
\end{equation}
Equivalently, the entry from the $\beta$-source component to the $\alpha$-target component is
\begin{equation}\label{e:22}
        \mathbb T_{\alpha\beta} =\calD_\alpha^{-1}M_{m_{\alpha+\beta}}\calD_\beta.
\end{equation}
Thus
\begin{equation}\label{e:23}
        \|\mathbb T_{\alpha\beta}\|_{L^2(\calC,d\omega)\to L^2(\calC,d\omega)} \le \|m_{\alpha+\beta}\|_{L^\infty(\calC)}.
\end{equation}
The full $L^2$ norm is better read before taking entries. 
For each fixed $R$, the Walsh matrix $\mathfrak m(R)$ is unitarily conjugate to the diagonal matrix
$$
        \operatorname{diag}\{m(\varepsilon R):\varepsilon\in\Sigma_N\}.
$$
Therefore
\begin{equation}\label{e:24}
        \|M_{\mathfrak m}\|_{L^2(\calC;\ell^2)\to L^2(\calC;\ell^2)} =\operatorname*{ess\,sup}_{R\in\calC}
          \max_{\varepsilon\in\Sigma_N}|m(\varepsilon R)| =\|m\|_\infty.
\end{equation}
By \eqref{e:21}, this is exactly the ordinary Dunkl Plancherel estimate for $T_m$.

For dyadic estimates, fix $\psi_j(r)=\psi(2^{-j}r)$ and set
\begin{equation}\label{e:25}
        \mathbb T_{\alpha\beta,j}  =\calD_\alpha^{-1}M_{\psi_j m_{\alpha+\beta}}\calD_\beta.
\end{equation}
Its off-diagonal kernel is denoted by $K_j^{\alpha\beta}(x,y)$:
$$
     \mathbb T_{\alpha\beta,j}F(x) =\int_\calC K_j^{\alpha\beta}(x,y)F(y)\,d\omega(y), \qquad x\notin\operatorname{supp}F.
$$
The scalar verification in Section \ref{s:9} consists in estimating the kernels generated by \eqref{e:25}, 
with the symbol $\psi_jm_{\alpha+\beta}$.


\section{Chamber H\"ormander estimates and matrix Calder\'on--Zygmund theory}
\label{s:6}

\subsection{Dyadic decomposition}

Fix $\psi\in C_c^\infty((1/2,2))$, $\psi\ge0$, with
$$
   \sum_{j\in\bbZ}\psi(2^{-j}r)=1, \qquad r>0.
$$
Set $\psi_j(r)=\psi(2^{-j}r)$ and $r_j=2^{-j}$. The homogeneous dyadic pieces of the lifted multiplier are
$$
\mathbb T_{m,j}:\quad \text{symbol } \psi_j(|\xi|)m(\xi),
        \qquad \mathbb T_m=\text{$L^2$-}\lim_{J\to\infty}\sum_{|j|\le J}\mathbb T_{m,j}.
$$
There is no free low-frequency term in this convention. 
If one replaces this by an inhomogeneous partition, the multiplier supported near the origin must satisfy the same kernel estimates, or an additional smoothness condition. 
A compactly supported $L^\infty$ symbol near $0$ is not by itself an $L^p$ multiplier.

For an entry of the chamber matrix, write the off-diagonal kernel as
$$
\mathbb T_{\alpha\beta,j}F(x) =\int_\calC K_j^{\alpha\beta}(x,y)F(y)\,d\omega(y),  \qquad x\notin\operatorname{supp}F.
$$
All entries are measured with respect to the same chamber measure $d\omega$.

\subsection{Definition of the \texorpdfstring{$\CH^2_{s,\eta}$}{CH2seta} condition}

The next condition is the dyadic condition used in the paper.  It consists of two weighted $L^2$ size estimates and two first-difference estimates. 
The threshold $s>N_\kappa/2$ enters only when these estimates are converted into the ordinary integral H\"ormander condition.

\begin{definition}
\label{d:3} 
Let $s>0$ and $0<\eta\le1$. We say that $\mathbb T_m\in\CH^2_{s,\eta}$ if there is a constant $A$ such that, for all $j\in\bbZ$, 
all $\alpha,\beta\in\{0,1\}^N$, and all admissible points $x,y,x',y'\in\calC$,
\begin{equation}\label{e:26}
        V(y,r_j)^{1/2}
        \left( \int_\calC \left(1+\frac{\rho(x,y)}{r_j}\right)^{2s}  |K_j^{\alpha\beta}(x,y)|^2\,d\omega(x) \right)^{1/2}\le A,
\end{equation}
\begin{equation}\label{e:27}
        V(x,r_j)^{1/2}
        \left( \int_\calC  \left(1+\frac{\rho(x,y)}{r_j}\right)^{2s} |K_j^{\alpha\beta}(x,y)|^2\,d\omega(y)   \right)^{1/2}\le A,
\end{equation}
\begin{align}
        &V(y,r_j)^{1/2} \left( \int_\calC \left(1+\frac{\rho(x,y)}{r_j}\right)^{2s}  |K_j^{\alpha\beta}(x,y)-K_j^{\alpha\beta}(x,y')|^2
        \,d\omega(x) \right)^{1/2}       \notag\\
        & \le A\left(\frac{\rho(y,y')}{r_j}\right)^\eta,  \qquad \rho(y,y')\le r_j \label{e:28},
\end{align}
\begin{align}
        &V(x,r_j)^{1/2} \left(\int_\calC \left(1+\frac{\rho(x,y)}{r_j}\right)^{2s} |K_j^{\alpha\beta}(x,y)-K_j^{\alpha\beta}(x',y)|^2 \,d\omega(y)\right)^{1/2}       \notag\\
        & \le A\left(\frac{\rho(x,x')}{r_j}\right)^\eta, \qquad \rho(x,x')\le r_j \label{e:29}.
\end{align}
Finally, the dyadic kernels must represent the off-diagonal kernel. Equivalently, for bounded compactly supported $F$ and $x\notin\operatorname{supp}F$,
$$
     \sum_{|j|\le J}\int_\calC K_j^{\alpha\beta}(x,y)F(y)\,d\omega(y) \longrightarrow \int_\calC K^{\alpha\beta}(x,y)F(y)\,d\omega(y),
$$
where $K^{\alpha\beta}(x,y)=\sum_{j\in\bbZ}K_j^{\alpha\beta}(x,y)$ for $x\ne y$. 
Equivalently, one may prove the estimates for finite dyadic truncations with constants independent of the truncation and then pass to the $L^2$ limit. 
The least admissible $A$, after the maximum over all matrix entries is taken, is written $\|\mathbb T_m\|_{\CH^2_{s,\eta}}$.
\end{definition}

\begin{remark}
The condition is imposed on kernels, not merely on the scalar symbol. 
After the chamber lift, a non-radial multiplier becomes a finite matrix of Hankel operators between parity components. 
The Calder\'on--Zygmund argument uses these matrix kernels.
\end{remark}

\subsection{\texorpdfstring{Dyadic $L^2$ estimates imply $L^1$ estimates}{Dyadic L2 estimates imply L1 estimates}}

\begin{lemma}
\label{l:6} 
Let $s>N_\kappa/2$. If \eqref{e:26} holds for $K_j$, then, for every $y\in\calC$,
\begin{equation}\label{e:30}
        \int_\calC |K_j(x,y)|\,d\omega(x)\le CA,
\end{equation}
and, for every $a\ge1$,
\begin{equation}\label{e:31}
        \int_{\rho(x,y)>a r_j}|K_j(x,y)|\,d\omega(x)  \le CAa^{N_\kappa/2-s}.
\end{equation}
The estimates in the other variable follow from \eqref{e:27}.
\end{lemma}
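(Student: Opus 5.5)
The plan is a Cauchy--Schwarz argument against the weight $(1+\rho(x,y)/r_j)^{2s}$, with Lemma \ref{l:2} supplying the integrability of the reciprocal weight. Fix $y\in\calC$ and $j$. For \eqref{e:31}, write $|K_j(x,y)|$ as the product of $(1+\rho(x,y)/r_j)^{s}|K_j(x,y)|$ and $(1+\rho(x,y)/r_j)^{-s}$, and apply Cauchy--Schwarz in $d\omega(x)$ over the region $\{\rho(x,y)>ar_j\}$. The first factor is bounded by the full integral in \eqref{e:26}, which is at most $A\,V(y,r_j)^{-1/2}$. The second factor is
$$\Bigl(\int_{\rho(x,y)>ar_j}\Bigl(1+\frac{\rho(x,y)}{r_j}\Bigr)^{-2s}d\omega(x)\Bigr)^{1/2}.$$
By \eqref{e:6} of Lemma \ref{l:2}, applied with centre $y$ and radius $r_j$, it is at most $C\,V(y,r_j)^{1/2}a^{(N_\kappa-2s)/2}$. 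Here we use $s>N_\kappa/2$ and that $\rho$ is symmetric.

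Multiplying the two factors, the volume factors $V(y,r_j)^{\pm1/2}$ cancel. What remains is $CA\,a^{N_\kappa/2-s}$, which is \eqref{e:31}.

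For \eqref{e:30}, run the same Cauchy--Schwarz over all of $\calC$. This time bound the weight integral by \eqref{e:7}, which gives at most $C\,V(y,r_j)^{1/2}$, and again the volume factors cancel. Alternatively, split into the ball $B(y,r_j)$ and its complement: on the ball the weight is comparable to $1$ and we use $\omega(B(y,r_j))=V(y,r_j)$, while the complement is \eqref{e:31} with $a=1$.

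The estimates in the other variable are identical. Fix $x$, integrate in $d\omega(y)$, use \eqref{e:27}, and apply Lemma \ref{l:2} centred at $x$.

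There is no genuine obstacle here. The only point needing care is that the volume normalisation in \eqref{e:26} is at the fixed point $y$, which is exactly the centre in Lemma \ref{l:2}, so the cancellation is exact and no doubling comparison between $V(x,r_j)$ and $V(y,r_j)$ is needed.
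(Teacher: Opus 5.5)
Your proposal is correct and follows the paper's proof: Cauchy--Schwarz against the weight $(1+\rho(x,y)/r_j)^{s}$ over $E=\calC$ and over $E=\{\rho(x,y)>ar_j\}$, with the first factor bounded via \eqref{e:26} and the second via Lemma \ref{l:2}, so that the factors $V(y,r_j)^{\pm1/2}$ cancel. The other variable is handled the same way using \eqref{e:27}.
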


\begin{proof}
Set $W_j(x,y)=(1+\rho(x,y)/r_j)^s$. For every measurable $E\subset\calC$, Cauchy--Schwarz gives
\begin{equation}\label{e:32}
        \int_E |K_j(x,y)|\,d\omega(x) \le  \left(\int_E W_j(x,y)^2|K_j(x,y)|^2\,d\omega(x)\right)^{1/2} \left(\int_E W_j(x,y)^{-2}\,d\omega(x)\right)^{1/2}.
\end{equation}
For $E=\calC$, the first factor is $\le AV(y,r_j)^{-1/2}$ and the second is $\le CV(y,r_j)^{1/2}$ by Lemma \ref{l:2}. 
This proves \eqref{e:30}. If $E=\{x:\rho(x,y)>ar_j\}$, then
$$
\int_E W_j(x,y)^{-2}\,d\omega(x) \le CV(y,r_j)a^{N_\kappa-2s},
$$
again by Lemma \ref{l:2}; inserting this into \eqref{e:32} gives \eqref{e:31}.
\end{proof}

\begin{lemma}
\label{l:7} Let $s>N_\kappa/2$ and $0<\eta\le1$. If \eqref{e:28} holds and $\rho(y,y')\le
r_j$, then
\begin{equation}\label{e:33}
        \int_\calC |K_j(x,y)-K_j(x,y')|\,d\omega(x)
        \le CA\left(\frac{\rho(y,y')}{r_j}\right)^\eta.
\end{equation}
The symmetric estimate follows from \eqref{e:29}.
\end{lemma}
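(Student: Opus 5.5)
The plan is to repeat the Cauchy--Schwarz argument of Lemma \ref{l:6}, applied to the difference kernel $D_j(x)=K_j(x,y)-K_j(x,y')$ instead of to $K_j$. With $W_j(x,y)=(1+\rho(x,y)/r_j)^s$, Cauchy--Schwarz with respect to $d\omega(x)$ on $\calC$ gives
$$
\int_\calC |D_j(x)|\,d\omega(x)\le \left(\int_\calC W_j(x,y)^2|D_j(x)|^2\,d\omega(x)\right)^{1/2}\left(\int_\calC W_j(x,y)^{-2}\,d\omega(x)\right)^{1/2}.
$$
The weight is centred at $y$, which is exactly the weight appearing in \eqref{e:28}.

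The two factors are then controlled separately.

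\begin{enumerate}[(i)]
\item By \eqref{e:28}, the first factor is at most $A\,V(y,r_j)^{-1/2}(\rho(y,y')/r_j)^\eta$.
\item By \eqref{e:7} of Lemma \ref{l:2}, which needs $s>N_\kappa/2$, the second factor is at most $C\,V(y,r_j)^{1/2}$.
\end{enumerate}

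Multiplying, the volume factors $V(y,r_j)^{\mp1/2}$ cancel and \eqref{e:33} follows.

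There is one small point to check: the ball in Lemma \ref{l:2} is centred at $y$, and \eqref{e:28} is also normalised by $V(y,r_j)$, so no change of centre is needed. Even if one preferred to centre at $y'$, the condition $\rho(y,y')\le r_j$ together with doubling \eqref{e:5} gives $V(y,r_j)\simeq V(y',r_j)$ and $W_j(x,y)\simeq W_j(x,y')$, so the constants change only by a factor depending on $N$ and $\kappa$.

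The symmetric estimate is obtained in the same way. One integrates in $y$, uses \eqref{e:29} for the first factor, and uses Lemma \ref{l:2} centred at $x$ for the second. I do not expect a genuine obstacle here; the only substantive ingredient is the convergence of the weighted integral in Lemma \ref{l:2}, which is where $s>N_\kappa/2$ is used.
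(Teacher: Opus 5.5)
Your proposal is correct and is exactly the paper's argument: apply the Cauchy--Schwarz inequality \eqref{e:32} with $E=\calC$ to the difference $K_j(x,y)-K_j(x,y')$. Then bound the weighted factor by \eqref{e:28} and the inverse-weight factor by Lemma \ref{l:2}, so the volume factors cancel; the symmetric estimate follows in the same way from \eqref{e:29}.
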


\begin{proof}
Use \eqref{e:32} with $E=\calC$ and replace $K_j(x,y)$ by $K_j(x,y)-K_j(x,y')$. The first factor is controlled by
\eqref{e:28}; the second is controlled by Lemma \ref{l:2}.
\end{proof}

\subsection{The global H\"ormander integral condition}

Let $K(x,y)=\sum_{j\in\bbZ}K_j(x,y)$ off the diagonal. The following scale summation is the passage from $\CH^2_{s,\eta}$ to
the usual integral H\"ormander kernel condition.

\begin{proposition}
\label{p:2} Assume $s>N_\kappa/2$ and $0<\eta\le1$. Suppose the dyadic kernels satisfy
\eqref{e:26} and \eqref{e:28}. Then
\begin{equation}\label{e:34}
        \sup_{y,y'\in\calC}
        \int_{\rho(x,y)>2\rho(y,y')}
        |K(x,y)-K(x,y')|\,d\omega(x)
        \le CA.
\end{equation}
If \eqref{e:27} and \eqref{e:29} also hold, then
\begin{equation}\label{e:35}
        \sup_{x,x'\in\calC}
        \int_{\rho(x,y)>2\rho(x,x')}
        |K(x,y)-K(x',y)|\,d\omega(y)
        \le CA.
\end{equation}
\end{proposition}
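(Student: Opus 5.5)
Fix $y,y'\in\calC$ with $\delta=\rho(y,y')>0$ and let $E=\{x:\rho(x,y)>2\delta\}$. Since $K=\sum_j K_j$ off the diagonal (in the sense of Definition \ref{d:3}), we have
$$
\int_E|K(x,y)-K(x,y')|\,d\omega(x)\le\sum_{j\in\bbZ}\int_E|K_j(x,y)-K_j(x,y')|\,d\omega(x).
$$
If convergence is only known in the representing sense, I would run the argument on finite truncations $\sum_{|j|\le J}K_j$, where the bounds are uniform in $J$, and then pass to the limit by Fatou's lemma. I then split the $j$-sum at the scale $r_j=2^{-j}$ comparable to $\delta$.

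\emph{Small-difference scales}, $r_j\ge\delta$. Here $\rho(y,y')\le r_j$, so Lemma \ref{l:7} applies. Enlarging $E$ to $\calC$, the $j$-th term is at most $CA(\delta/r_j)^\eta$. Summing over $j$ with $2^{-j}\ge\delta$ is a geometric series in $2^{-j\eta}$, dominated by its term at $r_j\approx\delta$, and it gives $CA$.

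\emph{Large-difference scales}, $r_j<\delta$. I do not use the difference structure here. I bound the $j$-th term by
$$
\int_E|K_j(x,y)|\,d\omega(x)+\int_E|K_j(x,y')|\,d\omega(x).
$$
On $E$ we have $\rho(x,y)>2\delta$, so the first integral is bounded by \eqref{e:31} with $a=\delta/r_j\ge1$, which gives $CA(\delta/r_j)^{N_\kappa/2-s}$. For the second integral, the triangle inequality gives $\rho(x,y')\ge\rho(x,y)-\delta>\delta$ on $E$. So again $E\subset\{\rho(x,y')>ar_j\}$ with $a=\delta/r_j$, and \eqref{e:31} at the point $y'$ gives the same bound. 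Since $s>N_\kappa/2$, the exponent $N_\kappa/2-s$ is negative. The sum over $r_j<\delta$ is then a geometric series in $(\delta/r_j)^{-(s-N_\kappa/2)}$, dominated by the term with $r_j\approx\delta$, and it is bounded by $CA$.

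Adding the two ranges gives \eqref{e:34} with a constant independent of $y,y'$. For \eqref{e:35} the argument is symmetric in the other variable: I use \eqref{e:27} through the second-variable version of Lemma \ref{l:6}, and \eqref{e:29} through the symmetric form of Lemma \ref{l:7}.

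The main point needing care is the switch in the large-scale range from $y$ to $y'$. There one must check that the region $E$, which is defined relative to $y$, still lies in the tail region $\{\rho(x,y')>ar_j\}$ with $a\ge1$ relative to $y'$. The factor $2$ in $2\rho(y,y')$ is exactly what ensures this. Apart from that check, the proof is a double geometric series, and it converges precisely because $\eta>0$ and $s>N_\kappa/2$.
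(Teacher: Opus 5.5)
Your proposal is correct and is essentially the paper's proof: the paper splits the dyadic sum at $j_0$ with $2^{-j_0-1}<\delta\le 2^{-j_0}$, which gives exactly your ranges $r_j\ge\delta$ and $r_j<\delta$. On the coarse scales it sums Lemma \ref{l:7} as a geometric series, and on the fine scales it applies the tail bound \eqref{e:31} at $y$ and at $y'$, using $\rho(x,y')>\delta$ on $\{\rho(x,y)>2\delta\}$, just as you do. Your Fatou aside is unnecessary, since Definition \ref{d:3} takes $K=\sum_j K_j$ pointwise off the diagonal, so the termwise triangle inequality and monotone convergence suffice; Fatou would in any case need pointwise convergence, not merely convergence in the representing sense.
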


\begin{proof}
We prove \eqref{e:34}. Let $\delta=\rho(y,y')$. If $\delta=0$, there is nothing to show. Choose $j_0\in\bbZ$ such
that
$$
2^{-j_0-1}<\delta\le2^{-j_0}.
$$
For $j\le j_0$ one has $\delta\le r_j$, and Lemma \ref{l:7} gives
$$
\sum_{j\le j_0}
        \int_{\rho(x,y)>2\delta}|K_j(x,y)-K_j(x,y')|\,d\omega(x)
        \le CA\sum_{j\le j_0}\left(\frac{\delta}{r_j}\right)^\eta
        \le CA\sum_{k=0}^\infty2^{-k\eta}
        \le C_\eta A.
$$
For $j>j_0$, the region $\rho(x,y)>2\delta$ implies $\rho(x,y')>\delta$. Hence Lemma \ref{l:6} gives
\begin{align*}
        \int_{\rho(x,y)>2\delta}|K_j(x,y)-K_j(x,y')|\,d\omega(x)
        &\le
        \int_{\rho(x,y)>2\delta}|K_j(x,y)|\,d\omega(x)
        +\int_{\rho(x,y')>\delta}|K_j(x,y')|\,d\omega(x)          \\
        &\le CA\left(\frac{r_j}{\delta}\right)^{s-N_\kappa/2}.
\end{align*}
Since $s>N_\kappa/2$,
$$
\sum_{j>j_0}\left(\frac{r_j}{\delta}\right)^{s-N_\kappa/2}
        \le C\sum_{k=1}^\infty2^{-k(s-N_\kappa/2)}<\infty.
$$
The two estimates prove \eqref{e:34}; \eqref{e:35} is identical after interchanging the variables.
\end{proof}


\subsection{Finite matrix Calder\'on--Zygmund theorem on the chamber}
\label{s:7}

We use the following standard singular integral hypotheses on the chamber.

Let $(\calC,\rho,d\omega)$ be the chamber space and let $E$ be a finite-dimensional complex vector space. A chamber singular
integral is a linear operator $\mathbb T$ with kernel $K(x,y)\in\calL(E)$, $x\ne y$, such that
$$
    \mathbb T F(x)=\int_\calC K(x,y)F(y)\,d\omega(y), \qquad x\notin\operatorname{supp}F,
$$
for bounded compactly supported $E$-valued functions $F$. We assume
\begin{equation}\label{e:36}
        \sup_{y,y'} \int_{\rho(x,y)>2\rho(y,y')}\|K(x,y)-K(x,y')\|_{\calL(E)}\,d\omega(x)<\infty,
\end{equation}
\begin{equation}\label{e:37}
        \sup_{x,x'}
        \int_{\rho(x,y)>2\rho(x,x')}
        \|K(x,y)-K(x',y)\|_{\calL(E)}\,d\omega(y)<\infty.
\end{equation}

\begin{definition}
\label{d:4} 
We say that $\mathbb T$ has admissible truncations if
$$
\mathbb T_\varepsilon F(x)=
        \int_{\rho(x,y)>\varepsilon}K(x,y)F(y)\,d\omega(y)
$$
are uniformly bounded on $L^2(\calC,d\omega;E)$, or if the principal value operator is the $L^2$-limit of such uniformly bounded truncations.
\end{definition}

The finite-dimensional matrix estimate follows entry by entry from the scalar Calder\'on--Zygmund theorem.

\begin{theorem}
\label{t:6} Let $E$ be finite-dimensional. Suppose $\mathbb T$ is bounded on $L^2(\calC,d\omega;E)$ and its kernel
satisfies \eqref{e:36} and \eqref{e:37}. Then
$$
    \|\mathbb T F\|_{L^p(\calC;E)}  \le C_p\|F\|_{L^p(\calC;E)},  \qquad 1<p<\infty.
$$
If $\mathbb T$ has admissible truncations, then
$$
\omega\{x:\|\mathbb T F(x)\|_E>\lambda\}  \le \frac{C}{\lambda}\|F\|_{L^1(\calC;E)}.
$$
\end{theorem}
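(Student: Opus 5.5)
The plan is to reduce Theorem \ref{t:6} to the scalar Calder\'on--Zygmund theorem on the space of homogeneous type $(\calC,\rho,d\omega)$. Lemma \ref{l:1} shows that this space is doubling, so the classical Coifman--Weiss theory applies. Fix a basis $e_1,\dots,e_d$ of $E$ and identify $\calL(E)$ with $d\times d$ matrices. The entries are $\mathbb T_{kl}F=\pi_k\mathbb T(Fe_l)$ for scalar $F$, where $\pi_k$ is the $k$th coordinate functional. Each entry is then a scalar operator with the following properties:
\begin{itemize}
\item it is bounded on $L^2(\calC,d\omega)$, with norm at most $C_E\|\mathbb T\|_{L^2\to L^2}$, because all norms on $E$ are equivalent;
\item its kernel is $K_{kl}(x,y)=\pi_k K(x,y)e_l$ off the diagonal;
\item since $|K_{kl}(x,y)-K_{kl}(x,y')|\le C_E\|K(x,y)-K(x,y')\|_{\calL(E)}$, the conditions \eqref{e:36} and \eqref{e:37} give the two scalar H\"ormander integral conditions for every entry.
\end{itemize}

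Next I will invoke the scalar theorem. On a space of homogeneous type, an $L^2$-bounded operator whose kernel satisfies the H\"ormander condition in the $y$-variable (\eqref{e:36}) is of weak type $(1,1)$. This is proved by the Calder\'on--Zygmund decomposition built from dyadic cubes or Whitney balls, available because of the doubling estimate \eqref{e:5}. The bad parts have mean zero on balls $B_i$, and \eqref{e:36} controls $\mathbb T b_i$ off the dilated balls $2B_i$. Marcinkiewicz interpolation then gives $L^p$ boundedness for $1<p\le2$. For $2\le p<\infty$ I apply the same argument to the adjoint. Its kernel is $\overline{K(y,x)}^{\,T}$, and \eqref{e:37} is exactly the H\"ormander condition in the correct variable for the adjoint. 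Duality then gives the range $2\le p<\infty$. Summing over the $d^2$ entries, and using once more the equivalence of norms on $E$, yields $\|\mathbb T F\|_{L^p(\calC;E)}\le C_p\|F\|_{L^p(\calC;E)}$.

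The weak type $(1,1)$ statement needs more care, because it must hold for $\mathbb T$ itself and not only on the dense class where the kernel representation is valid. The issue is that for $F\in L^1\cap L^2$ the good part is handled in $L^2$, but the bad parts require the kernel formula to hold at points outside the support. Admissible truncations resolve this. Each truncation $\mathbb T_\varepsilon$ is given by an absolutely convergent kernel integral and is uniformly $L^2$ bounded. Its kernel $K\mathbf 1_{\rho>\varepsilon}$ satisfies \eqref{e:36} with a constant independent of $\varepsilon$, up to a standard boundary-annulus term that is controlled by doubling. The Calder\'on--Zygmund decomposition therefore gives weak $(1,1)$ bounds for $\mathbb T_\varepsilon$ uniformly in $\varepsilon$. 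Passing to the $L^2$ limit, which holds along a subsequence almost everywhere, transfers the weak-type bound to $\mathbb T$ on $L^1\cap L^2$, and density extends it. The step I expect to be the main obstacle is this uniform verification of the H\"ormander condition for the truncated kernels. If it proves awkward, the fallback is to state the weak-type conclusion directly under the hypothesis that $\mathbb T$ is the $L^2$-limit of the truncations, as Definition \ref{d:4} permits, and run the decomposition for $\mathbb T$ with the kernel representation applied to the bad parts, each of which has support away from the points where it is evaluated.
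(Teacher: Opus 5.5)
Your argument is the paper's proof. You fix a basis, note that each scalar entry is $L^2$-bounded and inherits the H\"ormander conditions from \eqref{e:36}--\eqref{e:37}, apply the Coifman--Weiss Calder\'on--Zygmund theorem on $(\calC,\rho,d\omega)$ entry by entry, and sum over the finite matrix. The only difference is that the paper cites the scalar theorem, while you sketch its proof.

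One caution about your primary weak-type route. You claim that the truncated kernels $K\mathbf 1_{\rho>\varepsilon}$ satisfy \eqref{e:36} uniformly, ``up to a boundary-annulus term controlled by doubling.'' The hypotheses do not justify this. Bounding $\int_{\varepsilon-\delta<\rho(x,y)\le\varepsilon+\delta}\|K(x,y)\|_{\calL(E)}\,d\omega(x)$ needs a pointwise size bound such as $\|K(x,y)\|\lesssim V(y,\rho(x,y))^{-1}$. Doubling only controls the measure of the annulus, not the size of $K$ on it, and the integral conditions \eqref{e:36}--\eqref{e:37} do not give such a bound.

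Your fallback is the correct argument, and it does not need the truncation hypothesis at all. For bounded compactly supported $F$, the bad parts of the Calder\'on--Zygmund decomposition are themselves bounded and compactly supported, so the kernel representation applies to them directly. The bound then extends to $L^1\cap L^2$ by approximating $F$ simultaneously in $L^1$ and $L^2$ and passing to an a.e.\ convergent subsequence of $\mathbb T F_n$. This is exactly the weak-type $(1,1)$ step already used inside your $L^p$ argument, so the admissible-truncation assumption in the statement is harmless but not needed.
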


\begin{proof}
Choose a basis $e_1,\ldots,e_M$ of $E$ and write
$$
     F=\sum_{b=1}^M F_b e_b,  \qquad  \mathbb T F=\sum_{a=1}^M\left(\sum_{b=1}^M T_{ab}F_b\right)e_a.
$$
The scalar entries satisfy
$$
        \|T_{ab}\|_{L^2\to L^2} \le C_E\|\mathbb T\|_{L^2(E)\to L^2(E)}.
$$
Moreover \eqref{e:36}--\eqref{e:37} imply the scalar H\"ormander conditions for each $T_{ab}$. 
Hence, by the standard scalar Calder\'on--Zygmund theorem on the space of homogeneous type $(\calC,\rho,d\omega)$ \cite{CW1971},
$$
\|T_{ab}g\|_{L^p(\calC)}\le C_p\|g\|_{L^p(\calC)},
        \qquad 1<p<\infty.
$$
Since the matrix is finite,
\begin{align*}
        \|\mathbb T F\|_{L^p(\calC;E)}  &\le C_E\sum_{a,b=1}^M\|T_{ab}F_b\|_{L^p(\calC)}       
         \le C_{p,E}\sum_{b=1}^M\|F_b\|_{L^p(\calC)} \le C_{p,E}\|F\|_{L^p(\calC;E)}.
\end{align*}
The weak type estimate is obtained from the scalar weak type $(1,1)$ theorem entry by entry and summing over the finite matrix.
\end{proof}

We next apply this matrix theorem to kernels satisfying the chamber $L^2$ H\"ormander estimates.

\begin{proposition}
\label{p:3} 
Let $s>N_\kappa/2$ and $0<\eta\le1$. Let $\mathbb T$ be an $L^2$-bounded finite matrix operator on the chamber. 
If the dyadic kernels of every entry satisfy Definition \ref{d:3}, then $\mathbb T$ is bounded on $L^p(\calC,d\omega;E)$ for every $1<p<\infty$. 
If the truncations are admissible, then $\mathbb T$ is of weak type $(1,1)$.
\end{proposition}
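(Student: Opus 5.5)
The plan is to reduce Proposition \ref{p:3} to Theorem \ref{t:6}. The only thing to verify is that the kernel of every matrix entry satisfies the integral H\"ormander conditions \eqref{e:36}--\eqref{e:37}.

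\emph{Step 1: the full kernel of each entry.} Fix an entry $T_{ab}$ of $\mathbb T$, with dyadic kernels $K_j^{ab}$ satisfying \eqref{e:26}--\eqref{e:29} with constant $A$. By the representation clause of Definition \ref{d:3}, the off-diagonal kernel of $T_{ab}$ is $K^{ab}(x,y)=\sum_j K_j^{ab}(x,y)$ for $x\ne y$. The series converges absolutely in $L^1(d\omega(x))$ on $\{\rho(x,y)>\delta\}$: split at $r_j\simeq\delta$, use \eqref{e:31} for the small scales, and for the large scales use \eqref{e:30} together with the trivial bound, so that the sum matches the integral representation of $T_{ab}F$ for $x\notin\operatorname{supp}F$.

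To be careful, I would work with finite truncations $\sum_{|j|\le J}K_j^{ab}$. All estimates below are uniform in $J$, and Fatou's lemma passes them to the limit, exactly as the definition permits.

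\emph{Step 2: H\"ormander conditions.} Apply Proposition \ref{p:2} to each entry. Since $s>N_\kappa/2$, \eqref{e:26} and \eqref{e:28} give
$$\sup_{y,y'}\int_{\rho(x,y)>2\rho(y,y')}|K^{ab}(x,y)-K^{ab}(x,y')|\,d\omega(x)\le CA,$$
and \eqref{e:27} and \eqref{e:29} give the dual estimate. The dimension of $E$ is finite and the operator norm on $\calL(E)$ is bounded by $C_E$ times the sum of the absolute values of the entries. Summing over $a,b$ therefore yields \eqref{e:36} and \eqref{e:37} for the matrix kernel $K=(K^{ab})$, with constant $C_EA$.

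\emph{Step 3: conclusion.} $\mathbb T$ is $L^2$-bounded by hypothesis, so Theorem \ref{t:6} gives $L^p$ boundedness for $1<p<\infty$, with constant depending on $p$, $E$, the $L^2$ norm and $A$. Under admissible truncations, the weak type $(1,1)$ part of Theorem \ref{t:6} applies verbatim.

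The main obstacle is Step 1. We must ensure that the summed kernel really is the Calder\'on--Zygmund kernel of the $L^2$-bounded operator, and not merely a formal series. The summation argument from Proposition \ref{p:2}, combined with the representation requirement in Definition \ref{d:3}, handles this. The remaining steps are direct citations.
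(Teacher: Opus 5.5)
Your Steps 2 and 3 are exactly the paper's proof. You apply Proposition~\ref{p:2} to each scalar entry, use finite-dimensionality of $E$ to pass from entrywise bounds to the $\calL(E)$-valued conditions \eqref{e:36}--\eqref{e:37}, and invoke Theorem~\ref{t:6} for both the $L^p$ and the weak type $(1,1)$ conclusions. The proposal is therefore correct once Step~1 is removed, but Step~1 as written should not stay.

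First, Step~1 is not an obstacle at all. That $\sum_j K_j^{ab}$ is the off-diagonal kernel of $T_{ab}$ is one of the hypotheses packaged into Definition~\ref{d:3}, and the paper does not prove it.

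Second, the justification you give is false in general. On $\{\rho(x,y)>\delta\}$ there are infinitely many scales with $r_j\ge\delta$, and \eqref{e:30} bounds each of them only by $CA$, which is not summable. No other bound can rescue absolute $L^1$ convergence there, because a genuine Calder\'on--Zygmund kernel is not integrable off a ball. For example, for $N=1$, $\kappa=0$ the lifted Hilbert transform has the entry $c\,(x-y)^{-1}$.

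What does converge absolutely in $L^1(\{\rho(x,y)>2\delta\},d\omega(x))$ is the \emph{differenced} series $\sum_j\bigl(K_j(\cdot,y)-K_j(\cdot,y')\bigr)$. Use \eqref{e:31} for $r_j<\delta$ and \eqref{e:33} for $r_j\ge\delta$. This is precisely the content of Proposition~\ref{p:2}, and it is all that \eqref{e:36} needs.

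The undifferenced series converges absolutely only on bounded sets $B$. Cauchy--Schwarz and \eqref{e:26} give $\int_B|K_j(x,y)|\,d\omega(x)\le A\,\omega(B)^{1/2}V(y,r_j)^{-1/2}$. By Lemma~\ref{l:1}, $V(y,r_j)\gtrsim r_j^{N_\kappa}$, so this bound is summable over $r_j\ge\delta$. That local statement is what you would use to check the representation against compactly supported $F$, if it were not already assumed.
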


\begin{proof}
Proposition \ref{p:2} gives, for every scalar entry,
$$
     H_y(K_{ab})+H_x(K_{ab}) \le C_{s,\eta,N,\kappa}  \|\mathbb T\|_{\CH^2_{s,\eta}}.
$$
Since $E$ is finite-dimensional, the operator norm of a matrix is bounded by a fixed multiple of the largest scalar entry.
The same comparison holds in the first variable. Hence \eqref{e:36}--\eqref{e:37} hold, and Theorem \ref{t:6} applies.
\end{proof}

\subsection{Proof of the product chamber theorem}
\label{s:8}

\begin{proof}[Proof of Theorem \ref{t:2}]
Let $f\in L^2(d\omega)\cap L^p(d\omega)$ and set $F=\calW Uf$. 
By Lemmas \ref{l:3} and \ref{l:4}, $\|F\|_{L^p(\calC;\bbC^{2^N})}\simeq_N \|f\|_{L^p(\bbR^N,d\omega)}$, and $\calW U(T_mf)=\mathbb T_mF$. 
Dunkl Plancherel gives
$$
    \|\mathbb T_m\|_{L^2(\calC;\bbC^{2^N})\to L^2(\calC;\bbC^{2^N})}  \le C_N\|m\|_\infty.
$$
The condition $\mathbb T_m\in\CH^2_{s,\eta}$ and Proposition \ref{p:3} imply
$$
     \|\mathbb T_mF\|_{L^p(\calC;\bbC^{2^N})}  \le C_{p,N,\kappa,s,\eta} \bigl(\|m\|_\infty+ \|\mathbb T_m\|_{\CH^2_{s,\eta}}\bigr)   \|F\|_{L^p(\calC;\bbC^{2^N})}.
$$
Transferring back gives
$$
     \|T_m f\|_{L^p(\bbR^N,d\omega)}  \le  C_{p,N,\kappa,s,\eta}  \bigl(\|m\|_\infty+  \|\mathbb T_m\|_{\CH^2_{s,\eta}}\bigr) \|f\|_{L^p(\bbR^N,d\omega)}.
$$
The dense subspace $L^2\cap L^p$ gives the extension to all of $L^p$, $1<p<\infty$. 
The proof takes place on the full chamber vector-valued space, not on a radial or invariant subspace.
\end{proof}

\begin{remark}
A radial-function multiplier theorem does not imply a full non-radial theorem by duality, since a non-radial multiplier need not preserve the radial subspace. 
The chamber lift replaces the full-space question with a finite-dimensional vector-valued singular integral estimate on one chamber.
\end{remark}


\section{Scalar Walsh--Sobolev verification}
\label{s:9}

The remaining step is to verify $\CH^2_{s,\eta}$ from scalar conditions on $m$.
 In the product case this is possible because the Dunkl transform splits into tensor-product parity components. 
 The $(\alpha,\beta)$ matrix entry is a product Bessel oscillatory integral with amplitude $m_{\alpha+\beta}$. 
 The verification is deliberately made in $L^2$; no pointwise kernel bound is used. 
 This is the argument which keeps the H\"ormander order at the scale $N_\kappa/2$.

The scalar theorem remains wall-separated in frequency. The dyadic Walsh pieces are kept a fixed distance from the coordinate faces $R_i=0$. 
There is no restriction on the spatial variables and no radiality condition on $f$.

Throughout this section $\Sigma_N\simeq\bbZ_2^N$, $\calC=(0,\infty)^N$, and
$$
     d\omega(x)=\prod_{i=1}^N x_i^{2\kappa_i}\,dx,  \qquad N_\kappa=N+2\sum_{i=1}^N\kappa_i.
$$
Write $V(x,r)=\omega(B(x,r))$, where $B(x,r)=\{y\in\calC:|x-y|<r\}$. The volume estimate used below is
\begin{equation}\label{e:38}
        V(x,r)\simeq r^N\prod_{i=1}^N(x_i+r)^{2\kappa_i}.
\end{equation}

\subsection{Kernels between parity components}

For one coordinate and one multiplicity parameter $\lambda\ge0$, set
$$
      \varphi^{(\lambda)}_0(t) = j_{\lambda-1/2}(t), \qquad \varphi^{(\lambda)}_1(t)
      =  \frac{t}{2\lambda+1}j_{\lambda+1/2}(t),  \qquad t\ge0,
$$
where $j_\nu$ is the normalized Bessel function
$$
      j_\nu(t)=2^\nu\Gamma(\nu+1)\frac{J_\nu(t)}{t^\nu}.
$$
For $\alpha\in\{0,1\}^N$, define
\begin{equation}\label{e:39}
        \Phi_\alpha(x,R) = \prod_{i=1}^N  \varphi^{(\kappa_i)}_{\alpha_i}(x_iR_i),  \qquad x,R\in\calC.
\end{equation}

As explained in Section \ref{s:5}, the odd component is equivalently a shift of the Hankel measure. Namely,
$$
     d\omega_\alpha(x) =\prod_{i=1}^N x_i^{2\kappa_i+2\alpha_i}\,dx,  \qquad
F_\alpha(x)=x^{-\alpha}f_\alpha(x),
$$
and
\begin{equation}\label{e:40}
        \|f_\alpha\|_{L^2(d\omega)} = \|F_\alpha\|_{L^2(d\omega_\alpha)}.
\end{equation}
Thus all kernels below are written with respect to the same base measure $d\omega$, but the factors in $\Phi_\alpha$ retain the information of this shifted Hankel bookkeeping.

For a symbol $m\in L^\infty(\bbR^N)$, recall its Walsh pieces on the positive frequency chamber:
\begin{equation}\label{e:41}
        m_\theta(R) = 2^{-N} \sum_{\varepsilon\in\{\pm1\}^N}\varepsilon^\theta m(\varepsilon R),\qquad  \theta\in\{0,1\}^N.
\end{equation}
Then
$$
        m(\varepsilon R) = \sum_{\theta\in\{0,1\}^N} \varepsilon^\theta m_\theta(R).
$$
In the $(\alpha,\beta)$-component, the multiplier piece is $m_{\alpha+\beta}$, where addition is modulo two. Let
$$
    \psi_j(r)=\psi(2^{-j}r), \qquad r_j=2^{-j},
$$
and put
\begin{equation}\label{e:42}
        a_{\theta,j}(R)=\psi_j(|R|)m_\theta(R).
\end{equation}
The dyadic chamber kernel of the $(\alpha,\beta)$-entry is
\begin{equation}\label{e:43}
        K_j^{\alpha\beta}(x,y) =  c_{\alpha\beta,\kappa}  \int_\calC  a_{\alpha+\beta,j}(R)  \Phi_\alpha(x,R)  \Phi_\beta(y,R) \,d\omega(R),
\end{equation}
where $c_{\alpha\beta,\kappa}$ is harmless and non-zero.

\subsection{Wall separation and scalar Sobolev norms}

Fix a parameter $0<c_0<N^{-1/2}$ and define the wall-separated subchamber of frequency space by
\begin{equation}\label{e:44}
        \Gamma_{c_0}  =  \{R\in\calC: R_i\ge c_0 |R|\text{ for every }i=1,\ldots,N\}.
\end{equation}
Since $m_\theta$ is initially only an $L^\infty$-function, support below means essential support. The support condition used in this section is
\begin{equation}\label{e:45}
        \operatorname*{ess\,supp}m_\theta\subset \Gamma_{c_0}   \qquad\text{for every }\theta\in\{0,1\}^N.
\end{equation}
It excludes the delicate case where a dyadic    annulus touches a coordinate wall $R_i=0$.  No spatial wall separation is  imposed.

For $\sigma\ge0$, let $W_2^\sigma(\bbR^N)$ denote the usual Bessel-potential Sobolev space. Each dyadic piece  $a_{\theta,j}(2^j\cdot)$ is extended by zero outside $\calC$. Define
\begin{equation}\label{e:46}
        \calS_\sigma(m) =   \max_{\theta\in\{0,1\}^N}   \sup_{j\in\bbZ}  \left\|a_{\theta,j}(2^j\cdot)\right\|_{W_2^\sigma(\bbR^N)}.
\end{equation}
This is the scalar dyadic Sobolev quantity which will be lifted to the chamber matrix estimate.

For comparison with the usual Mihlin formulation, for an integer $M\ge0$ set
\begin{equation}\label{e:47}
        \calM_M(m)  :=\max_{\theta\in\{0,1\}^N} \max_{|\nu|\le M} \sup_{R\in\calC}  |R|^{|\nu|}\,|\partial_R^\nu m_\theta(R)|.
\end{equation}
Finiteness means that each $m_\theta$ is $C^M$ on $\calC$, with the weighted derivative bounds above.

\begin{lemma}
\label{l:8} Assume \eqref{e:45} and $\calM_M(m)<\infty$ for an integer $M\ge0$.  If $0\le\sigma<M$, then
\begin{equation}\label{e:48}
        \calS_\sigma(m) \le C_{N,M,\sigma,c_0,\psi}\,\calM_M(m).
\end{equation}
\end{lemma}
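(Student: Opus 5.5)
Fix $\theta\in\{0,1\}^N$ and $j\in\bbZ$, and set $b(R)=a_{\theta,j}(2^jR)=\psi(|R|)\,m_\theta(2^jR)$, extended by zero outside $\calC$. The plan is to show that $b\in C^M_c(\bbR^N)$ with support in a fixed compact set $Q$, independent of $j$ and $\theta$, and with $\|b\|_{C^M}\lesssim\calM_M(m)$. The embedding $C^M_c(Q)\subset W_2^M(\bbR^N)\subset W_2^\sigma(\bbR^N)$ for $\sigma<M$ then gives $\|b\|_{W_2^\sigma}\le\|b\|_{W_2^M}\le C_Q\|b\|_{C^M}$. Taking the maximum over $\theta$ and the supremum over $j$ yields \eqref{e:48}. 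In fact $\sigma\le M$ would suffice; the strict inequality is harmless.

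\textbf{Support.} The factor $\psi(|R|)$ forces $1/2<|R|<2$. By \eqref{e:45}, which is dilation invariant since $\Gamma_{c_0}$ is a cone, $m_\theta(2^j\cdot)$ vanishes a.e. outside $\Gamma_{c_0}$. Hence $b$ is essentially supported in
\[
Q=\{R\in\Gamma_{c_0}:1/2\le|R|\le2\},
\]
and on $Q$ we have $R_i\ge c_0/2$ for every $i$. So $Q$ has distance at least $c_0/2$ from every coordinate wall. I would introduce a smooth cutoff $\chi\in C_c^\infty(\bbR^N)$ with $\chi=1$ on $Q$ and $\supp\chi\subset\{R:R_i>c_0/4\ \text{for all } i,\ 1/4<|R|<4\}\subset\calC$. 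Then $b=\chi b$ everywhere, so the zero extension across $\partial\calC$ introduces no singularity.

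\textbf{Derivatives.} This is the step needing care, since $m_\theta$ is only $C^M$ on $\calC$ and its support is merely essential. The plan is to work on the open set $\Omega=\{R_i>c_0/4\}\cap\{1/4<|R|<4\}$, where $b=\psi(|\cdot|)\,m_\theta(2^j\cdot)$ is $C^M$. By Leibniz and the chain rule,
\[
\partial^\nu b(R)=\sum_{\mu\le\nu}\binom{\nu}{\mu}\,\partial^{\nu-\mu}\bigl[\psi(|R|)\bigr]\;2^{j|\mu|}(\partial^\mu m_\theta)(2^jR).
\]
Each factor $\partial^{\nu-\mu}[\psi(|R|)]$ is bounded by $C_{\psi,M}$, because $|R|\ge1/4$ on $\Omega$ so $|\cdot|$ is smooth there. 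For the other factor,
\[
2^{j|\mu|}\,|\partial^\mu m_\theta(2^jR)|=|R|^{-|\mu|}\,|2^jR|^{|\mu|}\,|\partial^\mu m_\theta(2^jR)|\le 4^{|\mu|}\calM_M(m).
\]
Hence $|\partial^\nu b|\le C_{N,M,\psi}\,\calM_M(m)$ on $\Omega$ for all $|\nu|\le M$.

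The essential-support condition together with continuity of $m_\theta$ on $\calC$ gives $m_\theta=0$ pointwise on the open set $\calC\setminus\Gamma_{c_0}$. Since $\Gamma_{c_0}$ is closed in $\calC$, the function $b$ vanishes on the open set $\Omega\setminus Q$. It is also identically zero outside $\Omega$ near $\partial\Omega$, because $\supp\psi(|\cdot|)$ and $Q$ sit strictly inside $\Omega$. Therefore $b\in C^M_c(\bbR^N)$ with
\[
\|\partial^\nu b\|_{L^2}\le|\Omega|^{1/2}\,C\,\calM_M(m),\qquad |\nu|\le M.
\]

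\textbf{Conclusion.} Summing over $|\nu|\le M$ gives $\|b\|_{W_2^M}\le C_{N,M,c_0,\psi}\,\calM_M(m)$. The monotonicity $\|b\|_{W_2^\sigma}\le\|b\|_{W_2^M}$, which follows from Plancherel since $(1+|\xi|^2)^{\sigma/2}\le(1+|\xi|^2)^{M/2}$, completes the proof of \eqref{e:48}.

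The only genuine obstacle is the regularity of the zero extension at $\partial\calC$, and wall separation removes it. The constant depends on $c_0$ only through the cutoff, and the dependence could even be eliminated, since $|R|\ge1/2$ already keeps the support away from the origin.
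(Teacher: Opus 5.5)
Your proposal is correct and follows essentially the same route as the paper: the rescaled piece $a_{\theta,j}(2^j\cdot)$ is supported in the fixed compact set $\Gamma_{c_0}\cap\{1/2\le|u|\le2\}$, and the Leibniz/chain rule bound controls every derivative of order at most $M$ uniformly in $j$ and $\theta$ by $\calM_M(m)$, hence also the $W_2^M$ norm. The differences are minor improvements: you pass from $W_2^M$ to $W_2^\sigma$ by the monotonicity $(1+|\xi|^2)^{\sigma/2}\le(1+|\xi|^2)^{M/2}$ rather than by interpolation, which also shows that $\sigma=M$ is allowed, and you use continuity of $m_\theta$ on $\calC$ together with wall separation to explain why the zero extension is genuinely $C^M$ on $\bbR^N$, a point the paper leaves implicit.
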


\begin{proof}
For $b_{\theta,j}(u)=a_{\theta,j}(2^ju)$, the support is contained in the fixed compact set
$$
     \Gamma_{c_0}\cap\{1/2\le |u|\le2\}.
$$
For every $|\nu|\le M$, it follows from Leibniz' rule to see
$$
        |\partial_u^\nu b_{\theta,j}(u)|   \le C_{\nu,\psi}\calM_M(m)
$$
uniformly in $j$ and $\theta$. Hence the integer Sobolev norms $\|b_{\theta,j}\|_{W_2^k}$, $0\le k\le M$, are uniformly bounded by $C\calM_M(m)$. 
The fractional estimate for every $\sigma<M$ follows from the standard interpolation description of Bessel-potential Sobolev spaces on $\bbR^N$.
\end{proof}

\subsection{A Fourier-Sobolev lemma for compact frequency support}

The product Bessel proof uses only one elementary Fourier fact. 
Compact frequency support allows one to freeze some physical variables without paying an extra Sobolev order. This is the point that removes an unnecessary $N/2$ loss.

\begin{lemma}
\label{l:9} 
Let $K\Subset\bbR^N$, let $0\le s<\sigma$, and let $N_0>N/2+2$. Suppose that $q(w,u)$ is a smooth amplitude on $\bbR^N_w\times\bbR^N_u$, supported in $u\in K$, and satisfying
\begin{equation}\label{e:49}
        \sup_{w\in\bbR^N,\,u\in K}  |\partial_w^\mu\partial_u^\nu q(w,u)| \le C_{\mu\nu}
\end{equation}
for all $|\mu|,|\nu|\le N_0+\lceil\sigma\rceil+2$. For $b\in W_2^\sigma(\bbR^N)$, set
$$
T_qb(w)=\int_{\bbR^N}e^{iw\cdot u}q(w,u)b(u)\,du.
$$
Then
\begin{equation}\label{e:50}
        \left\|(1+|w|)^sT_qb(w)\right\|_{L^2_w(\bbR^N)}  \le C\|b\|_{W_2^\sigma(\bbR^N)}.
\end{equation}
Moreover, for every subset $J\subset\{1,\ldots,N\}$,
\begin{equation}\label{e:51}
        \sup_{w_{J^c}\in\bbR^{J^c}} \left( \int_{\bbR^J}(1+|w|)^{2s}|T_qb(w)|^2\,dw_J  \right)^{1/2}  \le C\|b\|_{W_2^\sigma(\bbR^N)}.
\end{equation}
The constant depends on $N,s,\sigma,K$ and finitely many constants in \eqref{e:49}, but not on $b$.
\end{lemma}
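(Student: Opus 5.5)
The weighted estimate \eqref{e:50} will come from duality between decay in $w$ and smoothness in $u$, via integration by parts in the oscillatory integral. First I reduce to integer $s$ by interpolation, or simply dominate $(1+|w|)^s$ by a sum of monomials $|w^\gamma|$ with $|\gamma|\le\lceil s\rceil$. The honest $L^2$ argument, however, uses the following \emph{expansion of the amplitude in the $w$-variable}. Fix a smooth cutoff $\chi\in C_c^\infty$ equal to $1$ on $K$. Expand $\chi(u)q(w,u)$ in a Fourier series in $u$ on a large cube $Q\supset K$:
$$
\chi(u)q(w,u)=\sum_{k\in\bbZ^N}c_k(w)e^{i\lambda k\cdot u}.
$$
By \eqref{e:49} and repeated integration by parts in $u$, the coefficients satisfy $|\partial_w^\mu c_k(w)|\le C(1+|k|)^{-N_0-\lceil\sigma\rceil-2}$ uniformly in $w$.

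With this expansion,
$$
T_qb(w)=\sum_k c_k(w)\,\widehat{\chi b}(-w-\lambda k),
$$
up to normalization of the Fourier transform. The factor $\chi$ is harmless, since $b$ is supported in $K$ after multiplication by the amplitude, and $\|\chi b\|_{W_2^\sigma}\lesssim\|b\|_{W_2^\sigma}$. For each $k$, Peetre's inequality gives
$$
(1+|w|)^s\le(1+|w+\lambda k|)^s(1+\lambda|k|)^s.
$$
Together with $\sup_w|c_k(w)|$, this yields
$$
\|(1+|w|)^sc_k\widehat{\chi b}(-w-\lambda k)\|_{L^2_w}\le C(1+|k|)^{s-N_0-\lceil\sigma\rceil-2}\|\chi b\|_{W_2^s}.
$$
The decay in $k$ is summable because $N_0>N/2$ and the exponent exceeds $N$. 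This proves \eqref{e:50}, and in fact with $W_2^s$ in place of $W_2^\sigma$.

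The partial estimate \eqref{e:51} is where the gap $\sigma>s$ and the compact support are used; this is the main obstacle, since restricting an $L^2$ function to a fixed slice $w_{J^c}$ is not bounded in general. My plan is to show that $\widehat{g}$, for $g=\chi b$ with compact support, satisfies the slice estimate
$$
\sup_{w_{J^c}}\int(1+|w|)^{2s}|\widehat g(w)|^2\,dw_J\lesssim\|g\|_{W_2^\sigma}^2.
$$
Write $\widehat g=\widehat{\tilde\chi g}=\widehat{\tilde\chi}*\widehat g$ with another cutoff $\tilde\chi$. Since $\widehat{\tilde\chi}$ is Schwartz, $|\widehat g(w)|^2$ is dominated, via Cauchy--Schwarz, by $\int|\widehat{\tilde\chi}(w-v)||\widehat g(v)|^2\,dv$ times a constant. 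Integrating over $w_J$ with the weight, and using Peetre again, gives a bound of the form
$$
\int(1+|v|)^{2s}|\widehat g(v)|^2\Big(\int|\widehat{\tilde\chi}(w-v)|(1+|w-v|)^{2s}\,dw_J\Big)\,dv.
$$
The inner integral is bounded uniformly in $v$ and $w_{J^c}$. Hence the slice estimate holds even with $\sigma=s$, and the strict inequality $\sigma>s$ only provides slack.

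Finally, \eqref{e:51} follows by applying this slice bound termwise in the $k$-expansion. The shift $w\mapsto w+\lambda k$ preserves slices up to translating $w_{J^c}$, and the supremum over $w_{J^c}$ absorbs that translation, so the sum over $k$ converges as before. The constants depend only on $N$, $s$, $\sigma$, $K$ (through $\chi$, $\tilde\chi$, $Q$) and finitely many $C_{\mu\nu}$. This is exactly the claimed dependence.
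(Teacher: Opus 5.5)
Your route is different from the paper's and essentially sound, but the summation over $k$ fails as written. You bound each term by $C(1+|k|)^{s-M}\|\chi b\|_{W_2^s}$, with $M=N_0+\lceil\sigma\rceil+2$, and then sum over $k\in\mathbb{Z}^N$ by the triangle inequality. This needs $M-s>N$.

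The hypothesis only gives $M-s>N/2+4+(\lceil\sigma\rceil-s)$, which falls below $N$ in high dimension. For example, with $N=20$, $N_0=13$, $\sigma=10.5$, $s=10.4$ one gets $M-s=15.6$, and $\sum_{k\in\mathbb{Z}^{20}}(1+|k|)^{-15.6}$ diverges. This is harmless for the application in Proposition \ref{p:4}, where all derivatives of the amplitude are bounded, but it matters for the lemma as stated.

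The repair stays inside your framework. At each fixed $w$, apply Cauchy--Schwarz in $k$:
\[
(1+|w|)^{s}|T_qb(w)|\le\Bigl(\sum_k(1+|k|)^{2M}|c_k(w)|^2\Bigr)^{1/2}\Bigl(\sum_k(1+|k|)^{-2M}(1+|w|)^{2s}\bigl|\widehat{\chi b}(-w-\lambda k)\bigr|^2\Bigr)^{1/2}.
\]
For the first factor, use Parseval on $Q$. The periodization of $q(w,\cdot)$ is smooth because $q(w,\cdot)$ is supported in $K\Subset Q$, so $\sum_k(1+|k|)^{2M}|c_k(w)|^2\le C\sum_{|\nu|\le M}\|\partial_u^\nu q(w,\cdot)\|_{L^2(Q)}^2$. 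This is bounded uniformly in $w$ by \eqref{e:49}.

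For the second factor, integrate its square over $w\in\mathbb{R}^N$, or over $w_J$ with $w_{J^c}$ fixed. Peetre's inequality and your slice bound for $\widehat{\chi b}$ then leave $\sum_k(1+|k|)^{-2M}(1+\lambda|k|)^{2s}$, which is finite because $M-s>N/2$. You should also state that $\operatorname{supp}\chi$ lies in the interior of $Q$, so that the series equals $q(w,u)$ on $\operatorname{supp}(\chi b)$.

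With this repair, here is how the two routes compare.
\begin{itemize}
\item The paper gets the unweighted bound from $T_qT_q^*$ and Schur's lemma. It then moves the weights $w^\mu$ onto derivatives of $b$ by integrating by parts in $u$, and interpolates. For \eqref{e:51} it uses the Sobolev embedding $W_2^m(\mathbb{R}^{J^c})\hookrightarrow L^\infty$ in the frozen variables, with the $w_{J^c}$-derivatives landing on the amplitude as $iu_\ell q+\partial_{w_\ell}q$.
\item You decouple the amplitude by a Fourier series in $u$. This reduces both estimates to statements about the single function $\widehat{\chi b}$. Your slice bound via $\widehat g=c\,\widehat{\tilde\chi}*\widehat g$ and Cauchy--Schwarz is correct as written.
\end{itemize}
Your route uses no $w$-derivatives of $q$ at all, only $u$-regularity uniform in $w$. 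It needs no interpolation and gives $W_2^s$ on the right directly. After the repair it needs only $M>s+N/2$ derivatives in $u$, whereas the paper's Schur step alone integrates by parts $2L>N$ times. The paper's route is more standard, and its slice estimate simply reuses \eqref{e:50} for finitely many modified amplitudes.
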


\begin{proof}
We first prove the unweighted $L^2$ estimate. The kernel of $T_qT_q^*$ is
\begin{align*}
        \calK(w,w')  &= \int_K e^{i(w-w')\cdot u}  q(w,u)\overline{q(w',u)}\,du.
\end{align*}
Choose an integer $L>N/2$. Since
$$
     (1-\Delta_u)^L e^{i(w-w')\cdot u} =(1+|w-w'|^2)^L e^{i(w-w')\cdot u}.
$$
Integration by parts gives
\begin{align*}
        |\calK(w,w')|  &\le  C_L(1+|w-w'|^2)^{-L}   \sum_{|\gamma|\le 2L}
        \int_K  |\partial_u^\gamma(q(w,u)\overline{q(w',u)})|\,du    \le C_L(1+|w-w'|)^{-2L}.
\end{align*}
The last kernel is integrable in both variables. Schur's lemma yields
\begin{equation}\label{e:52}
        \|T_qb\|_{L^2_w}\le C\|b\|_{L^2_u}.
\end{equation}

Let $n$ be a non-negative integer. For $|\mu|\le n$,
$$
        w^\mu e^{iw\cdot u}=(-i)^{|\mu|}\partial_u^\mu e^{iw\cdot u}.
$$
After integrating by parts in $u$,
\begin{align*}
        w^\mu T_qb(w) &=  \sum_{\nu\le\mu} c_{\mu\nu}  \int_K e^{iw\cdot u}  \partial_u^{\mu-\nu}q(w,u)\,\partial_u^\nu b(u)\,du.
\end{align*}
Each amplitude $\partial_u^{\mu-\nu}q$ satisfies the same finite support and symbol bounds. Applying \eqref{e:52} to every term gives
$$
     \sum_{|\mu|\le n}\|w^\mu T_qb\|_2 \le C_n\sum_{|\nu|\le n}\|\partial_u^\nu b\|_2  \le C_n\|b\|_{W^n_2}.
$$
Since $(1+|w|)^n\lesssim 1+\sum_{1\le|\mu|\le n}|w^\mu|$, we obtain
$$
     \|(1+|w|)^nT_qb\|_2\le C_n\|b\|_{W^n_2}.
$$
Complex interpolation between adjacent integer orders proves \eqref{e:50} for every $0\le  s<\sigma$.

For \eqref{e:51}, fix $J\subset\{1,\ldots,N\}$ and put $J^c=\{1,\ldots,N\}\setminus J$. 
Choose an integer $m>|J^c|/2$. 
The Sobolev embedding $W^m_2(\bbR^{J^c})\hookrightarrow L^\infty(\bbR^{J^c})$, applied to the function $w_{J^c}\mapsto (1+|w|)^sT_qb(w)$ and then integrated in $w_J$, gives
\begin{align*}
        \sup_{w_{J^c}}  \int_{\bbR^J}(1+|w|)^{2s}|T_qb(w)|^2\,dw_J
        &\le   C\sum_{|\rho|\le m}  \int_{\bbR^N}  \left| \partial_{w_{J^c}}^\rho\bigl[(1+|w|)^sT_qb(w)\bigr]   \right|^2\,dw.
\end{align*}
A derivative in $w_{J^c}$ has only two effects. If it falls on the weight, the weight order drops from $s$ to at most $s$. If
it falls on $T_qb$, then
$$
    \partial_{w_\ell}T_qb(w) = \int e^{iw\cdot u} \bigl(iu_\ell q(w,u)+\partial_{w_\ell}q(w,u)\bigr)b(u)\,du,
$$
and higher derivatives have the same form with another admissible compactly supported amplitude, because $u\in K$. 
Hence every term on the right is bounded by \eqref{e:50}, with $q$ replaced by one of finitely many amplitudes satisfying \eqref{e:49}. 
This proves the mixed estimate.
\end{proof}

\subsection{Product Bessel Sobolev localization}

We now prove the product Bessel estimate needed for the chamber condition. The result is an $L^2$-estimate, not a pointwise
estimate.

\begin{lemma}
\label{l:10} Let $\lambda\ge0$, let $a\in\{0,1\}$, and let $L\ge0$. 
There are $\chi\in  C_c^\infty([0,2))$, with $\chi=1$ on $[0,1]$, and smooth functions $e_a^{(\lambda)}$, $b_{a,+}^{(\lambda)}$, $b_{a,-}^{(\lambda)}$ such that, for $t\ge0$,
\begin{equation}\label{e:53}
        \varphi_a^{(\lambda)}(t) =  \chi(t)e_a^{(\lambda)}(t)  +(1-\chi(t)) \sum_{\sigma=\pm1}e^{i\sigma t}b_{a,\sigma}^{(\lambda)}(t),
\end{equation}
and
\begin{equation}\label{e:54}
        \left|\frac{d^\ell}{dt^\ell}e_a^{(\lambda)}(t)\right|  \le C_{\ell,\lambda}, \qquad 0\le t\le2,
\end{equation}
while
\begin{equation}\label{e:55}
        \left|\frac{d^\ell}{dt^\ell}b_{a,\sigma}^{(\lambda)}(t)\right|  \le C_{\ell,\lambda}(1+t)^{-\lambda-\ell},  \qquad t\ge1,
\end{equation}
for every $0\le\ell\le L$ and $\sigma=\pm1$.
\end{lemma}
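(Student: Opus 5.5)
The plan is to read the decomposition off the classical Hankel asymptotics of Bessel functions. Write $\nu=\lambda-1/2+a$. Since $j_\nu(t)=2^\nu\Gamma(\nu+1)t^{-\nu}J_\nu(t)$, we have
$$
\varphi^{(\lambda)}_0(t)=c_\lambda t^{-(\lambda-1/2)}J_{\lambda-1/2}(t),\qquad
\varphi^{(\lambda)}_1(t)=c'_\lambda t^{-(\lambda-1/2)}J_{\lambda+1/2}(t).
$$
So in both cases $\varphi_a^{(\lambda)}(t)=c\,t^{-(\lambda-1/2)}J_\nu(t)$ with $\nu\in\{\lambda-1/2,\lambda+1/2\}$. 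Fix any $\chi\in C_c^\infty([0,2))$ with $\chi=1$ on $[0,1]$.

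\emph{Near the origin.} Take $e_a^{(\lambda)}=\varphi_a^{(\lambda)}$ itself. The power series $j_\nu(t)=\Gamma(\nu+1)\sum_k (-1)^k (t/2)^{2k}/(k!\,\Gamma(\nu+k+1))$ is entire and even in $t$, hence smooth on $[0,2]$ with all derivatives bounded. For $a=1$ we get $t\,j_{\lambda+1/2}(t)$, which is likewise smooth. This gives \eqref{e:54}, where the constants depend only on $\ell$ and $\lambda$.

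\emph{Away from the origin.} Use the Hankel functions: $J_\nu=\tfrac12(H^{(1)}_\nu+H^{(2)}_\nu)$ with
$$
H^{(1,2)}_\nu(t)=\Bigl(\frac{2}{\pi t}\Bigr)^{1/2}e^{\pm i(t-\nu\pi/2-\pi/4)}\,\omega_\nu^{\pm}(t).
$$
Set
$$
b_{a,\pm}^{(\lambda)}(t)=\tfrac12 c\,(2/\pi)^{1/2}e^{\mp i(\nu\pi/2+\pi/4)}\,t^{-\lambda}\omega_\nu^{\pm}(t)
$$
on $t\ge1/2$, and extend it smoothly, for example by multiplying by a cutoff equal to $1$ on $[1,\infty)$. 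This is harmless because $1-\chi$ vanishes on $[0,1]$. The identity \eqref{e:53} then holds on $[0,1]$ trivially and on $[1,2]$ because $\chi+(1-\chi)=1$ and both pieces equal $\varphi_a^{(\lambda)}$ there. Note that the power of $t$ is $-(\lambda-1/2)-1/2=-\lambda$, which is why the decay exponent in \eqref{e:55} is $\lambda$ for both $a=0$ and $a=1$.

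\emph{Main obstacle: the symbol bounds.} The key fact needed for \eqref{e:55} is that $\omega_\nu^{\pm}$ is a symbol of order $0$, namely $|\partial_t^\ell\omega_\nu^\pm(t)|\le C_{\ell,\nu}t^{-\ell}$ for $t\ge1$. I would prove this from the Poisson-type integral representation (valid for $\operatorname{Re}\nu>-1/2$, which holds here since $\nu\ge-1/2$)
$$
\omega_\nu^{\pm}(t)=\frac{1}{\Gamma(\nu+1/2)}\int_0^\infty e^{-s}s^{\nu-1/2}\Bigl(1\pm\frac{is}{2t}\Bigr)^{\nu-1/2}ds.
$$
Differentiating under the integral, each $t$-derivative produces a factor $O(s/t^2)\cdot(1\pm is/2t)^{-1}$, and these stay bounded by $C t^{-1}(1+s)$ since $|1\pm is/2t|\ge1$. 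The exponential weight absorbs the polynomial growth in $s$. The degenerate case $\nu=-1/2$ (that is, $\lambda=0$, $a=0$) is handled explicitly: there $\varphi_0^{(0)}(t)=\cos t$, so $b$ is constant.

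Finally, combine with Leibniz's rule: $\partial^\ell\bigl(t^{-\lambda}\omega\bigr)$ is bounded by $t^{-\lambda-\ell}\simeq(1+t)^{-\lambda-\ell}$ on $t\ge1$. This yields \eqref{e:55} for all $\ell\le L$ (indeed for every $\ell$).
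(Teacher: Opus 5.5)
Your proof is correct and follows the paper's route: you write $\varphi_a^{(\lambda)}(t)=c\,t^{1/2-\lambda}J_\nu(t)$, keep $\varphi_a^{(\lambda)}$ itself near the origin, and for large $t$ split $J_\nu$ into $e^{\pm it}$ times amplitudes of order $-1/2$, so that the factor $t^{1/2-\lambda}$ gives the decay exponent $\lambda$ in \eqref{e:55}. The only difference is how the amplitude bounds are justified. You derive them from the integral representation $\omega_\nu^{\pm}(t)=\Gamma(\nu+\tfrac12)^{-1}\int_0^\infty e^{-s}s^{\nu-1/2}(1\pm \tfrac{is}{2t})^{\nu-1/2}\,ds$, treating $\nu=-\tfrac12$ separately via $\varphi_0^{(0)}(t)=\cos t$, whereas the paper appeals to term-by-term differentiation of the classical asymptotic expansion; your version is more self-contained.
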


\begin{proof}
Near zero, $j_{\lambda-1/2}$ and $t j_{\lambda+1/2}(t)$ are smooth functions of $t$, so the local part is immediate. 
For $t\ge1$, the classical asymptotic expansion of $J_\nu$, differentiated term by term a finite number of times, gives
$$
     J_\nu(t)=e^{it}c_{\nu,+}(t)+e^{-it}c_{\nu,-}(t),  \qquad |c_{\nu,\sigma}^{(\ell)}(t)|\le C_{\ell,\nu}t^{-1/2-\ell}.
$$
Since $j_{\lambda-1/2}(t)=C_\lambda t^{-\lambda+1/2}J_{\lambda-1/2}(t)$ and 
$t j_{\lambda+1/2}(t)=C_\lambda t^{-\lambda+1/2}J_{\lambda+1/2}(t)$, both large-argument amplitudes obey \eqref{e:55}.
Multiplying by $1-\chi$ preserves these estimates.
\end{proof}

\begin{proposition}
\label{p:4} 
Let $0<c_0<N^{-1/2}$, let $0<s<\sigma$, and let $a_j$ be measurable functions on $\calC$ such that
\begin{equation}\label{e:56}
        \operatorname*{ess\,supp}a_j  \subset \Gamma_{c_0}\cap\{R\in\calC:2^{j-1}\le |R|\le2^{j+1}\}.
\end{equation}
Assume that, after extension by zero outside $\calC$,
\begin{equation}\label{e:57}
        \sup_{j\in\bbZ} \left\|a_j(2^j\cdot)\right\|_{W_2^\sigma(\bbR^N)} \le A.
\end{equation}
Fix $\alpha,\beta\in\{0,1\}^N$ and define
\begin{equation}\label{e:58}
        L_j^{\alpha\beta}(x,y) = \int_\calC a_j(R)\Phi_\alpha(x,R)\Phi_\beta(y,R) \,d\omega(R).
\end{equation}
Then, with $r_j=2^{-j}$,
\begin{equation}\label{e:59}
        V(y,r_j)^{1/2} \left( \int_\calC  \left(1+\frac{|x-y|}{r_j}\right)^{2s}  |L_j^{\alpha\beta}(x,y)|^2\,d\omega(x) \right)^{1/2}  \le C A.
\end{equation}
The symmetric estimate in the $x$-variable also holds. Moreover, if $|y-y'|\le r_j$, then
\begin{align}
        &V(y,r_j)^{1/2} \left(  \int_\calC  \left(1+\frac{|x-y|}{r_j}\right)^{2s}  |L_j^{\alpha\beta}(x,y)-L_j^{\alpha\beta}(x,y')|^2  \,d\omega(x)  \right)^{1/2} 
        \le  C A\left(\frac{|y-y'|}{r_j}\right) \label{e:60}.
\end{align}
The symmetric first difference in the $x$-variable also holds. The constant is independent of $j,A,x,y$.
\end{proposition}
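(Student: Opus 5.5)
The plan is to reduce to the unit scale $j=0$ by homogeneity, then expand each Bessel factor into local and oscillatory pieces with Lemma \ref{l:10}, and estimate every resulting term with the Fourier--Sobolev Lemma \ref{l:9}.

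\emph{Step 1: rescaling.} Since $\Phi_\alpha(x,R)$ depends only on the products $x_iR_i$ and $d\omega$ is homogeneous of degree $N_\kappa$, the substitution $R=2^ju$ gives
$$L_j^{\alpha\beta}(x,y)=2^{jN_\kappa}\widetilde L^{\alpha\beta}(2^jx,2^jy),$$
where $\widetilde L$ is built from $b=a_j(2^j\cdot)$. By \eqref{e:38}, $V(y,r_j)=2^{-jN_\kappa}V(2^jy,1)$. Hence \eqref{e:59} and \eqref{e:60} reduce to the case $j=0$, $r_j=1$, with $b$ supported in the fixed compact set
$$K=\Gamma_{c_0}\cap\{1/2\le|u|\le2\},\qquad \|b\|_{W_2^\sigma}\le A.$$
The wall separation is used exactly here: on $K$ every coordinate satisfies $u_i\simeq 1$. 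Consequently the weight $u^{2\kappa}$ is smooth on $K$. Moreover, each derivative $\partial_{u_i}$ of $b^{(\kappa_i)}_{a,\sigma}(x_iu_i)$ produces a factor $x_i$ together with one extra order of decay $(1+x_iu_i)^{-1}$ from \eqref{e:55}. Thus every $u$-derivative of an amplitude stays bounded by $(1+x_i)^{-\kappa_i}$, uniformly in $x$.

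\emph{Step 2: tensor decomposition.} Apply \eqref{e:53} in every coordinate to both $\varphi_{\alpha_i}(x_iu_i)$ and $\varphi_{\beta_i}(y_iu_i)$. This writes $\widetilde L$ as a finite sum of terms $\int e^{i w\cdot u}q(x,y,u)b(u)\,du$. In coordinate $i$ the phase variable $w_i$ is one of $0$, $\pm x_i$, $\pm y_i$, $\pm(x_i-y_i)$, $\pm(x_i+y_i)$, according as the $i$th factor is local--local, oscillatory in one variable only, or oscillatory in both (with the two possible relative signs). The amplitude $q$ is a product of three kinds of factors:
\begin{itemize}
\item cutoffs $\chi$, $1-\chi$;
\item smooth bounded functions $e_a$, restricted to $x_i\lesssim 1$ or $y_i\lesssim1$;
\item decaying factors $b_{a,\sigma}$, contributing $(1+x_i)^{-\kappa_i}(1+y_i)^{-\kappa_i}$ in size.
\end{itemize}
Factoring these size weights out, the rescaled amplitude satisfies \eqref{e:49} uniformly in the parameters $(x,y)$.

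The weight $(1+|x-y|)^{s}$ is dominated by $(1+|w|)^s$ times a constant in every case. For coordinates with $x_i\lesssim1$ or $y_i\lesssim1$ we have $|x_i-y_i|\lesssim 1+|w_i|$. For the sum phase, $|x_i-y_i|\le x_i+y_i$.

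For the $\omega$-integral in $x$, use $x_i^{2\kappa_i}(1+x_i)^{-2\kappa_i}\le1$ on the oscillatory coordinates and boundedness of the $x_i$-range on the local coordinates. This leaves the factor $\prod_i(1+y_i)^{-2\kappa_i}\simeq V(y,1)^{-1}$ from the $y$-amplitudes, which is exactly what \eqref{e:59} requires. The remaining weighted $L^2$ norm, in the variables $x_J$ entering the phase with $y$ frozen, is controlled by the mixed estimate \eqref{e:51} after the unimodular change of variables $x_i\mapsto w_i$. Coordinates whose phase does not involve $x_i$ are handled by the frozen-variable supremum, since the $x_i$-range there is bounded.

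\emph{Step 3: the difference estimate.} The chamber is convex, so I would write
$$L(x,y)-L(x,y')=\int_0^1 (y'-y)\cdot\nabla_y L(x,y+t(y'-y))\,dt.$$
The derivative $\partial_{y_i}$ either produces a factor $iu_i\sigma$ or differentiates an amplitude. Either way the result is an amplitude of the same class, bounded by $(1+y_i)^{-\kappa_i}$. For the intermediate point $y''=y+t(y'-y)$ we have $|y-y'|\le1$, so $1+|x-y''|\simeq 1+|x-y|$ and $V(y'',1)\simeq V(y,1)$. Minkowski's inequality in $t$ then gives \eqref{e:60} with the factor $|y-y'|$. The $x$-variable estimates are symmetric.

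\emph{Main obstacle.} The delicate point is that Lemma \ref{l:9} is stated for amplitudes $q(w,u)$, whereas here the amplitudes depend on $(x,y)$ and not only on $w$. I would rerun the $TT^*$ and integration-by-parts argument of Lemma \ref{l:9} with the extra variables treated as parameters, checking that only the bounds \eqref{e:49} enter. For the mixed estimate, the $w_J$-derivatives landing on $x$-dependent amplitudes have the same admissible form. This check relies on the wall-separation property $u_i\simeq1$ noted in Step 1.
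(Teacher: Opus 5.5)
Your proposal is correct and follows the paper's proof essentially step for step. The shared steps are: rescaling to unit scale, the local/oscillatory splitting of Lemma \ref{l:10} in every coordinate, the bound $1+|x-y|\lesssim 1+|\Lambda(x,y)|$, cancellation of $V(y,1)$ against the decaying Bessel amplitudes, the mixed estimate \eqref{e:51} after the unimodular change of variables, and the segment argument with $V(y_t,1)\simeq V(y,1)$ for the first differences. The obstacle you flag is genuine but harmless, and your treatment is more careful than the paper's \eqref{e:65}, which suppresses the dependence of the amplitude on $x_J$ and $y_L$: with $y$ and $x_{J^c}$ frozen, the amplitude is a function of $(w_J,u)$ satisfying \eqref{e:49} precisely because $u_i\gtrsim 1$ on $\Gamma_{c_0}$, so Lemma \ref{l:9} applies as stated.
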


\begin{proof}
We prove the $y$-size and $y$-difference estimates. The $x$ estimates follow from the same argument with the two Bessel factors interchanged. 
The proof is written with the dyadic scaling visible, because this is the point where the frequency scale $2^j$, the physical scale $r_j=2^{-j}$, and the chamber volume normalization meet.

\noindent\textit{Step 1: exact dyadic rescaling.} Put
$$
   R=2^ju, \qquad  \widetilde x=2^jx, \qquad \widetilde y=2^jy, \qquad \widetilde y'=2^jy', \qquad  b_j(u)=a_j(2^ju).
$$
Then $\operatorname*{ess\,supp} b_j\subset K_{c_0}:=\Gamma_{c_0}\cap\{u\in\calC:1/2\le |u|\le2\}$ and $\|b_j\|_{W_2^\sigma}\le A$. 
Here we use that $\Gamma_{c_0}$ is conic. Homogeneity of $d\omega$ gives
\begin{equation}\label{e:61}
        d\omega(2^{-j}\widetilde x)=2^{-jN_\kappa}d\omega(\widetilde x),  \qquad  V(y,2^{-j})=2^{-jN_\kappa}V(2^jy,1).
\end{equation}
Moreover, if
$$
    \widetilde L_{j}^{\alpha\beta}(\widetilde x,\widetilde y)  :=  \int_\calC b_j(u)\Phi_\alpha(\widetilde x,u)\Phi_\beta(\widetilde y,u)   \,d\omega(u),
$$
then
\begin{equation}\label{e:62}
        L_j^{\alpha\beta}(x,y) =2^{jN_\kappa}  \widetilde L_j^{\alpha\beta}(2^jx,2^jy).
\end{equation}
Consequently
\begin{align}
        &V(y,r_j)^{1/2} \left(\int_\calC  \left(1+\frac{|x-y|}{r_j}\right)^{2s}  |L_j^{\alpha\beta}(x,y)|^2d\omega(x)\right)^{1/2}        \notag\\
        &= V(\widetilde y,1)^{1/2}\left(\int_\calC (1+|\widetilde x-\widetilde y|)^{2s} |\widetilde L_j^{\alpha\beta}(\widetilde x,\widetilde y)|^2 d\omega(\widetilde x)\right)^{1/2} \label{e:63}.
\end{align}
Similarly,
\begin{align}
        &V(y,r_j)^{1/2}  \left(\int_\calC  \left(1+\frac{|x-y|}{r_j}\right)^{2s}   |L_j^{\alpha\beta}(x,y)-L_j^{\alpha\beta}(x,y')|^2d\omega(x)\right)^{1/2}        \notag\\
        &=  V(\widetilde y,1)^{1/2}  \left(\int_\calC   (1+|\widetilde x-\widetilde y|)^{2s}   |\widetilde L_j^{\alpha\beta}(\widetilde x,\widetilde y)
        -\widetilde L_j^{\alpha\beta}(\widetilde x,\widetilde y')|^2 d\omega(\widetilde x)\right)^{1/2} \label{e:64}.
\end{align}
Thus it is enough to prove the estimates at physical scale $1$, uniformly for all symbols $b$ supported in $K_{c_0}$ with $\|b\|_{W_2^\sigma}\le A$. 
For notational economy we now write $b$ and $L^{\alpha\beta}$ instead of $b_j$ and $\widetilde  L_j^{\alpha\beta}$.

\noindent\textit{Step 2: finite Bessel expansion.} Apply Lemma \ref{l:10} to the one-dimensional factors in the product $\Phi_\alpha(x,u)\Phi_\beta(y,u)$. 
The kernel is a finite sum of pieces of the following form
\begin{equation}\label{e:65}
        I(x,y)=A_x(x)A_y(y)  \int_{K_{c_0}}e^{i\Lambda(x,y)\cdot u} q(x_{J^c},y_{L^c},u)b(u)\,du.
\end{equation}
Here $J$ is the set of coordinates in which the $x_i$-factor is in its large oscillatory regime, and $L$ is the analogous  set for the $y_i$-factor. 
If $i\notin J$, then $x_i$ is confined to a fixed bounded interval; if $i\notin L$, then $y_i$ is confined to a fixed bounded interval. The phase has the form
$$
     \Lambda_i(x,y)=\epsilon_i x_i+\delta_i y_i, \qquad  \epsilon_i,\delta_i\in\{0,\pm1\},
$$
with $\epsilon_i\ne0$ for $i\in J$ and $\epsilon_i=0$ for $i\notin J$; similarly $\delta_i\ne0$ for $i\in L$ and $\delta_i=0$ for $i\notin L$. 
The amplitude is uniformly smooth:
\begin{equation}\label{e:66}
        \sup_{x_{J^c},y_{L^c},u}  |\partial_{x_{J^c}}^\mu\partial_{y_{L^c}}^\nu\partial_u^\gamma   q(x_{J^c},   y_{L^c},   u)|
        \le C_{\mu\nu\gamma}.
\end{equation}
The external factors obey
\begin{equation}\label{e:67}
        |A_x(x)|  \le C\prod_{i\in J}(1+x_i)^{-\kappa_i},  \qquad  |A_y(y)|  \le C\prod_{i\in L}(1+y_i)^{-\kappa_i}.
\end{equation}
On the support of the piece,
\begin{equation}\label{e:68}
        1+|x-y|\le C\bigl(1+|\Lambda(x,y)|\bigr).
\end{equation}
Indeed, if both variables are oscillatory in coordinate $i$, then $|x_i-y_i|\le x_i+y_i$ in the same-sign case and is directly controlled in the opposite-sign case. 
If only one variable is oscillatory, the other is bounded; if neither is oscillatory, both are bounded.

\noindent\textit{Step 3: cancellation of volume growth.} By \eqref{e:38} with $r=1$,
$$
        V(y,1)\simeq \prod_{i=1}^N(1+y_i)^{2\kappa_i}.
$$
For $i\in L$ the factor $(1+y_i)^{-\kappa_i}$ in $A_y$ cancels the corresponding half-volume factor; for $i\notin L$, $y_i$ is bounded on the piece. 
Hence
$$
        V(y,1)|A_y(y)|^2\le C.
$$
Similarly,
\begin{equation}\label{e:70}
        |A_x(x)|^2d\omega(x) \le C\,dx_J\,dx_{J^c},
\end{equation}
where $x_{J^c}$ is restricted to a fixed bounded set. For fixed $x_{J^c}$ and $y$, the change of variables
$$
        w_J=(\epsilon_i x_i+\delta_i y_i)_{i\in J}
$$
has absolute Jacobian one. The remaining components $w_{J^c}$ are fixed numbers depending only on $y$ and on the local regime.

\noindent\textit{Step 4: the size estimate at scale one.} Using \eqref{e:68}--
\eqref{e:70}, we get
\begin{align*}
        &V(y,1) \int_\calC(1+|x-y|)^{2s}|I(x,y)|^2d\omega(x)       \\
        &\le   C\int_{E_{J^c}} \int_{\bbR^J}(1+|w|)^{2s}  \left| \int_{K_{c_0}}e^{iw\cdot u} q(x_{J^c},y_{L^c},u)b(u)\,du   \right|^2dw_Jdx_{J^c},
\end{align*}
where $E_{J^c}$ is a bounded set depending only on $c_0,N$ and on the fixed cutoff in Lemma \ref{l:10}. 
For each fixed $x_{J^c}$ and $y_{L^c}$, Lemma \ref{l:9}  applies to $q(x_{J^c},y_{L^c},\cdot)$ uniformly and gives
$$
\left(\int_{\bbR^J}(1+|w|)^{2s}
        \left|  \int_{K_{c_0}}e^{iw\cdot u}  q(x_{J^c},y_{L^c},u)b(u)\,du  \right|^2dw_J\right)^{1/2} \le C\|b\|_{W_2^\sigma}.
$$
Since $E_{J^c}$ has bounded measure,
\begin{equation}\label{e:71}
        V(y,1)^{1/2}  \left(\int_\calC(1+|x-y|)^{2s}|I(x,y)|^2d\omega(x)\right)^{1/2}  \le CA.
\end{equation}
There are only finitely many Bessel pieces, so the same estimate holds for $L^{\alpha\beta}$.

\noindent\textit{Step 5: first differences at scale one.} Assume $|y-y'|\le1$ and put $y_t=y+t(y'-y)$. Since $\calC$ is
convex,
$$
       L^{\alpha\beta}(x,y)-L^{\alpha\beta}(x,y') =- \int_0^1(y'-y)\cdot\nabla_YL^{\alpha\beta}(x,y_t)\,dt.
$$
A derivative $\partial_{y_i}$ of a piece \eqref{e:65} has the same structural form. When it hits the
phase, one obtains
$$
        \partial_{y_i}e^{i\Lambda(x,y)\cdot u} =i\delta_iu_i e^{i\Lambda(x,y)\cdot u},
$$
and $u_i$ is bounded on $K_{c_0}$. When it hits a local cutoff, the support remains in the corresponding bounded local  regime. 
When it hits a large Bessel amplitude, \eqref{e:55} gives the same external bound with one additional harmless power of $(1+y_i)^{-1}$. 
Thus the size estimate \eqref{e:71} applies uniformly to every component of $\nabla_YL^{\alpha\beta}$.

Because $|y_t-y|\le1$, \eqref{e:38} gives
$$
   V(y_t,1)\simeq V(y,1), \qquad 1+|x-y_t|\simeq1+|x-y|,
$$
with constants independent of $t$. Therefore
\begin{align*}
        &V(y,1)^{1/2}  \left(\int_\calC(1+|x-y|)^{2s}  |L^{\alpha\beta}(x,y)-L^{\alpha\beta}(x,y')|^2d\omega(x)  \right)^{1/2}       \\
        &\le  |y-y'|\int_0^1 V(y_t,1)^{1/2} \left(\int_\calC(1+|x-y_t|)^{2s}  |\nabla_YL^{\alpha\beta}(x,y_t)|^2d\omega(x)\right)^{1/2}dt       \\
        &\le CA|y-y'|.
\end{align*}
Combining this scale-one estimate with \eqref{e:64} and $|\widetilde y-\widetilde y'|=|y-y'|/r_j$ gives \eqref{e:60}. 
Combining \eqref{e:71} with \eqref{e:63} gives \eqref{e:59}. This completes the proof.
\end{proof}

\subsection{Scalar Sobolev estimates imply \texorpdfstring{$\CH^2_{s,1}$}{CH2s1}}

\begin{theorem}
\label{t:7} Let $m\in L^\infty(\bbR^N)$. Assume \eqref{e:45} and
$\calS_\sigma(m)<\infty$ for some
$$
        \sigma>\frac{N_\kappa}{2}.
$$
Choose $s$ with
$$
        \frac{N_\kappa}{2}<s<\sigma.
$$
Then the lifted chamber matrix satisfies
$$
        \mathbb T_m\in\CH^2_{s,1}
$$
with first-difference exponent $1$. More precisely,
\begin{equation}\label{e:72}
        \norm{\mathbb T_m}_{\CH^2_{s,1}} \le C_{N,\kappa,s,\sigma,c_0,\psi}\,\calS_\sigma(m).
\end{equation}
\end{theorem}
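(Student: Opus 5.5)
The plan is to reduce the statement entry by entry to Proposition \ref{p:4}, applied to the dyadic Walsh amplitudes. Fix $\alpha,\beta\in\{0,1\}^N$ and set $\theta=\alpha+\beta$ (mod two). By \eqref{e:43}, the dyadic kernel of the $(\alpha,\beta)$-entry is
\[
K_j^{\alpha\beta}=c_{\alpha\beta,\kappa}\,L_j^{\alpha\beta},
\]
where $L_j^{\alpha\beta}$ is as in \eqref{e:58} with $a_j=a_{\theta,j}=\psi_j(|\cdot|)m_\theta$.

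First I check the hypotheses of Proposition \ref{p:4}. Since $\psi\in C_c^\infty((1/2,2))$, the function $\psi_j(|R|)$ is supported in $2^{j-1}\le|R|\le 2^{j+1}$. By \eqref{e:45}, $m_\theta$ is essentially supported in the cone $\Gamma_{c_0}$. Together these give \eqref{e:56}. Condition \eqref{e:57} holds with $A=\calS_\sigma(m)$, directly from the definition \eqref{e:46}.

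With $N_\kappa/2<s<\sigma$ fixed, Proposition \ref{p:4} then yields the four estimates \eqref{e:26}--\eqref{e:29}, with $\eta=1$ and constant $C\,\calS_\sigma(m)$. This comes from the $y$-size estimate \eqref{e:59}, the $y$-difference estimate \eqref{e:60}, and their symmetric $x$-versions. The constant depends only on $N,\kappa,s,\sigma,c_0,\psi$, and it is uniform in $j$. Taking the maximum over the finitely many pairs $(\alpha,\beta)$ and absorbing the harmless constants $c_{\alpha\beta,\kappa}$ gives \eqref{e:72} for the quantitative part.

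The remaining requirement in Definition \ref{d:3} is the representation clause: the dyadic kernels must sum to the off-diagonal kernel of $\mathbb T_{\alpha\beta}$. I would handle it as follows.

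1. For bounded compactly supported $F$ and $x\notin\supp F$, Lemma \ref{l:6}, specifically \eqref{e:31} with $a\simeq \operatorname{dist}(x,\supp F)/r_j$, shows that $\sum_j|K_j^{\alpha\beta}(x,\cdot)|$ is integrable against $|F|$ away from the diagonal for the scales $j>j_0$. The summand decays like $(r_j/\delta)^{s-N_\kappa/2}$, so this tail converges.
2. For the coarse scales $j\le j_0$, I would use the pointwise bound $|L_j(x,y)|\lesssim\int|a_j|\,|\Phi_\alpha||\Phi_\beta|\,d\omega$. Since $|\varphi_a^{(\lambda)}|\le C$, this gives $|L_j(x,y)|\lesssim\|m\|_\infty\,2^{jN_\kappa}$, which is summable as $j\to-\infty$. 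Hence $K^{\alpha\beta}=\sum_jK_j^{\alpha\beta}$ converges absolutely locally off the diagonal.
3. Separately, $\sum_{|j|\le J}\mathbb T_{\alpha\beta,j}\to\mathbb T_{\alpha\beta}$ in $L^2$, by Plancherel and dominated convergence on the symbol side, since $\sum_{|j|\le J}\psi_j\to1$ boundedly.
4. Pairing with a test function $G$ supported away from $\supp F$ and using Fubini identifies the limit kernel with the off-diagonal kernel.

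Alternatively, as Definition \ref{d:3} allows, it suffices to note that all estimates hold for finite dyadic truncations with constants independent of $J$.

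I expect the bookkeeping to be routine, because the analytic content sits in Proposition \ref{p:4}. The main obstacle is this representation/convergence step, in particular justifying the coarse-scale sum without extra smoothness. If the pointwise bound in step 2 turns out to be awkward, I would fall back on the truncation formulation: prove \eqref{e:26}--\eqref{e:29} for finite sums of pieces and pass to the $L^2$ limit.
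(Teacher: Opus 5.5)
Your proposal is correct and follows the paper's proof. Both arguments reduce each $(\alpha,\beta)$-entry to Proposition \ref{p:4} with $a_j=a_{\alpha+\beta,j}$ and $A=\calS_\sigma(m)$, take the maximum over the finitely many entries, and settle the kernel representation clause through the uniform-in-truncation formulation allowed in Definition \ref{d:3}; your explicit absolute-convergence argument for $\sum_j K_j^{\alpha\beta}$ off the diagonal (tail bounds from Lemma \ref{l:6} at fine scales, $|L_j|\lesssim\|m\|_\infty 2^{jN_\kappa}$ at coarse scales, and $L^2$ convergence of the truncations) is a valid and more detailed version of the step the paper only sketches.
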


\begin{proof}
Fix $\alpha,\beta\in\{0,1\}^N$, and put $\theta=\alpha+\beta$ modulo two. The dyadic Walsh symbol $a_{\theta,j}$ satisfies
the support condition \eqref{e:56}, and by definition of $\calS_\sigma(m)$,
$$
      \sup_j \left\|a_{\theta,j}(2^j\cdot)\right\|_{W_2^\sigma}  \le \calS_\sigma(m).
$$
Proposition \ref{p:4} gives exactly the size and first-difference estimates \eqref{e:26}--\eqref{e:29} for the kernel \eqref{e:43}. 
The harmless constants $c_{\alpha\beta,\kappa}$ are absorbed into $C$. 
Taking the maximum over the finitely many pairs $(\alpha,\beta)$ gives the estimate \eqref{e:72}.

It remains only to record the off-diagonal kernel representation in Definition \ref{d:3}. For finite dyadic sums the kernel identity follows directly from the spectral definition of $\mathbb T_m$. 
The estimates above are uniform in the truncation. 
Hence the associated Calder\'on--Zygmund estimates are uniform for finite sums, and the full operator is obtained as the $L^2$-limit of the dyadic multiplier truncations. 
This is the equivalent kernel representation formulation allowed in Definition \ref{d:3}.
\end{proof}

\subsection{The resulting scalar multiplier theorem}

\begin{corollary}
\label{c:3} 
Let $\Sigma_N\simeq\bbZ_2^N$, let $m\in L^\infty(\bbR^N)$, and suppose that \eqref{e:45} holds and that $\calM_M(m)<\infty$ for some integer
$$
        M>\frac{N_\kappa}{2}.
$$
Then $T_m$, initially defined on $L^2$, extends boundedly to $L^p(\bbR^N,d\omega)$ for every $1<p<\infty$. More precisely,
\begin{equation}\label{e:73}
        \norm{T_mf}_{L^p(\bbR^N,d\omega)} \le C_{p,N,\kappa,M,c_0,\psi} \left(\norm{m}_\infty+\calM_M(m)\right) \norm{f}_{L^p(\bbR^N,d\omega)}.
\end{equation}
No radiality condition is imposed on $f$.
\end{corollary}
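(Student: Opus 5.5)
The plan is to chain three results already established: Lemma \ref{l:8}, Theorem \ref{t:7} and Theorem \ref{t:2}. First fix an auxiliary Sobolev order. Since $M$ is an integer with $M>N_\kappa/2$, we can choose $\sigma$ with $N_\kappa/2<\sigma<M$, for instance $\sigma=(M+N_\kappa/2)/2$. We then choose $s$ with $N_\kappa/2<s<\sigma$. These choices depend only on $M,N,\kappa$, so the final constant will depend only on the listed parameters.

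Second, pass from the Mihlin quantity to the Sobolev quantity. Under the wall-separation hypothesis \eqref{e:45} and $\calM_M(m)<\infty$, Lemma \ref{l:8} with $0\le\sigma<M$ gives
\[
\calS_\sigma(m)\le C_{N,M,\sigma,c_0,\psi}\,\calM_M(m).
\]
Third, lift this to the chamber condition. Theorem \ref{t:7} applies, since $\sigma>N_\kappa/2$ and \eqref{e:45} holds. It yields $\mathbb T_m\in\CH^2_{s,1}$ with
\[
\|\mathbb T_m\|_{\CH^2_{s,1}}\le C\,\calS_\sigma(m).
\]
It also supplies the off-diagonal kernel representation required in Definition \ref{d:3}, namely the dyadic truncations with uniform constants and their $L^2$ limit.

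Finally, apply Theorem \ref{t:2} with $\eta=1$ and this $s>N_\kappa/2$. It gives, for $f\in L^2\cap L^p$,
\[
\|T_mf\|_{L^p(d\omega)}\le C\bigl(\|m\|_\infty+\calM_M(m)\bigr)\|f\|_{L^p(d\omega)}.
\]
Density of $L^2\cap L^p$ then extends $T_m$ boundedly to all of $L^p$, $1<p<\infty$. The hypothesis $m\in L^\infty$ enters only through the $L^2$ bound \eqref{e:24} inside Theorem \ref{t:2}.

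The only point needing care is the hypothesis check for Lemma \ref{l:8}. One must confirm that $\calM_M(m)<\infty$ really means each $m_\theta$ is $C^M$ on the open chamber with the stated weighted bounds. Then the rescaled pieces $a_{\theta,j}(2^j\cdot)$, extended by zero, are $C^M$ on $\bbR^N$. This holds because their support lies in the compact set $\Gamma_{c_0}\cap\{1/2\le|u|\le2\}$, which stays a positive distance from the walls $u_i=0$. The extension by zero therefore creates no singularity, and the integer Sobolev norms, and hence the fractional norm of order $\sigma<M$, are uniformly bounded. With that checked, the corollary is a direct composition of the three results, with no new estimates required.
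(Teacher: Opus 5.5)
Your proposal is correct and follows the paper's proof essentially verbatim: choose $N_\kappa/2<s<\sigma<M$, use Lemma~\ref{l:8} to get $\calS_\sigma(m)\le C\calM_M(m)$, apply Theorem~\ref{t:7} to obtain $\mathbb T_m\in\CH^2_{s,1}$, and conclude with the chamber theorem, Theorem~\ref{t:2}. Your extra check, that the zero extension of $a_{\theta,j}(2^j\cdot)$ is $C^M$ because wall separation keeps its support away from the faces $u_i=0$, correctly justifies a step that the paper's proof of Lemma~\ref{l:8} leaves implicit.
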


\begin{proof}[Proof of Corollary \ref{c:3}]
Choose real numbers $s$ and $\sigma$ such that
$$
        \frac{N_\kappa}{2}<s<\sigma<M.
$$
Lemma \ref{l:8} gives $\calS_\sigma(m)\le C\calM_M(m)$. 
Theorem \ref{t:7} gives $\mathbb T_m\in\CH^2_{s,1}$, with $\norm{\mathbb T_m}_{\CH^2_{s,1}}\le C\calM_M(m)$. The main chamber theorem then
gives the result.
\end{proof}

\begin{proof}[Proof of Theorem \ref{t:3}]
Choose $s$ such that
$$
        \frac{N_\kappa}{2}<s<\sigma.
$$
Theorem \ref{t:7} gives $\mathbb T_m\in\CH^2_{s,1}$, with
$$
    \norm{\mathbb T_m}_{\CH^2_{s,1}}  \le C\calS_\sigma(m).
$$
The main chamber theorem, Theorem \ref{t:2}, then gives the desired $L^p$-estimate for every $1<p<\infty$. The
constant depends only on the parameters displayed in the statement.
\end{proof}

\begin{proof}[Proof of Corollary \ref{c:1}]
Choose real numbers $s$ and $\sigma$ such that
$$
        \frac{N_\kappa}{2}<s<\sigma<M.
$$
Lemma \ref{l:8} gives
$$
        \calS_\sigma(m)\le C\calM_M(m).
$$
Theorem \ref{t:3} now gives the claimed estimate.
\end{proof}

\begin{remark}
The theorem above is a concrete sufficient condition for the matrix condition $\CH^2_{s,1}$. 
Wall separation is used only on the frequency side. It prevents the dyadic support from meeting the faces $R_i=0$, where the product Bessel factors have endpoint regimes. 
Removing this condition would require a separate analysis of those endpoint regimes. 
The abstract chamber theorem applies to any multiplier whose dyadic chamber kernels can be estimated directly.
\end{remark}

\begin{example}
Let
$$
        m(\xi)=a\!\left(\frac{\xi}{|\xi|}\right),\qquad \xi\ne0,
$$
where $a\in C^\infty(\mathbb S^{N-1})$ and $a$ is supported in
$$
\left\{\omega\in \mathbb S^{N-1}: |\omega_i|\ge c_0\text{ for every }i\right\}.
$$
Then each Walsh piece satisfies \eqref{e:45} and the Mihlin bounds \eqref{e:47} for every $M$. 
Hence $T_m$ is bounded on $L^p(\bbR^N,d\omega)$ for every $1<p<\infty$ by Corollary \ref{c:3}. 
This gives a non-radial class of homogeneous symbols in the wall-separated angular regime.
\end{example}

\subsection{An inverse-volume integral for pointwise criteria}

The next elementary estimate is not used in the scalar Sobolev proof above. It is used in the pointwise criterion of the following section.

\begin{lemma}
\label{l:11} 
Let $q>N/2$. Then
\begin{equation}\label{e:74}
        \sup_{y\in\calC}\sup_{r>0} \int_\calC   \left(1+\frac{|x-y|}{r}\right)^{-2q}  \frac{d\omega(x)}{V(x,r)}  <\infty.
\end{equation}
The same estimate holds with $x$ and $y$ interchanged.
\end{lemma}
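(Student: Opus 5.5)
The plan is to make the integrand explicitly comparable to a Euclidean Poisson-type kernel by means of the two-sided volume estimate \eqref{e:4} of Lemma \ref{l:1}. Once the Dunkl weight cancels against the inverse volume, the remaining integral is a translation- and dilation-invariant Lebesgue integral. The condition $q>N/2$, with the Euclidean dimension $N$ rather than $N_\kappa$, is exactly what makes that integral converge.

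\emph{Step 1: cancel the weight.} For $x\in\calC$ we have $d\omega(x)=\prod_{i=1}^N x_i^{2\kappa_i}\,dx$. By the lower bound in \eqref{e:4}, $V(x,r)\ge C_1 r^N\prod_{i=1}^N(x_i+r)^{2\kappa_i}$. Hence
$$
\frac{d\omega(x)}{V(x,r)}\le C_1^{-1}\,r^{-N}\prod_{i=1}^N\Bigl(\frac{x_i}{x_i+r}\Bigr)^{2\kappa_i}dx\le C_1^{-1}\,r^{-N}\,dx,
$$
because each ratio $x_i/(x_i+r)$ lies in $(0,1)$ and $\kappa_i\ge0$. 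This is the only point where the specific structure of the weight enters. It is the key observation: the inverse volume absorbs the full weight, so no $N_\kappa$-dependence survives.

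\emph{Step 2: reduce to a Euclidean integral.} Inserting Step 1 and enlarging the domain of integration from $\calC$ to $\bbR^N$, we get
$$
\int_\calC\Bigl(1+\frac{|x-y|}{r}\Bigr)^{-2q}\frac{d\omega(x)}{V(x,r)}\le C_1^{-1}\int_{\bbR^N}\Bigl(1+\frac{|x-y|}{r}\Bigr)^{-2q}r^{-N}\,dx.
$$
The substitution $z=(x-y)/r$ turns the right-hand side into $C_1^{-1}\int_{\bbR^N}(1+|z|)^{-2q}\,dz$. This integral is finite precisely because $2q>N$, and it does not depend on $y$ or $r$. This proves \eqref{e:74}.

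\emph{Step 3: the interchanged estimate.} With the roles of $x$ and $y$ swapped, the integral becomes $\int_\calC(1+|x-y|/r)^{-2q}\,d\omega(y)/V(y,r)$ for fixed $x$. Steps 1 and 2 apply verbatim, with the weight and the volume now both evaluated at the integration variable $y$.

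There is no real obstacle. The one thing to verify is that Step 1 uses only the lower bound in Lemma \ref{l:1}, with constants depending only on $N$ and $\kappa$, so the resulting bound is uniform in $y$ and $r$.
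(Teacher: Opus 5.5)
Your proof is correct and matches the paper's argument: the lower volume bound \eqref{e:4} (equivalently \eqref{e:38}) gives $d\omega(x)/V(x,r)\le C r^{-N}\,dx$. Extending the integral to $\bbR^N$ and rescaling then reduces everything to $\int_{\bbR^N}(1+|u|)^{-2q}\,du<\infty$ for $2q>N$, and the interchanged estimate follows by the same computation in the other variable.
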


\begin{proof}
By \eqref{e:38},
$$
        V(x,r) \simeq    r^N\prod_{i=1}^N(x_i+r)^{2\kappa_i}.
$$
Therefore
$$
   \frac{d\omega(x)}{V(x,r)}= \frac{\prod_i x_i^{2\kappa_i}\,dx}{r^N\prod_i(x_i+r)^{2\kappa_i}}  \le  C r^{-N}\,dx.
$$
It follows that
$$
      \int_\calC \left(1+\frac{|x-y|}{r}\right)^{-2q} \frac{d\omega(x)}{V(x,r)}   \le
       C r^{-N} \int_{\bbR^N} \left(1+\frac{|x-y|}{r}\right)^{-2q}dx = C\int_{\bbR^N}(1+|u|)^{-2q}\,du,
$$
which is finite exactly when $2q>N$. The symmetric estimate is identical.
\end{proof}

\section{Pointwise criteria and chamber models beyond product groups}
\label{s:10}

The main theorem is formulated with the dyadic $L^2$ condition $\CH^2_{s,\eta}$. 
In applications one sometimes obtains pointwise bounds for dyadic kernels first. 
We record two conversion lemmas. The first is the one-sided form with the natural denominator $V(y,r_j)$; the second is the balanced form with denominator $(V(x,r_j)V(y,r_j))^{1/2}$. 
The latter is often what comes directly from heat-kernel or oscillatory-kernel estimates, and it is the reason for Lemma
\ref{l:11}.

\begin{definition}
\label{d:5} 
Let $M>0$ and $0<\eta\le1$. We say that $K_j$ satisfies the pointwise chamber condition if there is a constant $B$ such that, for all $j$ and all $x,y\in\calC$,
\begin{equation}\label{e:75}
        |K_j(x,y)| \le \frac{B}{V(y,r_j)} \left(1+\frac{\rho(x,y)}{r_j}\right)^{-M},
\end{equation}
and, whenever $\rho(y,y')\le r_j$,
\begin{equation}\label{e:76}
        |K_j(x,y)-K_j(x,y')|  \le  \frac{B}{V(y,r_j)}  \left(\frac{\rho(y,y')}{r_j}\right)^\eta \left(1+\frac{\rho(x,y)}{r_j}\right)^{-M}.
\end{equation}
The symmetric estimates in the first variable are also required; explicitly,
\begin{equation}\label{e:77}
        |K_j(x,y)| \le  \frac{B}{V(x,r_j)}  \left(1+\frac{\rho(x,y)}{r_j}\right)^{-M},
\end{equation}
and, whenever $\rho(x,x')\le r_j$,
\begin{equation}\label{e:78}
        |K_j(x,y)-K_j(x',y)|  \le \frac{B}{V(x,r_j)} \left(\frac{\rho(x,x')}{r_j}\right)^\eta  \left(1+\frac{\rho(x,y)}{r_j}\right)^{-M}.
\end{equation}
\end{definition}

\begin{proposition}
\label{p:5} Assume that $K_j$ satisfies Definition \ref{d:5} with order $M$ and difference exponent $\eta$. 
If $M>s+N_\kappa/2$, then $K_j$ satisfies the $\CH^2_{s,\eta}$ estimates of Definition \ref{d:3}. 
Consequently, the pointwise condition implies $\CH^2_{s,\eta}$ for some $s>N_\kappa/2$ whenever $M>N_\kappa$.
\end{proposition}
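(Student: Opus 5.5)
The plan is to insert the pointwise bounds of Definition \ref{d:5} directly into the weighted $L^2$ integrals of Definition \ref{d:3}. The volume normalizations then cancel, and what remains is controlled by Lemma \ref{l:2}. We treat \eqref{e:26} and \eqref{e:28} (integration in $x$, denominator $V(y,r_j)$) using \eqref{e:75}--\eqref{e:76}. The estimates \eqref{e:27} and \eqref{e:29} are symmetric: they use \eqref{e:77}--\eqref{e:78}, with the roles of $x$ and $y$ exchanged, and Lemma \ref{l:2} centred at $x$.

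For the size estimate, \eqref{e:75} gives
$$
\left(1+\frac{\rho(x,y)}{r_j}\right)^{2s}|K_j(x,y)|^2\le \frac{B^2}{V(y,r_j)^2}\left(1+\frac{\rho(x,y)}{r_j}\right)^{2s-2M}.
$$
Integrating in $d\omega(x)$, we apply \eqref{e:7} of Lemma \ref{l:2} with exponent $s'=M-s$, centred at $y$. This is allowed because the hypothesis $M>s+N_\kappa/2$ is exactly $2(M-s)>N_\kappa$. The integral is therefore bounded by $CB^2V(y,r_j)^{-1}$. Multiplying by $V(y,r_j)$ and taking square roots gives \eqref{e:26} with $A=CB$.

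The difference estimate \eqref{e:28} is identical. The pointwise bound \eqref{e:76} carries the extra factor $(\rho(y,y')/r_j)^{\eta}$, which passes through the integral unchanged. The same use of Lemma \ref{l:2} then yields $A(\rho(y,y')/r_j)^\eta$. Since the weight is already centred at $y$, no comparison between $y$ and $y'$ is needed.

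The kernel representation clause of Definition \ref{d:3} is not produced by the pointwise bounds. We take it as part of the hypothesis that the $K_j$ are the dyadic kernels of the operator, as in Corollary \ref{c:2}.

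For the final sentence, assume $M>N_\kappa$. Choose $s$ with $N_\kappa/2<s<M-N_\kappa/2$; this interval is nonempty exactly because $M>N_\kappa$. The first part then applies and gives $\CH^2_{s,\eta}$ with $s>N_\kappa/2$.

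There is no real obstacle; the only point needing care is bookkeeping. Each estimate must be paired with the correct one-sided pointwise bound, so that the denominator volume is centred at the same point as the Lemma \ref{l:2} integral. A mismatch would require the balanced-volume argument based on Lemma \ref{l:11}, which is used only for the balanced variant.
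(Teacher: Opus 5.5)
Your proposal is correct and follows the paper's proof essentially step for step. You insert the one-sided pointwise bounds \eqref{e:75}--\eqref{e:78}, cancel the volume factor, apply Lemma \ref{l:2} with exponent $M-s>N_\kappa/2$, and then choose $s\in(N_\kappa/2,M-N_\kappa/2)$; your remark that the kernel-representation clause of Definition \ref{d:3} must be supplied separately is a fair point that the paper leaves implicit.
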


\begin{proof}
We first prove the estimate in the second variable. By \eqref{e:75},
\begin{align*}
        V(y,r_j) \int_\calC \left(1+\frac{\rho(x,y)}{r_j}\right)^{2s} |K_j(x,y)|^2\,d\omega(x)
        &\le  \frac{B^2}{V(y,r_j)}  \int_\calC \left(1+\frac{\rho(x,y)}{r_j}\right)^{-2(M-s)} d\omega(x) \le C B^2,
\end{align*}
because $M-s>N_\kappa/2$ and Lemma \ref{l:2} applies. 
This is \eqref{e:26}. If $\rho(y,y')\le r_j$, then \eqref{e:76} gives
\begin{align*}
        &V(y,r_j)  \int_\calC   \left(1+\frac{\rho(x,y)}{r_j}\right)^{2s}  |K_j(x,y)-K_j(x,y')|^2\,d\omega(x)  \\
        &\le B^2 \left(\frac{\rho(y,y')}{r_j}\right)^{2\eta} \frac{1}{V(y,r_j)} \int_\calC  \left(1+\frac{\rho(x,y)}{r_j}\right)^{-2(M-s)} d\omega(x)                                      \\
        &\le C B^2   \left(\frac{\rho(y,y')}{r_j}\right)^{2\eta}.
\end{align*}
Taking square roots gives \eqref{e:28}.

The first-variable estimates are identical after interchanging $x$ and $y$. Indeed, \eqref{e:77} gives
\begin{align*}
        &V(x,r_j) \int_\calC \left(1+\frac{\rho(x,y)}{r_j}\right)^{2s} |K_j(x,y)|^2\,d\omega(y)  \le \frac{B^2}{V(x,r_j)}
        \int_\calC \left(1+\frac{\rho(x,y)}{r_j}\right)^{-2(M-s)} d\omega(y)  \le C B^2,
\end{align*}
which is \eqref{e:27}. 
If $\rho(x,x')\le r_j$, then \eqref{e:78} gives
$$
        V(x,r_j) \int_\calC  \left(1+\frac{\rho(x,y)}{r_j}\right)^{2s}   |K_j(x,y)-K_j(x',y)|^2\,d\omega(y)
        \le C B^2 \left(\frac{\rho(x,x')}{r_j}\right)^{2\eta},
$$
again by Lemma \ref{l:2}. This proves \eqref{e:29}.

If the pointwise difference condition is formulated with the nearby point $y'$ or $x'$ as the center of the decay, 
the same estimates follow from local doubling and the comparisons $V(y,r_j)\simeq V(y',r_j)$ and $1+\rho(x,y)/r_j\simeq1+\rho(x,y')/r_j$ 
when $\rho(y,y')\le r_j$, and similarly in the first variable.

Finally, if $M>N_\kappa$, then the interval $(N_\kappa/2,M-N_\kappa/2)$ is non-empty. Choose $s$ in this interval. 
Then $s>N_\kappa/2$ and $M-s>N_\kappa/2$, so the preceding estimates apply.
\end{proof}

\begin{definition}
\label{d:6} 
Let $M>0$ and $0<\eta\le1$. A dyadic kernel family $K_j$ satisfies the balanced pointwise chamber condition of order $M$ if there is a constant $B$ such that
\begin{equation}\label{e:79}
        |K_j(x,y)| \le  \frac{B}{V(x,r_j)^{1/2}V(y,r_j)^{1/2}} \left(1+\frac{\rho(x,y)}{r_j}\right)^{-M}
\end{equation}
for all $x,y\in\calC$, and, whenever $\rho(y,y')\le r_j$,
\begin{equation}\label{e:80}
        |K_j(x,y)-K_j(x,y')|  \le  \frac{B}{V(x,r_j)^{1/2}V(y,r_j)^{1/2}}  \left(\frac{\rho(y,y')}{r_j}\right)^\eta   \left(1+\frac{\rho(x,y)}{r_j}\right)^{-M}.
\end{equation}
Finally, whenever $\rho(x,x')\le r_j$,
\begin{equation}\label{e:81}
        |K_j(x,y)-K_j(x',y)|  \le \frac{B}{V(x,r_j)^{1/2}V(y,r_j)^{1/2}} \left(\frac{\rho(x,x')}{r_j}\right)^\eta  \left(1+\frac{\rho(x,y)}{r_j}\right)^{-M}.
\end{equation}
\end{definition}

\begin{proposition}
\label{p:6} 
Assume that $K_j$ satisfies Definition \ref{d:6} with order $M$ and difference exponent $\eta\in(0,1]$. 
If $M>s+N/2$, then $K_j$ satisfies the $\CH^2_{s,\eta}$ estimates. 
Hence the balanced pointwise condition implies $\CH^2_{s,\eta}$ for some $s>N_\kappa/2$ whenever $M>(N+N_\kappa)/2$.
\end{proposition}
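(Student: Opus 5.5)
The plan mirrors the proof of Proposition \ref{p:5}, with Lemma \ref{l:11} in place of Lemma \ref{l:2} to absorb the balanced denominator. For the size estimate \eqref{e:26}, square \eqref{e:79}, multiply by $V(y,r_j)$ and by the weight $(1+\rho(x,y)/r_j)^{2s}$, and integrate in $d\omega(x)$. The factor $V(y,r_j)$ cancels exactly against the $V(y,r_j)^{-1}$ coming from the square of the denominator, leaving
$$
V(y,r_j)\int_\calC\Bigl(1+\frac{\rho(x,y)}{r_j}\Bigr)^{2s}|K_j(x,y)|^2\,d\omega(x)
\le B^2\int_\calC\Bigl(1+\frac{\rho(x,y)}{r_j}\Bigr)^{-2(M-s)}\frac{d\omega(x)}{V(x,r_j)} .
$$
Lemma \ref{l:11} with $q=M-s>N/2$ bounds the right side by $CB^2$ uniformly in $y$ and $j$. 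Notice that only the Euclidean threshold $N/2$ enters here, not $N_\kappa/2$. This is because $d\omega(x)/V(x,r)\lesssim r^{-N}\,dx$, so the weight is fully absorbed.

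The first-difference estimate \eqref{e:28} follows in the same way from \eqref{e:80}. The extra factor $(\rho(y,y')/r_j)^{2\eta}$ simply rides along, and taking square roots gives the bound with $A\simeq B$. For the first-variable estimates \eqref{e:27} and \eqref{e:29}, the balanced bound \eqref{e:79} is symmetric in $x,y$. Multiplying by $V(x,r_j)$ and integrating in $d\omega(y)$, the same computation applies with the symmetric form of Lemma \ref{l:11}, which controls $\int (1+\rho(x,y)/r)^{-2q}\,d\omega(y)/V(y,r)$. The $x$-difference is handled by \eqref{e:81} in the same manner.

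The only point needing care is the centering in the difference estimates. In \eqref{e:81} the decay factor is centered at $(x,y)$ while we integrate in $y$, so no recentering is needed. If one prefers the decay centered at a moved point, it can be recentered using local doubling: $V(y,r_j)\simeq V(y',r_j)$ and $1+\rho(x,y)/r_j\simeq 1+\rho(x,y')/r_j$ when $\rho(y,y')\le r_j$, exactly as remarked after Proposition \ref{p:5}.

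For the final assertion, suppose $M>(N+N_\kappa)/2$. Then the interval $(N_\kappa/2,\,M-N/2)$ is non-empty, and we choose $s$ in it. This $s$ satisfies $s>N_\kappa/2$ and $M-s>N/2$, so the preceding estimates hold. I expect no real obstacle beyond confirming that Lemma \ref{l:11} is applied with the correct variable in the denominator volume. That lemma needs $V(x,r)$ when integrating in $x$, and this is precisely what the balanced denominator supplies after the cancellation of the other half-volume.
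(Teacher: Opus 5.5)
Your proposal is correct and follows the paper's proof essentially line by line: you cancel one half-volume factor against the squared balanced bound and apply Lemma \ref{l:11} with $q=M-s>N/2$ (in the appropriate variable) to each of the four estimates. The final assertion then follows by choosing $s\in(N_\kappa/2,\,M-N/2)$, exactly as the paper does.
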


\begin{proof}
We prove first the size estimate in the second variable. By \eqref{e:79},
$$
       V(y,r_j) \int_\calC \left(1+\frac{\rho(x,y)}{r_j}\right)^{2s}|K_j(x,y)|^2\,d\omega(x)
        \le B^2   \int_\calC  \left(1+\frac{\rho(x,y)}{r_j}\right)^{-2(M-s)} \frac{d\omega(x)}{V(x,r_j)}.
$$
Since $M-s>N/2$, Lemma \ref{l:11} gives a uniform bound for the last integral, and  \eqref{e:26} follows. 
If $\rho(y,y')\le r_j$, then \eqref{e:80} gives
\begin{align*}
        &V(y,r_j)  \int_\calC  \left(1+\frac{\rho(x,y)}{r_j}\right)^{2s}  |K_j(x,y)-K_j(x,y')|^2\,d\omega(x) \\
        &\le  B^2\left(\frac{\rho(y,y')}{r_j}\right)^{2\eta}  \int_\calC  \left(1+\frac{\rho(x,y)}{r_j}\right)^{-2(M-s)}  \frac{d\omega(x)}{V(x,r_j)}\\
        &\le  C B^2\left(\frac{\rho(y,y')}{r_j}\right)^{2\eta}.
\end{align*}
Taking square roots gives \eqref{e:28}. The size estimate in the first variable is identical:
$$
     V(x,r_j) \int_\calC   \left(1+\frac{\rho(x,y)}{r_j}\right)^{2s}|K_j(x,y)|^2\,d\omega(y)
        \le  B^2  \int_\calC \left(1+\frac{\rho(x,y)}{r_j}\right)^{-2(M-s)} \frac{d\omega(y)}{V(y,r_j)}
        \le C B^2.
$$
Finally, if $\rho(x,x')\le r_j$, \eqref{e:81} yields
\begin{align*}
        &V(x,r_j) \int_\calC \left(1+\frac{\rho(x,y)}{r_j}\right)^{2s}  |K_j(x,y)-K_j(x',y)|^2\,d\omega(y) \\
        &\le   B^2\left(\frac{\rho(x,x')}{r_j}\right)^{2\eta}  \int_\calC  \left(1+\frac{\rho(x,y)}{r_j}\right)^{-2(M-s)}  \frac{d\omega(y)}{V(y,r_j)}\\
        &\le  C B^2\left(\frac{\rho(x,x')}{r_j}\right)^{2\eta}.
\end{align*}
Taking square roots in the last estimate gives \eqref{e:29}. Thus all four estimates in Definition \ref{d:3} hold. 
If $M>(N+N_\kappa)/2$, then the interval $(N_\kappa/2,M-N/2)$ is non-empty; choose $s$ in this interval.
\end{proof}

\begin{proof}[Proof of Corollary \ref{c:2}]
By Proposition \ref{p:5}, the pointwise condition implies $\CH^2_{s,\eta}$ for some $s>N_\kappa/2$.
Theorem \ref{t:2} gives strong $L^p$ boundedness. 
If the truncations satisfy the admissible-truncation condition, Proposition \ref{p:3} and Theorem \ref{t:6} give weak type $(1,1)$.
\end{proof}

\subsection{The finite-reflection-group chamber theorem}
\label{s:11}

We now record the chamber theorem in the form used for non-product examples. 
The underlying value lift is the chamber lifting of \cite{HLLSW-Calderon}. 
Let $G$ be a finite reflection group on $\bbR^N$ with root system $R$, positive subsystem $R_+$, multiplicity $\kappa$, and open chamber $\calC$. 
We choose $R_+$ so that $\ip{\alpha}{x}>0$ for $x\in\calC$ and $\alpha\in R_+$. On $\calC$,
$$
      d\omega(x)= \prod_{\alpha\in R_+}\ip{\alpha}{x}^{2\kappa(\alpha)}\,dx.
$$
Put
$$
        V_{\calC}(y,r)=\omega(B(y,r)\cap\calC).
$$
\begin{lemma}\label{l:12}
For every $y\in\calC$ and $r>0$,
\begin{equation}\label{e:82}
        V_{\calC}(y,r)\simeq  r^N\prod_{\alpha\in R_+}   \bigl(\ip{\alpha}{y}+r\bigr)^{2\kappa(\alpha)}.
\end{equation}
Consequently $(\calC,|\cdot|,d\omega)$ is a space of homogeneous type, and
\begin{equation}\label{e:83}
        V_{\calC}(y,R) \le C\left(\frac{R}{r}\right)^{N_\kappa}V_{\calC}(y,r),   \qquad 0<r\le R.
\end{equation}
\end{lemma}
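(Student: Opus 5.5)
The plan is to prove the two-sided volume estimate \eqref{e:82} by comparing $B(y,r)\cap\calC$ with a ball of comparable radius sitting inside $\calC$ on which each linear form $\ip{\alpha}{x}$ is essentially constant, or at least of size $\ip{\alpha}{y}+r$. Doubling \eqref{e:83} will then follow from \eqref{e:82} exactly as in Lemma \ref{l:1}, since each factor $(\ip{\alpha}{y}+R)/(\ip{\alpha}{y}+r)\le R/r$.

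\emph{Upper bound.} For $x\in B(y,r)\cap\calC$ and $\alpha\in R_+$ we have $0<\ip{\alpha}{x}\le \ip{\alpha}{y}+|\alpha|r$. Normalizing roots to a fixed length, this gives $\ip{\alpha}{x}\lesssim \ip{\alpha}{y}+r$. Hence $V_\calC(y,r)\lesssim |B(y,r)|\prod_{\alpha}(\ip{\alpha}{y}+r)^{2\kappa(\alpha)}$, which is the right side of \eqref{e:82}.

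\emph{Lower bound.} This is the main point, and it rests on the chamber being a simplicial cone. Fix a unit vector $e$ in the interior of $\calC$, for instance the normalized sum of the fundamental coweights. Then $c_1:=\min_{\alpha\in R_+}\ip{\alpha}{e}>0$. Set $z=y+\tfrac r2 e$. For every $\alpha\in R_+$ this gives $\ip{\alpha}{z}\ge \ip{\alpha}{y}+c_1r/2$.

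Consider the ball $B(z,c_2r)$ with $c_2=\min(1/2,c_1/4)/\max_\alpha|\alpha|$. It lies in $B(y,r)$, and on it every point $x$ satisfies
$$\ip{\alpha}{x}\ge \ip{\alpha}{y}+c_1r/4\gtrsim \ip{\alpha}{y}+r.$$
In particular the ball lies in $\calC$, since all positive roots are positive on it. Integrating the weight over this ball gives
$$V_\calC(y,r)\gtrsim r^N\prod_\alpha(\ip{\alpha}{y}+r)^{2\kappa(\alpha)}.$$
The only step needing care is the existence of $e$ with $\ip{\alpha}{e}>0$ for all positive roots, and this is just the definition of the open chamber $\calC$. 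This step replaces the coordinatewise box argument of Lemma \ref{l:1}.

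Finally, \eqref{e:83} shows that $(\calC,|\cdot|,d\omega)$ is doubling, so it is a space of homogeneous type. Taking the ratio of \eqref{e:82} at radii $R$ and $r$ gives the bound $(R/r)^{N+2\sum_{\alpha\in R_+}\kappa(\alpha)}$. The exponent is $N_\kappa$ because $\gamma_\kappa=\sum_{\alpha\in R_+}\kappa(\alpha)$.
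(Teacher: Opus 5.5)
Your proof is correct and follows the paper's argument. The upper bound comes from $\ip{\alpha}{x}\le\ip{\alpha}{y}+|\alpha|r$. The lower bound comes from pushing $y$ inward along a fixed direction $e\in\calC$ with $\min_{\alpha\in R_+}\ip{\alpha}{e}>0$, then integrating over a ball of radius comparable to $r$ on which every $\ip{\alpha}{x}\gtrsim\ip{\alpha}{y}+r$. The differences are cosmetic: you work at scale $r$ directly and obtain the single bound $\ip{\alpha}{x}\ge\ip{\alpha}{y}+c_1r/4$, whereas the paper rescales to $r=1$ and splits into the cases $\ip{\alpha}{y}\ge1$ and $\ip{\alpha}{y}<1$.
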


\begin{proof}
By homogeneity, it is enough to prove \eqref{e:82} for $r=1$.

The upper bound is immediate: if $x\in B(y,1)\cap\calC$, then $0<\ip{\alpha}{x}\le \ip{\alpha}{y}+|\alpha|$ for every
$\alpha\in R_+$, and hence
$$
     V_{\calC}(y,1)\le C\prod_{\alpha\in R_+}(\ip{\alpha}{y}+1)^{2\kappa(\alpha)}.
$$

For the lower bound we use a uniform inward direction. 
Since the chamber is a fixed polyhedral cone, there is $e\in\calC$ with $|e|=1$ and
$$
  \ip{\alpha}{e}\ge c_G>0,  \qquad \alpha\in R_+.
$$
Fix $0<\varepsilon<c_G/10$ and set
$$
        Q_y=y+\varepsilon e+\varepsilon B(0,1).
$$
For $\varepsilon$ small, $Q_y\subset B(y,1)\cap\calC$. Moreover, for $x=y+\varepsilon e+\varepsilon z$, $|z|<1$,
$$
     \ip{\alpha}{x}  =\ip{\alpha}{y}+\varepsilon\ip{\alpha}{e}+\varepsilon\ip{\alpha}{z}.
$$
Choose a smaller fixed ball $B(0,c)$ if necessary. Then for all $z\in B(0,c)$,
\begin{equation}\label{e:84}
        \ip{\alpha}{x} \ge  c\bigl(\ip{\alpha}{y}+1\bigr) \quad\text{whenever }\ip{\alpha}{y}\ge 1,
\end{equation}
and
$$
     \ip{\alpha}{x}\ge c \quad\text{whenever }\ip{\alpha}{y}<1.
$$
The last inequality is not meant to be sharp near the wall; it is only needed for factors for which $(\ip{\alpha}{y}+1)^{2\kappa(\alpha)}\simeq1$. 
Integrating over the fixed set $y+\varepsilon e+\varepsilon B(0,c)$ gives
$$
     V_{\calC}(y,1) \ge  c\prod_{\alpha\in R_+}(\ip{\alpha}{y}+1)^{2\kappa(\alpha)}.
$$
Together with the upper bound this proves \eqref{e:82} at scale one; scaling gives the general case.
Finally, \eqref{e:83} follows by applying \eqref{e:82} at radii $r$ and $R$ and using
$$
    \frac{\ip{\alpha}{y}+R}{\ip{\alpha}{y}+r}  \le \frac{R}{r}, \qquad 0<r\le R.
$$
The proof is complete.
\end{proof}

\begin{lemma}\label{l:13}
If $s>N_\kappa/2$, then for every $y\in\calC$, $r>0$, and $a\ge1$,
\begin{equation}\label{e:85}
        \int_{\{x\in\calC: |x-y|>ar\}}  \left(1+\frac{|x-y|}{r}\right)^{-2s}d\omega(x)
        \le   C V_{\calC}(y,r)a^{N_\kappa-2s}.
\end{equation}
In particular,
\begin{equation}\label{e:86}
        \int_{\calC} \left(1+\frac{|x-y|}{r}\right)^{-2s}d\omega(x)
        \le   C V_{\calC}(y,r).
\end{equation}
\end{lemma}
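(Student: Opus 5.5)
The plan is to repeat the dyadic annulus argument of Lemma \ref{l:2}, replacing the product volume estimate by the chamber doubling inequality \eqref{e:83} of Lemma \ref{l:12}. Fix $y\in\calC$, $r>0$ and $a\ge1$. Split the region $\{x\in\calC:|x-y|>ar\}$ into the annuli
$$
A_k=\{x\in\calC:2^kar<|x-y|\le2^{k+1}ar\},\qquad k=0,1,2,\ldots.
$$
These annuli cover the region and live entirely inside the chamber. As a result, every volume that appears is a chamber volume $V_{\calC}(y,\cdot)$.

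On $A_k$ we have $1+|x-y|/r\ge2^ka$, so the integrand is at most $(2^ka)^{-2s}$. Next, $A_k\subset B(y,2^{k+1}ar)\cap\calC$. By \eqref{e:83} with $R=2^{k+1}ar\ge r$, this gives
$$
\omega(A_k)\le V_{\calC}(y,2^{k+1}ar)\le C(2^{k+1}a)^{N_\kappa}V_{\calC}(y,r).
$$
Multiplying the two bounds, the contribution of $A_k$ is at most $C\,2^{N_\kappa}(2^ka)^{N_\kappa-2s}V_{\calC}(y,r)$. Since $2s>N_\kappa$, the series $\sum_{k\ge0}2^{k(N_\kappa-2s)}$ converges. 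Summing over $k$ yields \eqref{e:85}, with a constant depending only on $G$, $\kappa$ and $s$.

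For \eqref{e:86}, we apply \eqref{e:85} with $a=1$ and add the remaining piece $B(y,r)\cap\calC$. On that piece the integrand is at most $1$, so its contribution is at most $V_{\calC}(y,r)$.

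There is no genuine obstacle. The only point that needs care is that \eqref{e:83} is stated for the truncated balls $B(y,R)\cap\calC$ centred at points $y$ of the open chamber, and this is exactly the form used above. No comparison with full-space balls is needed, so wall proximity of $y$ plays no role.
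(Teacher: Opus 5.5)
Your proposal is correct and is essentially the paper's own proof: the same dyadic annuli $A_k$, the bound $(2^ka)^{-2s}$ on the integrand, the chamber doubling inequality \eqref{e:83} to control $\omega(A_k)$, summation of the geometric series using $2s>N_\kappa$, and the inner ball added to obtain \eqref{e:86}. The only detail to note is that $A_k$ and the inner piece use closed balls while $V_{\calC}$ is defined with open ones; this is harmless because spheres have $\omega$-measure zero, or because one can enlarge the radius and apply doubling once more.
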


\begin{proof}
Let
$$
     A_k=\bigl\{x\in\calC:2^kar<|x-y|\le2^{k+1}ar\bigr\}, \qquad k\ge0.
$$
On $A_k$ the weight factor is at   most $C(2^ka)^{ -2s}$,  while \eqref{e:83}  gives
$$
     \omega(A_k) \le V_{\calC}(y,2^{k+1}ar) \le C(2^ka)^{N_\kappa}V_{\calC}(y,r).
$$
Hence
$$
    \int_{A_k} \left(1+\frac{|x-y|}{r}\right)^{-2s}d\omega(x)   \le  C(2^ka)^{N_\kappa-2s}V_{\calC}(y,r).
$$
Summing in $k$ proves \eqref{e:85}. Taking $a=1$ and adding the inner ball gives  \eqref{e:86}.
\end{proof}

The notation $\CH^2_{s,\eta}(\calC)$ means that the dyadic lifted kernels
$$
K_j^{\sigma\tau}(x,y),
        \qquad \sigma,\tau\in G,
$$
satisfy Definition \ref{d:3} with the volume $V$ replaced by $V_{\calC}$, the distance $\rho$ replaced by $|\cdot|$,
and the matrix indices $\alpha,\beta$ replaced by $\sigma,\tau$. In particular, the kernel representation clause in Definition \ref{d:3} is included in the notation.

\begin{proof}[Proof of Theorem \ref{t:1}]
The chamber lift is an exact $L^p$ decomposition:
$$
    \left\|Uf\right\|_{L^p(\calC,d\omega;\ell^p(G))}^p =\sum_{\sigma\in G}\int_{\calC}|f(\sigma x)|^p\,d\omega(x) =\|f\|_{L^p(\bbR^N,d\omega)}^p.
$$
The Dunkl Plancherel theorem gives $\|T_m\|_{2\to2}\le\|m\|_\infty$, hence
$$
      \left\|\mathbb T_m^G\right\|_{L^2(\calC;\ell^2(G))\to L^2(\calC;\ell^2(G))} \le \|m\|_\infty.
$$
By the kernel representation part of the $\CH^2_{s,\eta}(\calC)$ condition, the off-diagonal entry kernel is
$$
     K^{\sigma\tau}(x,y)=\sum_{j\in\bbZ}K_j^{\sigma\tau}(x,y),  \qquad x\ne y.
$$
It remains to verify the standard integral H\"ormander estimates for $K^{\sigma\tau}$. 
Fix $y,y'\in\calC$ and put $\delta=|y-y'|$. Choose $j_0$ so that $2^{-j_0-1}<\delta\le2^{-j_0}$. 
For $j\le j_0$, the difference part of $\CH^2_{s,\eta}(\calC)$ and Cauchy--Schwarz with \eqref{e:86} give
$$
     \int_{\calC}|K_j^{\sigma\tau}(x,y)-K_j^{\sigma\tau}(x,y')|\,d\omega(x) \le  C A\left(\frac{\delta}{r_j}\right)^\eta.
$$
Thus
$$
     \sum_{j\le j_0}  \int_{|x-y|>2\delta}\left|K_j^{\sigma\tau}(x,y)-K_j^{\sigma\tau}(x,y')\right|\,d\omega(x)  \le C A.
$$
For $j>j_0$, use the triangle inequality and \eqref{e:85}:
$$
   \int_{|x-y|>2\delta}|K_j^{\sigma\tau}(x,y)-K_j^{\sigma\tau}(x,y')|\,d\omega(x)  \le  C A\left(\frac{r_j}{\delta}\right)^{s-N_\kappa/2}.
$$
Since $s>N_\kappa/2$, the sum over $j>j_0$ is finite. The corresponding $x$-difference estimate is identical. 
Hence every scalar entry $K^{\sigma\tau}$ is a standard Calder\'on--Zygmund kernel on $(\calC,|\cdot|,d\omega)$, with bounds controlled by
$A=\|\mathbb T_m^G\|_{\CH^2_{s,\eta}(\calC)}$.

Applying the scalar Calder\'on--Zygmund theorem entry by entry and summing over the finite set $G\times G$, we obtain
$$
\|\mathbb T_m^G F\|_{L^p(\calC;\ell^p(G))}
        \le
C_{p,G,\kappa,s,\eta}
        \bigl(\|m\|_\infty+\|\mathbb T_m^G\|_{\CH^2_{s,\eta}(\calC)}\bigr)
        \|F\|_{L^p(\calC;\ell^p(G))}.
$$
Taking $F=Uf$ and applying $U^{-1}$ proves the full-space estimate.
\end{proof}

\subsection{The rank-two dihedral chamber model}
\label{s:12}

The preceding theorem applies to every finite reflection group through the abstract matrix kernel condition. 
We now spell it out for $I_2(q)$, since these groups give the clearest examples beyond the product case. 
The result is not a scalar Walsh--Sobolev theorem. 
It says that if the lifted $2q\times2q$ chamber matrix kernels satisfy the same $L^2$-H\"ormander estimates, then the full $L^p$ conclusion follows.

This formulation separates the general Calder\'on--Zygmund argument from the product-specific verification. 
The chamber Calder\'on--Zygmund argument is general. For non-product groups, the scalar criterion leading to the matrix kernel condition is not part of the present paper.

We first fix the dihedral chamber and notation.

Let $G_q=I_2(q)$ be the dihedral reflection group of order $2q$. It acts orthogonally on $\bbR^2$. 
The reflecting lines divide the plane into $2q$ open sectors. We fix one of them and denote it by $\calC$. After a rotation we may write
$$
     \calC = \{r(\cos\theta,\sin\theta): r>0,\ 0<\theta<\pi/q\}.
$$
The closure $\overline{\calC}$ is a closed wedge of angle $\pi/q$. The sets
$$
\sigma\calC,  \qquad \sigma\in G_q,
$$
are pairwise disjoint up to their boundary rays, and their union is $\bbR^2$ up to the reflecting lines. 
These boundary lines have $\omega$-measure zero, so they do not affect $L^p$-norms.

Let $R$ be the associated root system, and choose a positive subsystem $R_+$. Let $\kappa:R\to[0,\infty)$ be $G_q$-invariant. The Dunkl weight is
$$
         h_\kappa(x) = \prod_{\alpha\in R_+}|\ip{x}{\alpha}|^{\kappa(\alpha)},  \qquad    d\omega(x)=h_\kappa(x)^2\,dx.
$$
The homogeneity degree of $h_\kappa$ is
$$
        \gamma_\kappa=\sum_{\alpha\in R_+}\kappa(\alpha),
$$
and the homogeneous dimension of the measure is
$$
        N_\kappa=2+2\gamma_\kappa.
$$
Indeed,
$$
       \omega(rE)=r^{N_\kappa}\omega(E),  \qquad r>0,
$$
for every measurable set $E\subset\bbR^2$.

On $\calC$ we use the Euclidean distance
$$
        \rho(x,y)=|x-y|.
$$
For $x\in\calC$ and $r>0$, define
$$
      B_{\calC}(x,r)=\{y\in\calC: |x-y|<r\},  \qquad  V_{\calC}(x,r)=\omega(B_{\calC}(x,r)).
$$
The only geometric estimates needed in the dihedral part are the following volume and tail estimates.

We need only the standard doubling and tail estimates. We record them in the rank-two chamber form.

\begin{lemma} \label{l:14} 
There are constants $C_1,C_2>0$, depending only on $q$ and $\kappa$, such that for every $x\in\calC$ and every $r>0$,
\begin{equation}\label{e:87}
        C_1 r^2 \prod_{\alpha\in R_+} \bigl(|\ip{x}{\alpha}|+r\bigr)^{2\kappa(\alpha)}
        \le  V_{\calC}(x,r) \le C_2 r^2  \prod_{\alpha\in R_+}  \bigl(|\ip{x}{\alpha}|+r\bigr)^{2\kappa(\alpha)}.
\end{equation}
Consequently,
\begin{equation}\label{e:88}
        V_{\calC}(x,R)  \le  C\left(\frac{R}{r}\right)^{N_\kappa}V_{\calC}(x,r), \qquad 0<r\le R.
\end{equation}
In particular, $(\calC,\rho,d\omega)$ is a space of homogeneous type.
\end{lemma}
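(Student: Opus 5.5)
Lemma \ref{l:14} is the rank-two case of Lemma \ref{l:12}, so the most direct route is to observe that $I_2(q)$ is a finite reflection group on $\bbR^2$ with chamber $\calC$ a wedge of angle $\pi/q$. Lemma \ref{l:12} then gives \eqref{e:87} and \eqref{e:88} with $N=2$. Since $\ip{x}{\alpha}>0$ on $\calC$ for $\alpha\in R_+$ (after choosing $R_+$ adapted to $\calC$), the absolute values in \eqref{e:87} are harmless. I would nevertheless write out the argument in the wedge, since it is short and makes the constants' dependence on $q,\kappa$ visible.

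\emph{Scaling.} First reduce to $r=1$. The map $x\mapsto rx$ preserves $\calC$ and gives $V_{\calC}(rx,r)=r^{N_\kappa}V_{\calC}(x,1)$. The right-hand side of \eqref{e:87} scales the same way, because
$$
r^2\prod_{\alpha\in R_+}(\ip{x}{\alpha}+r)^{2\kappa(\alpha)}=r^{2+2\gamma_\kappa}\prod_{\alpha\in R_+}(\ip{x/r}{\alpha}+1)^{2\kappa(\alpha)}.
$$

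\emph{Upper bound at $r=1$.} Every $z\in B_{\calC}(x,1)$ satisfies $0<\ip{z}{\alpha}\le\ip{x}{\alpha}+|\alpha|$. The ball has Lebesgue measure $\pi$, so integrating the weight gives the bound $C_2\prod_{\alpha\in R_+}(\ip{x}{\alpha}+1)^{2\kappa(\alpha)}$.

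\emph{Lower bound at $r=1$.} Let $e=(\cos\frac{\pi}{2q},\sin\frac{\pi}{2q})$ be the bisector direction, so that $\ip{e}{\alpha}\ge c_q>0$ for all $\alpha\in R_+$. For a small $\varepsilon$ depending on $q$, consider the disc $Q=x+\varepsilon e+\varepsilon c\,B(0,1)$. It lies in $B_{\calC}(x,1)$ because $\calC$ is a convex cone and $x+\varepsilon e$ is at distance at least $\varepsilon c_q$ from $\partial\calC$. On $Q$, each factor satisfies $\ip{z}{\alpha}\gtrsim\ip{x}{\alpha}+1$. When $\ip{x}{\alpha}\ge1$, this follows since the perturbation is $O(\varepsilon)$. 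When $\ip{x}{\alpha}<1$, the inward shift gives $\ip{z}{\alpha}\ge\varepsilon c_q/2$, which is comparable to $1$. Since $Q$ has Lebesgue measure $\simeq1$, the lower bound follows.

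This lower bound is the only delicate point: near a corner of the wedge, the ball $B_{\calC}(x,1)$ could be cut off by both walls. The fixed inward direction $e$ handles all walls simultaneously, precisely because $e$ lies in the open cone.

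\emph{Doubling.} Finally, \eqref{e:88} follows by comparing \eqref{e:87} at radii $R$ and $r$ and using
$$
\frac{\ip{x}{\alpha}+R}{\ip{x}{\alpha}+r}\le\frac{R}{r},\qquad 0<r\le R.
$$
This gives the ratio bound $(R/r)^{2+2\gamma_\kappa}=(R/r)^{N_\kappa}$. Doubling, together with the Euclidean metric, makes $(\calC,\rho,d\omega)$ a space of homogeneous type.
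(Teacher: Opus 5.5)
Your proof is correct. Your first observation already settles the lemma: Lemma \ref{l:12}, proved earlier in the paper for an arbitrary finite reflection group, specializes verbatim to $I_2(q)$. The absolute values and the choice of $R_+$ are immaterial because $\kappa(-\alpha)=\kappa(\alpha)$.

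Your written-out argument is essentially the paper's proof of Lemma \ref{l:12}. You rescale to $r=1$ and get the upper bound from the pointwise estimate $\ip{z}{\alpha}\le\ip{x}{\alpha}+|\alpha|$. For the lower bound you use a disc of fixed size pushed inward along a direction $e$ in the open cone. One small normalization: $x+\varepsilon e$ lies at distance at least $\varepsilon c_q/\max_\alpha|\alpha|$ from $\partial\calC$, not $\varepsilon c_q$, unless the roots are unit vectors. This only changes the choice of $c$.

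The paper proves Lemma \ref{l:14} itself by a different route. It first asserts the full-space estimate \eqref{e:89} for $\omega(B(x,r))$, treating one linear factor at a time. It then compares $V_{\calC}(x,r)$ with $\omega(B(x,r))$ by a compactness argument on the normalized sets $r^{-1}(B(x,r)\cap\calC-x)$.

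That route gives extra information, namely $V_{\calC}(x,r)\simeq\omega(B(x,r))$. However, both of its steps are only sketched. In particular, the lower bound in \eqref{e:89} needs a common subregion of the ball on which all linear factors are simultaneously large, which is exactly what your inward disc supplies.

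Your argument skips the detour through full balls and handles the corner of the wedge, where both walls cut the ball, uniformly. It also gives constants that visibly depend only on $q$ and $\kappa$.
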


\begin{proof}
We give the details because this is the only geometric estimate used later. The full-space estimate
\begin{equation}\label{e:89}
        \omega(B(x,r))  \simeq   r^2  \prod_{\alpha\in R_+}  \bigl(|\ip{x}{\alpha}|+r\bigr)^{2\kappa(\alpha)}
\end{equation}
is standard for finite products of powers of linear functions. In rank two it can be checked directly.
 For one linear factor $\ell_\alpha(y)=\ip{y}{\alpha}$, the average size of $|\ell_\alpha|^{2\kappa(\alpha)}$ on $B(x,r)$ is comparable to $(|\ell_\alpha(x)|+r)^{2\kappa(\alpha)}$. 
 If $|\ell_\alpha(x)|\ge 2r|\alpha|$, then $|\ell_\alpha(y)|\simeq |\ell_\alpha(x)|$ on $B(x,r)$. 
 If $|\ell_\alpha(x)|<2r|\alpha|$, then, after a rotation, 
 the integral of $|\ell_\alpha(y)|^{2\kappa(\alpha)}$ over a ball of radius $r$ has size $r^{2+2\kappa(\alpha)}$. 
 Applying this finite number of times, or equivalently decomposing the ball into finitely many regions according to which linear factors are small, 
 gives \eqref{e:89}. The constants depend only on the finite root system and on $\kappa$.

We now pass from the full ball to the chamber ball. Inside one fixed chamber, 
the angular part of the weight is a finite product of powers of sines of the angles to the reflecting lines. 
Since the chamber has a fixed positive angle, the intersection $B(x,r)\cap\calC$ always occupies a fixed non-zero portion of the ball at the relevant scale. 
More precisely, after translating and rescaling by $r$, 
the possible sets $r^{-1}(B(x,r)\cap\calC-x)$ form a compact family of intersections of the unit ball with wedges of angle $\pi/q$ or with half-planes. 
On each such normalized set the normalized weight is comparable, with constants independent of $x$ and $r$, 
to the same finite product appearing in \eqref{e:89}. 
Hence $\omega(B_{\calC}(x,r))\simeq \omega(B(x,r)),$ with constants depending only on $q$ and $\kappa$. 
Combining this with \eqref{e:89} proves \eqref{e:87}.

Finally, \eqref{e:88} follows from \eqref{e:87}. Indeed, if $0<r\le R$, then
\begin{align*}
        \frac{V_{\calC}(x,R)}{V_{\calC}(x,r)} &\lesssim \left(\frac{R}{r}\right)^2  \prod_{\alpha\in R_+}
        \left(  \frac{|\ip{x}{\alpha}|+R}{|\ip{x}{\alpha}|+r}  \right)^{2\kappa(\alpha)}
        \le  \left(\frac{R}{r}\right)^{2+2\sum_{\alpha\in R_+}\kappa(\alpha)}= \left(\frac{R}{r}\right)^{N_\kappa}.
\end{align*}
We complete the lemma.
\end{proof}

\begin{lemma}
\label{l:15} 
Let $s>N_\kappa/2$. Then, for every $x\in\calC$, every $r>0$, and every $a\ge1$,
\begin{equation}\label{e:90}
        \int_{\{y\in\calC:\rho(x,y)>ar\}}  \left(1+\frac{\rho(x,y)}{r}\right)^{-2s}   d\omega(y)
        \le  C V_{\calC}(x,r)a^{N_\kappa-2s}.
\end{equation}
In particular,
\begin{equation}\label{e:91}
        \int_{\calC} \left(1+\frac{\rho(x,y)}{r}\right)^{-2s}  d\omega(y)
        \le   C V_{\calC}(x,r).
\end{equation}
\end{lemma}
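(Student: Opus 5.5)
The plan is to repeat the annular decomposition already used for Lemma \ref{l:2} and Lemma \ref{l:13}. The only geometric input is the doubling estimate \eqref{e:88} of Lemma \ref{l:14}, with exponent $N_\kappa=2+2\gamma_\kappa$. No finer structure of the dihedral chamber is needed.

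Fix $x\in\calC$, $r>0$, $a\ge1$, and split the region $\{y\in\calC:\rho(x,y)>ar\}$ into the chamber annuli
$$
A_k=\{y\in\calC:2^kar<\rho(x,y)\le2^{k+1}ar\},\qquad k\ge0.
$$
On $A_k$ we have $1+\rho(x,y)/r\ge 2^ka$, so the integrand is at most $(2^ka)^{-2s}$. Since $A_k\subset B_{\calC}(x,2^{k+1}ar)$ and $2^{k+1}a\ge1$, the doubling bound \eqref{e:88} with $R=2^{k+1}ar$ gives
$$
\omega(A_k)\le V_{\calC}(x,2^{k+1}ar)\le C(2^{k+1}a)^{N_\kappa}V_{\calC}(x,r).
$$
Multiplying, the contribution of $A_k$ is at most $C2^{N_\kappa}(2^ka)^{N_\kappa-2s}V_{\calC}(x,r)$.

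Summing over $k\ge0$ gives a geometric series $\sum_k2^{k(N_\kappa-2s)}$. It converges exactly because $2s>N_\kappa$, and this yields \eqref{e:90} with a constant depending only on $q$, $\kappa$ and $s$. For \eqref{e:91}, apply \eqref{e:90} with $a=1$. Then add the contribution of the inner chamber ball $B_{\calC}(x,r)$, where the integrand is at most $1$ and the measure is $V_{\calC}(x,r)$ by definition.

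There is essentially no obstacle here. The one point to check is that \eqref{e:88} is applied with the chamber volume $V_{\calC}$ and not the full-space ball volume, because the annuli are taken inside $\calC$. Lemma \ref{l:14} provides exactly this chamber version, so the argument closes without further work.
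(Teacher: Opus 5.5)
Your proposal is correct and follows the paper's proof essentially line for line. Both use the chamber annuli $A_k$, bound the integrand by $(2^ka)^{-2s}$ on each, control $\omega(A_k)$ by the chamber doubling estimate \eqref{e:88}, and sum the geometric series using $2s>N_\kappa$. Then \eqref{e:91} follows by taking $a=1$ and adding the inner ball $B_{\calC}(x,r)$.
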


\begin{proof}
Decompose the region $\rho(x,y)>ar$ into annuli
$$
    A_k=\{y\in\calC:2^k ar<\rho(x,y)\le 2^{k+1}ar\}, \qquad k=0,1,2,\ldots.
$$
On $A_k$,
$$
    \left(1+\frac{\rho(x,y)}{r}\right)^{-2s} \le C(2^ka)^{-2s}.
$$
By the doubling estimate \eqref{e:88},
$$
\omega(A_k)  \le  V_{\calC}(x,2^{k+1}ar)  \le   C(2^ka)^{N_\kappa}V_{\calC}(x,r).
$$
Therefore
$$
\int_{A_k}  \left(1+\frac{\rho(x,y)}{r}\right)^{-2s}  d\omega(y) \le   C(2^ka)^{N_\kappa-2s}V_{\calC}(x,r).
$$
Since $2s>N_\kappa$, the geometric series in $k$ converges. This proves \eqref{e:90}. 
Taking $a=1$, and adding the ball $B_{\calC}(x,r)$, gives \eqref{e:91}.
\end{proof}

The chamber value lift for $I_2(q)$ is the direct specialization of the finite-reflection lift from \cite{HLLSW-Calderon}.

Let
$$
        E_q=\bbC^{G_q}.
$$
For $f$ measurable on $\bbR^2$, define
$$
      U_qf(x)=\bigl(f(\sigma x)\bigr)_{\sigma\in G_q},  \qquad x\in\calC.
$$
This is the rank-two analogue of the value lift used in the product case.

\begin{lemma}
\label{l:16} 
For $1\le p<\infty$,
\begin{equation}\label{e:92}
        \norm{f}_{L^p(\bbR^2,d\omega)}^p   =  \int_{\calC}   \sum_{\sigma\in G_q}|f(\sigma x)|^p\,d\omega(x).
\end{equation}
Equivalently,
$$
     \norm{f}_{L^p(\bbR^2,d\omega)} =  \norm{U_qf}_{L^p(\calC,d\omega;\ell^p(G_q))}.
$$
For $p=\infty$,
$$
       \norm{f}_{L^\infty(\bbR^2)}=\norm{U_qf}_{L^\infty(\calC;\ell^\infty(G_q))}.
$$
\end{lemma}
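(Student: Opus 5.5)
The plan is to copy the argument of Lemma \ref{l:3}, replacing the sign group $\Sigma_N$ by $G_q$. The first ingredient is the decomposition of the plane into the $2q$ sectors $\sigma\calC$, $\sigma\in G_q$. These sets are pairwise disjoint, and their union is $\bbR^2$ minus the finitely many reflecting lines. Each reflecting line is a Lebesgue null set, and $d\omega=h_\kappa^2\,dx$ with $h_\kappa^2$ locally integrable (since $\kappa\ge0$), so each line is also $\omega$-null. Hence
$$
\int_{\bbR^2}|f|^p\,d\omega=\sum_{\sigma\in G_q}\int_{\sigma\calC}|f(z)|^p\,d\omega(z).
$$

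Next, in each summand I substitute $z=\sigma x$ with $x\in\calC$. Since $\sigma$ is orthogonal, $|\det\sigma|=1$, so $dz=dx$. The weight is invariant: $\sigma$ permutes $R$ and $\kappa$ is $G_q$-invariant. Concretely, $\ip{\sigma x}{\alpha}=\ip{x}{\sigma^{-1}\alpha}$, and $\alpha\mapsto\sigma^{-1}\alpha$ maps $R_+$ onto a set containing exactly one of $\pm\beta$ for each $\beta\in R_+$. Because only absolute values enter $h_\kappa$, this gives $h_\kappa(\sigma x)=h_\kappa(x)$. Therefore
$$
\int_{\sigma\calC}|f(z)|^p\,d\omega(z)=\int_{\calC}|f(\sigma x)|^p\,d\omega(x),
$$
and summing over $\sigma$ gives \eqref{e:92}. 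The equivalent formulation is just the definition of the $L^p(\calC,d\omega;\ell^p(G_q))$ norm of $U_qf$.

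For $p=\infty$, the ess sup over $\bbR^2$ equals the maximum over $\sigma$ of the ess sups over $\sigma\calC$, again because the lines are null. Each $\sigma$ maps $\omega$-null sets to $\omega$-null sets, since it preserves both Lebesgue measure and the weight. So the ess sup over $\sigma\calC$ equals the ess sup of $x\mapsto f(\sigma x)$ over $\calC$, which gives the identity for $\norm{U_qf}_{L^\infty(\calC;\ell^\infty(G_q))}$.

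The only point that needs care, rather than being a real obstacle, is the invariance $h_\kappa\circ\sigma=h_\kappa$, in particular the sign bookkeeping when $\sigma^{-1}$ sends positive roots to negative ones. This is handled by the absolute values in $h_\kappa$ together with the $G_q$-invariance of $\kappa$.
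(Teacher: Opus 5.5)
Your proposal is correct and follows essentially the same argument as the paper: you decompose $\bbR^2$ into the disjoint sectors $\sigma\calC$ up to the $\omega$-null reflecting lines, change variables by the orthogonal map $\sigma$ using the $G_q$-invariance of $d\omega$, sum over $\sigma\in G_q$, and argue the same way for $p=\infty$. Your explicit verification that $h_\kappa\circ\sigma=h_\kappa$, via the permutation of roots and the $G_q$-invariance of $\kappa$, fills in a step the paper only asserts.
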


\begin{proof}
The open chambers $\sigma\calC$, $\sigma\in G_q$, are pairwise disjoint, and their union is $\bbR^2$ up to the reflecting lines. 
The reflecting lines have $\omega$-measure zero. Hence
$$
     \int_{\bbR^2}|f(x)|^p\,d\omega(x)= \sum_{\sigma\in G_q}  \int_{\sigma\calC}|f(x)|^p\,d\omega(x).
$$
In the $\sigma$-th integral, put $x=\sigma z$. The map $z\mapsto \sigma z$ is orthogonal, and $d\omega$ is $G_q$-invariant. Therefore
$$
\int_{\sigma\calC}|f(x)|^p\,d\omega(x) =  \int_{\calC}|f(\sigma z)|^p\,d\omega(z).
$$
Summing over $\sigma\in G_q$ proves \eqref{e:92}. The $p=\infty$ statement is identical, with integrals replaced by essential suprema.
\end{proof}

\begin{remark}
No invariance of $f$ is used. The vector $U_qf(x)$ records all chamber values of $f$. 
A $G_q$-invariant function would correspond to the small subspace where all components of $U_qf(x)$ are equal. The chamber lift keeps the whole $L^p$ space.
\end{remark}

A dihedral multiplier is treated through its lifted matrix operator.

Let $m\in L^\infty(\bbR^2)$, and define the Dunkl multiplier $T_m$ by
$$
        \Fkap(T_mf)(\xi)=m(\xi)\Fkap f(\xi)
$$
on $L^2(\bbR^2,d\omega)$. Plancherel gives
$$
\norm{T_m}_{L^2(d\omega)\to L^2(d\omega)}
        \le \norm{m}_\infty.
$$
The lifted operator on the chamber is
$$
        \mathbb T_m^{(q)}  =  U_qT_mU_q^{-1}.
$$
It acts on $E_q$-valued functions on $\calC$.

We insert a dyadic partition in the Dunkl frequency variable. Let $\psi\in C_c^\infty((1/2,2))$ satisfy
$$ 
   \sum_{j\in\bbZ}\psi(2^{-j}r)=1,  \qquad r>0,
$$
and put $\psi_j(r)=\psi(2^{-j}r)$. Let $T_{m,j}$ be the Dunkl multiplier with symbol
$$
        \psi_j(|\xi|)m(\xi).
$$
Set
$$
\mathbb T_{m,j}^{(q)}  = U_qT_{m,j}U_q^{-1}.
$$
When $F$ is bounded and compactly supported and $x\notin\supp F$, write
$$
\mathbb T_{m,j}^{(q)}F(x) =   \int_{\calC}K_j^{(q)}(x,y)F(y)\,d\omega(y),
$$
where
$$
        K_j^{(q)}(x,y)\in \calL(E_q).
$$
The spatial scale is
$$
        r_j=2^{-j}.
$$

We now specialize the preceding chamber condition to the rank-two dihedral group $I_2(q)$. Let $\calC$ be a fixed open
chamber, let $E_q=\ell^2(I_2(q))$, and let $K_j^{(q)}(x,y)$ denote the kernel of the $j$-th dyadic piece of the lifted
operator on $L^2(\calC,d\omega;E_q)$. Thus the condition below is not an additional regularity condition; it is Definition
\ref{d:3} applied to $(\calC,\rho,d\omega)$ with matrix values in $\calL(E_q)$.

For $s>0$ and $0<\eta\le1$, we write $\mathbb T_m^{(q)}\in\CH^2_{s,\eta}(\calC)$ if the following four quantities are
finite uniformly in $j$:
\begin{align*}
\mathfrak S_y  &= \sup_{j\in\bbZ}\sup_{y\in\calC} V_{\calC}(y,r_j)^{1/2} \left( \int_{\calC} \left(1+\frac{\rho(x,y)}{r_j}\right)^{2s}
 \norm{K_j^{(q)}(x,y)}_{\calL(E_q)}^2 d\omega(x) \right)^{1/2},                                      \\   
  \mathfrak S_x  &=\sup_{j\in\bbZ}\sup_{x\in\calC} V_{\calC}(x,r_j)^{1/2}
\left( \int_{\calC} \left(1+\frac{\rho(x,y)}{r_j}\right)^{2s}  \norm{K_j^{(q)}(x,y)}_{\calL(E_q)}^2  d\omega(y)  \right)^{1/2},                                      \\
\mathfrak D_y   &=  \sup_{\substack{j\in\bbZ\\ y,y'\in\calC\\ 0<\rho(y,y')\le r_j}}  \left(\frac{r_j}{\rho(y,y')}\right)^\eta
V_{\calC}(y,r_j)^{1/2}  \left( \int_{\calC} \left(1+\frac{\rho(x,y)}{r_j}\right)^{2s}  \norm{K_j^{(q)}(x,y)-K_j^{(q)}(x,y')}_{\calL(E_q)}^2  d\omega(x)  \right)^{1/2},                                      \\
\mathfrak D_x  &=  \sup_{\substack{j\in\bbZ\\ x,x'\in\calC\\ 0<\rho(x,x')\le r_j}}  \left(\frac{r_j}{\rho(x,x')}\right)^\eta
V_{\calC}(x,r_j)^{1/2}  \left(  \int_{\calC} \left(1+\frac{\rho(x,y)}{r_j}\right)^{2s}  \norm{K_j^{(q)}(x,y)-K_j^{(q)}(x',y)}_{\calL(E_q)}^2  d\omega(y)  \right)^{1/2}.
\end{align*}
The norm is $\norm{\mathbb T_m^{(q)}}_{\CH^2_{s,\eta}(\calC)}=\mathfrak S_y+\mathfrak S_x+\mathfrak D_y+\mathfrak D_x$.
\begin{remark}
This is the same condition as Definition \ref{d:3}, written directly for operator-valued kernels on the dihedral chamber.
Since $E_q$ is finite dimensional, one may equivalently write the condition entry by entry after choosing a basis of $E_q$.
The operator-valued notation is shorter and avoids choosing a particular basis.
\end{remark}

We next verify that the dihedral chamber condition gives the standard H\"ormander kernel estimates.

\begin{proposition}
\label{p:7} 
Assume $s>N_\kappa/2$ and set $A=\norm{\mathbb T_m^{(q)}}_{\CH^2_{s,\eta}(\calC)}$. Then, for  every $y\in\calC$, $j\in\bbZ$, and $a\ge1$,
$$
\int_{\calC}\norm{K_j^{(q)}(x,y)}_{\calL(E_q)}d\omega(x)   \le C A
$$
and
$$
\int_{\rho(x,y)>ar_j}\norm{K_j^{(q)}(x,y)}_{\calL(E_q)}d\omega(x)   \le C A a^{N_\kappa/2-s}.
$$
The corresponding estimates with the variables interchanged also hold.
\end{proposition}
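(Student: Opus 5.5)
The argument is the operator-valued analogue of Lemma \ref{l:6}, with the tail estimate of Lemma \ref{l:15} supplying the volume bound on the dihedral chamber. Fix $j\in\bbZ$ and $y\in\calC$. Set $W_j(x,y)=(1+\rho(x,y)/r_j)^s$. For any measurable $E\subset\calC$, the Cauchy--Schwarz inequality applied to the scalar function $x\mapsto\norm{K_j^{(q)}(x,y)}_{\calL(E_q)}$ gives
$$
\int_E\norm{K_j^{(q)}(x,y)}_{\calL(E_q)}\,d\omega(x)\le\left(\int_E W_j(x,y)^2\norm{K_j^{(q)}(x,y)}_{\calL(E_q)}^2\,d\omega(x)\right)^{1/2}\left(\int_E W_j(x,y)^{-2}\,d\omega(x)\right)^{1/2}.
$$
The first factor is at most its value for $E=\calC$. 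By the definition of $\mathfrak S_y$, that value is bounded by $AV_{\calC}(y,r_j)^{-1/2}$.

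For the full integral, take $E=\calC$. The second factor is at most $CV_{\calC}(y,r_j)^{1/2}$ by \eqref{e:91} of Lemma \ref{l:15}, applied with center $y$, radius $r=r_j$, and the roles of the variables as stated there (the integrated variable is the non-center one, so this is exactly \eqref{e:91} with the letters relabeled). The volume factors cancel, and the bound $CA$ follows.

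For the tail, take $E=\{x\in\calC:\rho(x,y)>ar_j\}$. By \eqref{e:90}, the second factor is at most $CV_{\calC}(y,r_j)^{1/2}a^{(N_\kappa-2s)/2}$. Multiplying by $AV_{\calC}(y,r_j)^{-1/2}$ gives $CAa^{N_\kappa/2-s}$. Both applications of Lemma \ref{l:15} use $s>N_\kappa/2$.

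The interchanged estimates are proved in the same way. Fix $x$, integrate in $y$, use $\mathfrak S_x$ in place of $\mathfrak S_y$, and apply Lemma \ref{l:15} centered at $x$.

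The only point needing care is measurability of $x\mapsto\norm{K_j^{(q)}(x,y)}_{\calL(E_q)}$. Since $E_q$ is finite-dimensional, this norm is comparable to the maximum of the matrix entries, so measurability and the estimates transfer entrywise. Beyond that the proof is routine.
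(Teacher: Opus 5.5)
Your proof is correct and follows the paper's argument: Cauchy--Schwarz against the weight $(1+\rho(x,y)/r_j)^{s}$, with the first factor controlled by $\mathfrak S_y$ (respectively $\mathfrak S_x$ for the interchanged estimates). The second factor is controlled by \eqref{e:91} for the full integral and \eqref{e:90} for the tail.
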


\begin{proof}
By Cauchy--Schwarz,
\begin{align*}
        \int_{\calC} \norm{K_j^{(q)}(x,y)}_{\calL(E_q)}  d\omega(x)
        &\le  \left(  \int_{\calC}  \left(1+\frac{\rho(x,y)}{r_j}\right)^{2s} \norm{K_j^{(q)}(x,y)}_{\calL(E_q)}^2    d\omega(x)    \right)^{1/2}                                          \\
        &\quad\times \left( \int_{\calC}  \left(1+\frac{\rho(x,y)}{r_j}\right)^{-2s} d\omega(x) \right)^{1/2}.
\end{align*}
The first factor is at most $A V_{\calC}(y,r_j)^{-1/2}$ by the definition of $A$. 
The second factor is at most $C V_{\calC}(y,r_j)^{1/2}$ by Lemma \ref{l:15}. This gives the first estimate.

For the tail estimate, use the same argument over the set $\rho(x,y)>ar_j$. Lemma \ref{l:15} gives
$$
\left( \int_{\rho(x,y)>ar_j} \left(1+\frac{\rho(x,y)}{r_j}\right)^{-2s}  d\omega(x)   \right)^{1/2}
        \le  C V_{\calC}(y,r_j)^{1/2}a^{N_\kappa/2-s}.
$$
Multiplying by the same first factor gives the stated tail bound.
\end{proof}

\begin{proposition}
\label{p:8} 
Assume $s>N_\kappa/2$ and $0<\eta\le1$, and assume $\mathbb T_m^{(q)}\in\CH^2_{s,\eta}(\calC)$. 
Set $A=\norm{\mathbb T_m^{(q)}}_{\CH^2_{s,\eta}(\calC)}$ and let $K^{(q)}(x,y)=\sum_{j\in\bbZ}K_j^{(q)}(x,y)$ be the formal off-diagonal kernel of $\mathbb T_m^{(q)}$. 
Then $K^{(q)}$ satisfies the standard H\"ormander integral conditions
\begin{equation}\label{e:93}
        \sup_{y,y'\in\calC}  \int_{\rho(x,y)>2\rho(y,y')}  \norm{K^{(q)}(x,y)-K^{(q)}(x,y')}_{\calL(E_q)} d\omega(x)  <\infty
\end{equation}
and
\begin{equation}\label{e:94}
        \sup_{x,x'\in\calC}  \int_{\rho(x,y)>2\rho(x,x')} \norm{K^{(q)}(x,y)-K^{(q)}(x',y)}_{\calL(E_q)}  d\omega(y)   <\infty.
\end{equation}
The bounds are controlled by $C_{q,\kappa,s,\eta}A$.
\end{proposition}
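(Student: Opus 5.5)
The argument is the scale summation of Proposition \ref{p:2}, now run for operator-valued kernels on the dihedral chamber with the volume $V_{\calC}$. We prove \eqref{e:93}; the estimate \eqref{e:94} follows by interchanging the roles of the variables and using $\mathfrak S_x$, $\mathfrak D_x$ in place of $\mathfrak S_y$, $\mathfrak D_y$.

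Fix $y,y'\in\calC$ and set $\delta=\rho(y,y')$. If $\delta=0$ there is nothing to prove. Otherwise choose $j_0\in\bbZ$ with $2^{-j_0-1}<\delta\le 2^{-j_0}$. By the kernel representation clause contained in $\CH^2_{s,\eta}(\calC)$, we have $K^{(q)}=\sum_j K_j^{(q)}$ off the diagonal. Hence the integrand in \eqref{e:93} is bounded by $\sum_j\norm{K_j^{(q)}(x,y)-K_j^{(q)}(x,y')}_{\calL(E_q)}$. By Tonelli, it suffices to bound the sum over $j$ of the integrals over the region $\rho(x,y)>2\delta$.

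\emph{Scales $j\le j_0$.} Here $\delta\le r_j$, so $\mathfrak D_y$ applies. Cauchy--Schwarz with the weight $(1+\rho(x,y)/r_j)^{\pm s}$, exactly as in the proof of Proposition \ref{p:7}, gives
$$\int_{\calC}\norm{K_j^{(q)}(x,y)-K_j^{(q)}(x,y')}_{\calL(E_q)}\,d\omega(x)\le \mathfrak D_y\Bigl(\frac{\delta}{r_j}\Bigr)^\eta V_{\calC}(y,r_j)^{-1/2}\cdot C V_{\calC}(y,r_j)^{1/2}.$$
The second factor is controlled by \eqref{e:91}. Summing over $j\le j_0$ gives the geometric series $\sum_{k\ge0}2^{-k\eta}$, which is bounded by $C_\eta A$.

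\emph{Scales $j>j_0$.} Use the triangle inequality. On the region $\rho(x,y)>2\delta$ we also have $\rho(x,y')\ge\rho(x,y)-\delta>\delta$. Apply the tail bound of Proposition \ref{p:7} twice, centred at $y$ and at $y'$, with $a=\delta/r_j\ge 1/2$. The harmless case $a\in[1/2,1)$ is handled by the full-integral bound, and the constant $2$ is absorbed. This yields a bound $CA(r_j/\delta)^{s-N_\kappa/2}$. Since $s>N_\kappa/2$ and $r_j/\delta\le 2^{-(j-j_0)+1}$, the sum over $j>j_0$ is bounded by $CA$.

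Adding the two ranges proves \eqref{e:93} with constant $C_{q,\kappa,s,\eta}A$, uniformly in $y,y'$. The constants come only from Lemma \ref{l:15}, hence from the doubling estimate \eqref{e:88} of Lemma \ref{l:14}.

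The only point needing care is the passage from the dyadic bounds to the bound for $K^{(q)}$ itself: the interchange of the sum and the integral. This is legitimate because all terms are nonnegative and the summed bound is finite, so the series defining $K^{(q)}$ converges absolutely in $L^1$ of the relevant region. This is consistent with the representation clause.
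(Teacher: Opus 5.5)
Your proposal is correct and follows the paper's proof essentially step for step. It uses the same choice of $j_0$ with $2^{-j_0-1}<\delta\le2^{-j_0}$, the difference bound $\mathfrak D_y$ with Cauchy--Schwarz and \eqref{e:91} for $j\le j_0$, and the triangle inequality with the tail bound of Proposition \ref{p:7} at $y$ and $y'$ for $j>j_0$; two minor remarks: for $j>j_0$ one already has $r_j<\delta$, so $a\ge1$ automatically and the case $a\in[1/2,1)$ never arises, and the interchange of sum and integral is just Tonelli for nonnegative terms, which needs no finiteness input.
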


\begin{proof}
We prove \eqref{e:93}; the proof of \eqref{e:94} is the same.

Fix $y,y'\in\calC$, and set $\delta=\rho(y,y').$ If $\delta=0$, there is nothing to prove. Assume $\delta>0$. Choose $j_0\in\bbZ$ so that
$$
     2^{-j_0-1}<\delta\le 2^{-j_0}.
$$
Equivalently, $r_{j_0}\simeq \delta$. We split the dyadic sum into two parts:
$$
\sum_{j\in\bbZ}  =   \sum_{j\le j_0}  +    \sum_{j>j_0}.
$$
First consider $j\le j_0$. Then $r_j\ge c\delta$, so
$$
        \delta\le C r_j.
$$
The $y$-difference part of the dihedral $\CH^2_{s,\eta}$ norm, together with the same Cauchy--Schwarz argument used in  Proposition \ref{p:7}, gives
$$
      \int_{\calC} \norm{K_j^{(q)}(x,y)-K_j^{(q)}(x,y')}_{\calL(E_q)}  d\omega(x)   \le  C A\left(\frac{\delta}{r_j}\right)^\eta.
$$
Hence
$$
      \sum_{j\le j_0}   \int_{\rho(x,y)>2\delta}   \norm{K_j^{(q)}(x,y)-K_j^{(q)}(x,y')}_{\calL(E_q)} d\omega(x)
        \le  C A \sum_{j\le j_0}  \left(\frac{\delta}{r_j}\right)^\eta  \le C_\eta A.
$$
Now consider $j>j_0$. Then $r_j<c\delta$. On the region $\rho(x,y)>2\delta$, the triangle inequality gives
$$
      \rho(x,y')\ge \rho(x,y)-\rho(y,y')>\delta.
$$
Therefore
\begin{align*}
        &\int_{\rho(x,y)>2\delta}  \norm{K_j^{(q)}(x,y)-K_j^{(q)}(x,y')}_{\calL(E_q)}   d\omega(x)                                      \\
        &\le  \int_{\rho(x,y)>2\delta} \norm{K_j^{(q)}(x,y)}_{\calL(E_q)} d\omega(x)
            + \int_{\rho(x,y')>\delta}   \norm{K_j^{(q)}(x,y')}_{\calL(E_q)}  d\omega(x).
\end{align*}
By the tail estimate in Proposition \ref{p:7}, each term is bounded by
$$
        C A\left(\frac{r_j}{\delta}\right)^{s-N_\kappa/2}.
$$
Since $s-N_\kappa/2>0$,
$$
        \sum_{j>j_0}  \left(\frac{r_j}{\delta}\right)^{s-N_\kappa/2}   \le C.
$$
Thus the high-frequency contribution is also bounded by $CA$. Combining the two estimates proves \eqref{e:93}.
\end{proof}

We now prove the rank-two theorem.

\begin{proof}[Proof of Theorem \ref{t:4}]
Let $E_q=\bbC^{G_q}.$ By Lemma \ref{l:16}, $U_q$ identifies $L^p(\bbR^2,d\omega)$ with $L^p(\calC,d\omega;E_q)$, 
up to equivalence of finite-dimensional norms. 
Therefore it is enough to prove the corresponding estimate for the lifted operator $\mathbb T_m^{(q)}=U_qT_mU_q^{-1}$ on $L^p(\calC,d\omega;E_q)$.

First, $\mathbb T_m^{(q)}$ is bounded on $L^2(\calC;E_q)$. 
Indeed, $U_q$ is an $L^2$-isometry up to the fixed finite-dimensional normalization, and $T_m$ is bounded on $L^2(\bbR^2,d\omega)$ by the Dunkl Plancherel theorem:
$$
     \norm{T_m}_{L^2\to L^2}\le \norm{m}_\infty.
$$
Hence
$$
      \norm{\mathbb T_m^{(q)}}_{L^2(\calC;E_q)\to L^2(\calC;E_q)}  \le  C_q\norm{m}_\infty.
$$
Second, by condition the dyadic chamber kernels of $\mathbb T_m^{(q)}$ satisfy the rank-two $\CH^2_{s,\eta}$ condition for some $s>N_\kappa/2$ and $0<\eta\le1$. 
By Proposition \ref{p:8}, the full off-diagonal kernel of  $\mathbb T_m^{(q)}$ satisfies 
the standard H\"ormander integral conditions \eqref{e:93} and \eqref{e:94}. 
Since $(\calC,\rho,d\omega)$ is a space of homogeneous type by Lemma \ref{l:14}, the finite matrix Calder\'on--Zygmund theorem applies. 
More precisely, the proof of Theorem \ref{t:6} applies verbatim with $E$ replaced by $E_q$. We obtain
$$
     \norm{\mathbb T_m^{(q)}F}_{L^p(\calC;E_q)}  \le  C_{p,q,\kappa,s,\eta}   
     \bigl(  \norm{m}_\infty  + \norm{\mathbb T_m^{(q)}}_{\CH^2_{s,\eta}(\calC)}    \bigr) \norm{F}_{L^p(\calC;E_q)}
$$
for every $1<p<\infty$.

Finally take $F=U_qf$. Using Lemma \ref{l:16} again,
\begin{align*}
        \norm{T_mf}_{L^p(\bbR^2,d\omega)} &\simeq_{p,q}  \norm{U_qT_mf}_{L^p(\calC;E_q)}
        = \norm{\mathbb T_m^{(q)}U_qf}_{L^p(\calC;E_q)}        \\
        &\le   C_{p,q,\kappa,s,\eta}  \bigl( \norm{m}_\infty   +  \norm{\mathbb T_m^{(q)}}_{\CH^2_{s,\eta}(\calC)}  \bigr)  \norm{U_qf}_{L^p(\calC;E_q)}           \\
        &\simeq_{p,q}  C_{p,q,\kappa,s,\eta}  \bigl(    \norm{m}_\infty    +  \norm{\mathbb T_m^{(q)}}_{\CH^2_{s,\eta}(\calC)}  \bigr)   \norm{f}_{L^p(\bbR^2,d\omega)}.
\end{align*}
This proves the theorem.
\end{proof}

We end the dihedral discussion with the two Coxeter examples most relevant for comparison with higher-rank groups.

\begin{example}
The Weyl group of type $A_2$ is the dihedral group $I_2(3)$. It has six elements and three positive roots. 
The fundamental chamber has angle $\pi/3$. There is one orbit of roots. 
Thus the multiplicity is a single number $\kappa\ge0$, and $\gamma_\kappa=3\kappa$ and $N_\kappa=2+6\kappa$. 
The chamber lift is $U_3f(x)=\bigl(f(\sigma x)\bigr)_{\sigma\in I_2(3)}$, $x\in\calC_3$, so the lifted chamber matrix has size $6\times6$. 
Theorem \ref{t:4} says that if $\mathbb T_m^{(3)}=U_3T_mU_3^{-1}$ satisfies 
the chamber matrix condition $\mathbb T_m^{(3)}\in\CH^2_{s,\eta}(\calC_3)$ for some $s>N_\kappa/2=1+3\kappa$ and $0<\eta\le1$, then
$$
     \norm{T_mf}_{L^p(\bbR^2,d\omega)}  \le C_{p,\kappa,s,\eta}  \bigl(  \norm{m}_\infty  +  
     \norm{\mathbb T_m^{(3)}}_{\CH^2_{s,\eta}(\calC_3)}   \bigr) \norm{f}_{L^p(\bbR^2,d\omega)}
$$
for every $1<p<\infty$.
\end{example}

\begin{example}
The Weyl group of type $B_2$ is the dihedral group $I_2(4)$. It has eight elements. We may take the chamber $\calC_4=\{(x_1,x_2)\in\bbR^2:x_1>x_2>0\}$.
There are two root orbits. With multiplicities $\kappa_0,\kappa_1$, the weight may be written as $h_\kappa(x)=|x_1x_2|^{\kappa_0}|x_1^2-x_2^2|^{\kappa_1}$. 
Thus $\gamma_\kappa=2\kappa_0+2\kappa_1$ and $N_\kappa=2+4\kappa_0+4\kappa_1$. 
The chamber lift is $U_4f(x)=\bigl(f(\sigma x)\bigr)_{\sigma\in I_2(4)}$, $x\in\calC_4$, so the lifted chamber matrix has size $8\times8$. 
Theorem \ref{t:4} says that if $\mathbb T_m^{(4)}=U_4T_mU_4^{-1}$ satisfies $\mathbb T_m^{(4)}\in\CH^2_{s,\eta}(\calC_4)$ 
for some $s>N_\kappa/2=1+2\kappa_0+2\kappa_1$ and $0<\eta\le1$, then
$$
     \norm{T_mf}_{L^p(\bbR^2,d\omega)}  \le C_{p,\kappa_0,\kappa_1,s,\eta}   \bigl(  \norm{m}_\infty
     + \norm{\mathbb T_m^{(4)}}_{\CH^2_{s,\eta}(\calC_4)}  \bigr)   \norm{f}_{L^p(\bbR^2,d\omega)}
$$
for every $1<p<\infty$.
\end{example}

\begin{remark}
The examples above should be read in the same way as Theorem \ref{t:4}. 
They are not scalar symbol theorems for all non-radial $A_2$ or $B_2$ multipliers. 
They say that the chamber reduction gives the full $L^p$ conclusion once the lifted rank-two chamber matrix satisfies the natural $L^2$-H\"ormander condition. 
The remaining question is to prove that condition from concrete conditions on the scalar multiplier $m$.
\end{remark}

\section{Spectral multipliers}
\label{s:13}

This short section records how the Laplace-transform multipliers of Hassani and Sifi fit into the chamber language. 
The heat semigroup associated with the Dunkl Laplacian is part of the standard analytic theory of Dunkl operators
\cite{R1998}. We do not reprove their principal-value kernel theorem here; we quote it as an established spectral
multiplier result.

Let $\phi\in L^\infty(0,\infty)$, and define
\begin{equation}\label{e:95}
        m(\xi)=|\xi|^2\int_0^\infty e^{-t|\xi|^2}\phi(t)\,dt.
\end{equation}
Then $m$ is radial and hence sign-invariant:
$$
     m(\varepsilon R)=m(R),   \qquad \varepsilon\in\{\pm1\}^N.
$$
Consequently its Walsh pieces satisfy
$$
     m_0=m, \qquad  m_\eta=0\quad(\eta\ne0).
$$
Thus, in Walsh parity coordinates, the chamber matrix is diagonal:
\begin{equation}\label{e:96}
        \mathfrak m_{\alpha\beta}=0 \quad\text{if }\alpha\ne\beta.
\end{equation}
The Hassani--Sifi theorem is therefore the diagonal spectral part of the chamber representation: 
the matrix entries do not mix parity components, and the boundedness follows from the spectral multiplier theorem for symbols of the form
\eqref{e:95}.

\begin{proof}[Proof of Theorem \ref{t:5}]
This is the theorem of Hassani and Sifi for product Dunkl Laplacians. Their result gives boundedness on $L^p(\bbR^N,d\omega)$
for every $1<p<\infty$, and weak type $(1,1)$, for multipliers of the Laplace-transform form \eqref{e:95}.
The chamber discussion above only identifies this operator as a diagonal matrix in Walsh parity coordinates; it is not needed
for their proof.
\end{proof}


\section{Comparison and scope}
\label{s:14}

\subsection{Comparison with existing multiplier theorems}

We compare the present results with the multiplier theorems most closely related to them. 
Dziuba\'nski and Hejna prove a full $L^p$ theorem for general Dunkl multipliers under a smoothness condition of order $s>N_\kappa$ \cite{DH2019}. 
That theorem is full-space and non-radial, but its threshold is above the $L^2$ H\"ormander scale. 
Here the order is $s>N_\kappa/2$, while the condition is imposed on the lifted chamber matrix kernel. 
In the product group, the wall-separated Walsh--Sobolev theorem gives a scalar condition which implies this matrix condition.

Mukherjee and Thangavelu work at the natural order $s>N_\kappa/2$ \cite{MT2025}. 
For radial multipliers they obtain full $L^p$ boundedness; 
related radial multiplier theorems were obtained by Dai and Wang through transference ideas connected with Bonami--Clerc
\cite{DW2010,BC1973}; the translation and maximal-function results of Thangavelu--Xu \cite{TX2005} are a basic part of this background. 
For non-radial symbols, the known result is
$$
     T_m:L^p_{\rm rad}\to L^p, \qquad 2\le p<\infty.
$$
The radiality condition is used because the proof relies on boundedness of Dunkl translations on radial functions. 
The chamber lift retains all reflected values of $f$ and reduces the question to a finite matrix singular integral estimate on one chamber. 
Once the chamber matrix condition holds, the conclusion is
$$
    T_m:L^p\to L^p, \qquad 1<p<\infty.
$$
We do not use any implication from the modified H\"ormander condition in \cite{MT2025} to the chamber matrix condition. 
Such an implication would require a separate scalar theorem at the scale $N_\kappa/2$.

Hassani and Sifi treat Laplace-transform type spectral multipliers in the product reflection setting \cite{HS2012}.
Their theorem gives full $L^p$ boundedness and weak type $(1,1)$ for radial transform symbols of the form recorded in Theorem \ref{t:5}. 
Section \ref{s:13} quotes their theorem and rewrites it in chamber coordinates. 
In Walsh parity coordinates this spectral case is diagonal, whereas a scalar symbol with non-zero nontrivial Walsh pieces produces off-diagonal matrix entries. 
Deleaval and Kriegler prove vector-valued spectral multiplier theorems for the Dunkl Laplacian \cite{DK2017}; 
these are also spectral in $|\xi|^2$ and therefore belong to the diagonal case in chamber coordinates, not to the non-radial scalar theorem treated here.

The distinction is summarized in the following table.
\begin{center}
\small
\begin{tabular}{c|c|c|c}
\hline result & group & symbol or kernel condition & functions \\ \hline
Dziubanski--Hejna & general & non-radial, smoothness $s>N_\kappa$ & $L^p$ \\
Hassani--Sifi & $\bbZ_2^N$ & Laplace spectral & $L^p$ \\
Mukherjee--Thangavelu & general & non-radial modified H\"ormander & $L^p_{\rm rad}$ \\
Theorem \ref{t:1} & finite $G$ & lifted matrix $\CH^2_{s,\eta}$ & $L^p$ \\
Theorem \ref{t:4} & $I_2(q)$ & rank-two matrix $\CH^2_{s,\eta}$ & $L^p$ \\
Theorem \ref{t:2} & $\bbZ_2^N$ & product matrix $\CH^2_{s,\eta}$ & $L^p$ \\
Theorem \ref{t:3} & $\bbZ_2^N$ & Walsh--Sobolev, $\sigma>N_\kappa/2$ & $L^p$ \\
Corollary \ref{c:1} & $\bbZ_2^N$ & Walsh--Mihlin, $M>N_\kappa/2$ & $L^p$ \\ \hline
\end{tabular}
\end{center}

\subsection{Scope of the chamber reduction}

The chamber value lift exists for every finite reflection group. If $\calC$ is a Coxeter chamber for $G$, one sets
$$
     Uf(x)=\bigl(f(\sigma x)\bigr)_{\sigma\in G},    \qquad x\in\calC.
$$
This records all reflected values of $f$. Therefore the finite matrix Calder\'on--Zygmund part of the proof does not use the product structure: 
once the lifted matrix kernels satisfy a chamber $\CH^2_{s,\eta}$ condition with $s>N_\kappa/2$, Theorem \ref{t:1} gives the full $L^p$ conclusion.

The product-specific step is the scalar verification. For $A_1^N\simeq(\bbZ_2)^N$, 
every irreducible character is one-dimensional and the Dunkl kernel factors into one-dimensional even/odd Bessel components. 
This produces the Walsh pieces $m_{\alpha+\beta}$ and the operators between parity components
$$
    \mathbb T_{\alpha\beta}  =\calD_\alpha^{-1}M_{m_{\alpha+\beta}}\calD_\beta.
$$
Non-product Coxeter systems do not admit this product argument. In $A_2\simeq S_3$, the Weyl group has a two-dimensional  irreducible representation; 
in $B_N$, $N\ge3$, pair-interaction walls $x_i=x_j$ enter when the corresponding multiplicity is  non-zero; 
and in $A_{N-1}$ there is no coordinate sign parity. These facts do not disprove multiplier boundedness. 
They show that the scalar Walsh--Sobolev proof is tied to the product group.

Section \ref{s:12} spells out what remains true in rank two. For $I_2(q)$ the value lift gives a $2q$-component chamber operator on a wedge. 
If the lifted matrix kernels satisfy the natural chamber $L^2$ H\"ormander condition, the full $L^p$ theorem follows. 
This includes $A_2\simeq I_2(3)$ and $B_2\simeq I_2(4)$ as non-product examples under the chamber matrix kernel condition.

\subsection{Remaining scalar verification questions}
\label{s:15}

The chamber reduction replaces the use of non-radial Dunkl translations by a finite matrix kernel estimate on one chamber. 
In the product reflection case, the Walsh--Bessel decomposition gives a scalar theorem at the H\"ormander scale $N_\kappa/2$.
Thus the chamber condition is a genuine sufficient condition: it holds for wall-separated smooth homogeneous angular symbols and for their scale-invariant Sobolev closures.

Two questions remain. First, one should compare the Mukherjee--Thangavelu modified H\"ormander condition with the chamber matrix condition. 
If that scalar condition implies Definition \ref{d:3} in the product case, then the radial-function result of \cite{MT2025} would yield a full $L^p$ theorem by the chamber argument.

Second, one should develop scalar criteria beyond $\bbZ_2^N$. 
The abstract theorem and the dihedral model show that the Calder\'on--Zygmund part applies to any finite reflection group. 
What remains is a transform-side question: one needs a replacement for the product parity--Hankel decomposition which turns conditions on a scalar symbol $m$ into estimates for the
lifted chamber matrix kernels.

\bigskip

\noindent\textbf{Acknowledgments.} Ji Li is supported by Australian Research Council DP260100485. Liangchuan Wu is
supported by NNSF of China \#12201002.

\bigskip
\noindent\textbf{Conflict of interest statement.}
On behalf of all authors, the corresponding author states that there is no conflict of interest.

\bigskip
\noindent\textbf{Data availability statement.}
Our manuscript has no associated data.

\end{document}